\documentclass[10pt]{article}
\usepackage[a4paper]{geometry}
\usepackage{graphicx,version,color,epstopdf,enumitem,latexsym,amsmath,amsfonts,amsthm,setspace,mathtools}
\usepackage[title]{appendix}
\usepackage{caption}
\graphicspath{{figs/}}
\newtheorem{remark}{Remark}[section]
\newtheorem{theorem}{Theorem}[section]
\newtheorem{lemma}{Lemma}[section]

\newtheorem{corollary}{Corollary}[section]
\newtheorem{definition}{Definition}[section]
\newtheorem{example}{Example}[section]

\numberwithin{equation}{section}
\numberwithin{figure}{section}

\allowdisplaybreaks

\usepackage{hyperref}
\hypersetup{colorlinks=true,citecolor=blue,linkcolor=red,anchorcolor=blue}

\title{Optimal Root-Exponential Convergence of Lightning Plus Polynomial Approximation in Corner Domains\thanks{This work was supported by the National Natural Science Foundation of China (No. 12271528).}}

\author{Shuhuang Xiang\thanks{School of Mathematics and Statistics, INP-LAMA,Central South University, Changsha 410083,  P. R. China, xiangsh@csu.edu.cn} \and Jun Xiang\thanks{School of Mathematics and Statistics, Guizhou University, Guiyang 550025, P. R. China, gs.jxiang24@gzu.edu.cn} \and Shunfeng Yang\thanks{College of Science, Southwest Forestry University, Kunming 650224,  P. R. China, yangshunfeng@163.com} \and Yuee Zhong\thanks{School of Mathematics and Statistics, Central South University, Changsha 410083,  P. R. China, qqlyy0508@163.com}}

\begin{document}

\maketitle

\begin{abstract}
This paper presents a rigorous convergence analysis for the lightning plus polynomial approximation scheme, which employs rational approximations constructed with  preassigned tapered, exponentially clustered poles. This pole placement strategy was originally introduced by Trefethen and his collaborators for the resolution of corner singularities. Ample numerical results indicate that this scheme achieves root-exponential convergence, and in particular attains the same optimal convergence rate as the best rational approximation to $x^\alpha$ on $[0,1]$ established by Stahl.

\bigskip
In this work, we establish optimal root-exponential convergence for the class of prototype functions of the form $g(z)z^\alpha$ or $g(z)z^\alpha\log z$, where $g$ is analytic on a neighborhood of a sector domain $S_\beta$. These results confirm the validity of Conjectures 3.1 and 5.3 stated in [SIAM J. Numer. Anal., 61:2580-2600, 2023], and demonstrate that the choice $\sigma_{\mathrm{opt}} =\frac{\sqrt{2(2 - \beta)}\pi}{\sqrt{\alpha}}$ achieves the theoretically optimal convergence rate $\mathcal{O}\left(e^{-\sqrt{2(2 - \beta)N\alpha}\pi}\right)$.  Notably, for the specific case of  $\beta = 0$, the scheme recovers Stahl's optimal convergence rate  for  $x^\alpha$.
 Furthermore, working within the decomposition framework for corner domains proposed by Gopal and Trefethen, this paper provides a rigorous proof of optimal root-exponential convergence for lightning plus polynomial approximation problems on corner domains, and explicitly derives the optimal pole clustering parameter.
\end{abstract}

{\bf Keywords:}
   lightning plus polynomial scheme,  root-exponential convergence, exponentially clustering poles,
    Poisson's summation formula, Runge's approximation theorem, Cauchy's integral theorem, residual

\vspace{0.05in}

{\bf AMS subject classifications:}
 65E05, 65D15,  41A20,  30C10

\section{Introduction}
\label{sec:Int}
In the study of partial differential equations (PDEs) formulated in corner domains with continuous but piecewise analytic boundary conditions exhibiting jumps in the first derivative at the corner points, the solutions may develop isolated branch points at these locations. Extensive research on corner singularities has been conducted by Lewy \cite{1950Lewy}, Lehman \cite{Lehman1954DevelopmentsIT,Lehman1957DevelopmentOT}, and Wasow \cite{Wasow}. Conventional numerical methods face significant challenges in obtaining accurate solutions for such problems \cite{Gopal20192,Gopal2019}. Recently, efficient and robust numerical schemes based on rational functions have been developed to address branch singularities \cite{Brubeck2022,Gopal20192,Gopal2019,Herremans2023,Nakatsukasa2021,Trefethen2024,Trefethen2025,Trefethen202502,TNWNM2021,Xue2024,ZX2024}. Extensive numerical experiments demonstrate that these schemes achieve root-exponential convergence when applied to Laplace, Helmholtz \cite{Gopal20192,Gopal2019,Treweb,Trefethen2024}, and biharmonic equations (including Stokes flow scenarios) \cite{Brubeck2022,Xue2024}.

Gopal and Trefethen \cite{Gopal20192,Gopal2019} initially proposed a powerful and robust lightning plus polynomial (LP) scheme to approximate functions of the form $z^\alpha g(z)$  with corner singularities at $z=0$ within a sector domain defined as
\begin{equation*}
S_\beta=\big\{z: \, z=xe^{\pm \frac{\theta\pi}{2}i} \mbox{\, with\, $x\in [0,1]$ and $\theta\in [0,\beta]$}\big\}\quad \mbox{(see Fig. \ref{Vsector} (left))}
\end{equation*}
where $\beta\in [0,2)$. This scheme has also been successfully applied to solve the Laplace and Helmholtz equations on corner domains. By employing the rational function
\begin{equation}\label{eq:rat}
r_N(z)=\frac{p(z)}{q(z)}=\sum_{j=1}^{N_1}\frac{a_j}{z-p_j}+\sum_{j=0}^{N_2} b_jz^j:=r_{N_1}(z)+P_{N_2}(z),\, N=N_1+N_2
\end{equation}
and introducing uniform exponentially clustered poles
\begin{equation}\label{eq:uniform0}
p_j =-C\exp\left(-\sigma \frac {N_1-j}{\sqrt{N_1}}\right),\quad 1\leq j\leq N_1
\end{equation}
 with $C$ a positive constant and $\sigma>0$, they demonstrated root-exponential convergence $\mathcal{O}(e^{-C_0\sqrt{N}})$ \cite[Theorem 2.2]{Gopal2019} for $\beta\in (0,1)$, where $C_0>0$ is an unspecified constant. Their proof relies on Walsh's Hermite integral formula \cite[Theorem 2 of Chapter 8]{Walsh1965} by considering a rational interpolant with the poles \eqref{eq:uniform0} and the interpolation nodes $\{0,-p_1,\ldots,-p_{N_1-1}\}$.

\begin{figure}[htbp]
\centerline{\includegraphics[height=3.28cm,width=4cm]{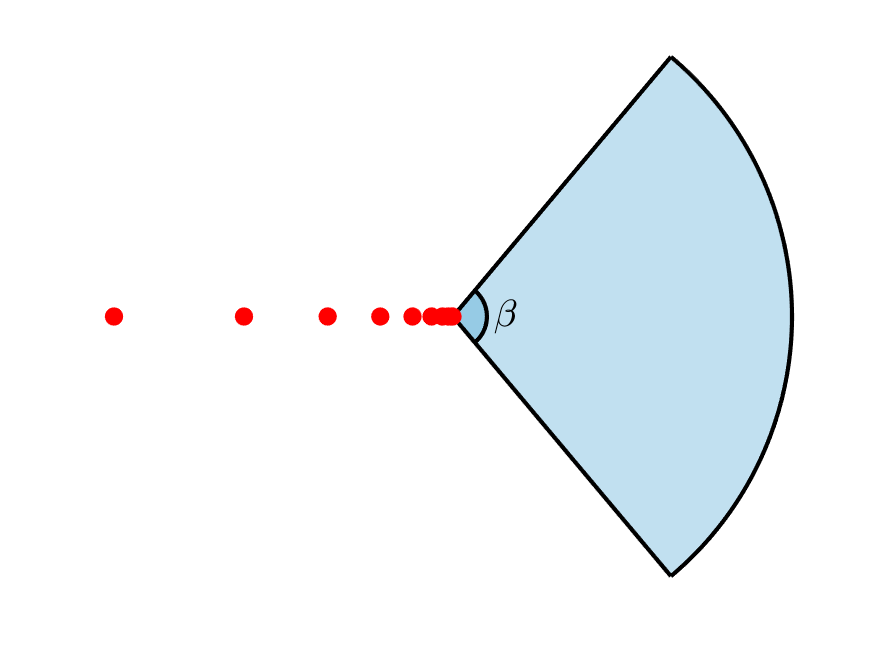}\hspace{2cm}\includegraphics[height=3.28cm,width=4cm]{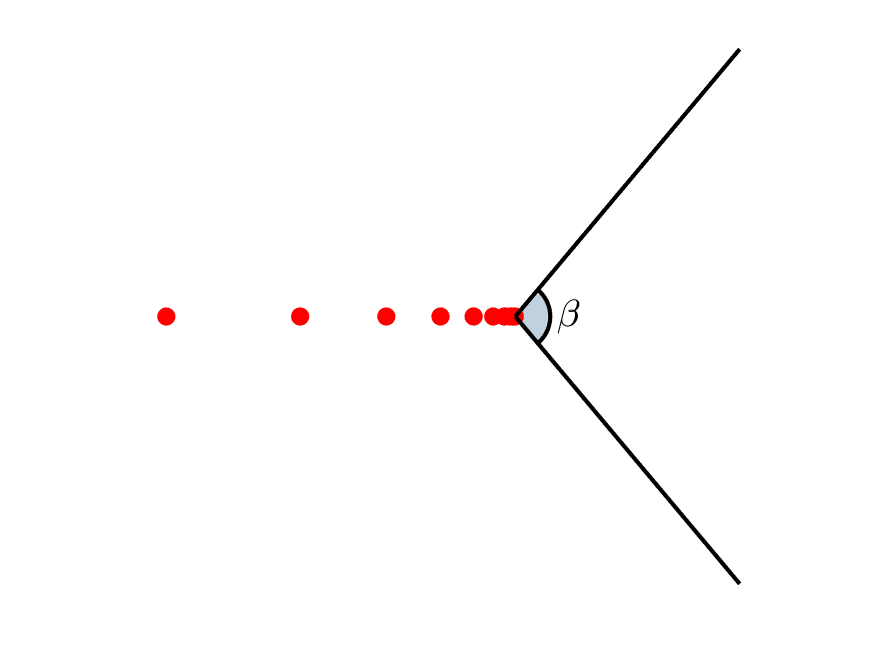}}
\caption{Sector domain (left):
$S_{\beta}=\left\{z: \, z=xe^{\pm \frac{\theta\pi}{2}i}\, x\in [0,1],\,\theta\in [0,\beta]\right\}$
 and
V-shaped domain (right): $V_\beta=\left\{z: \, z=xe^{\pm \frac{\beta\pi}{2}i},\, x\in [0,1]\right\}$ for fixed $\beta\in [0,2)$.
The red points illustrate the distributions of exponentially clustered poles.}
\label{Vsector}
\end{figure}

Another two specific pole-distribution models on the interval $[-C,0]$ were introduced  in Trefethen, Nakatsukasa, and Weideman  \cite{TNWNM2021}. One is a uniform exponential clustering of  $N_1$ poles
\begin{equation*}
\bar{p}_j =-C\exp\big(-\pi (j-1)/\sqrt{N_1}\big),\quad 1\leq j\leq N_1,
\end{equation*}
and the other is a tapered exponential clustering
\begin{equation*}
p_j =-C\exp\left(-\sqrt{2}\pi\big(\sqrt{N_1}-\sqrt{j}\big)/\sqrt{\alpha}\right),\quad 1\le j\le N_1.
\end{equation*}
When applied to the approximation of $x^\alpha$ ($0<\alpha<1$) on $[0,1]$, the LP approximations \eqref{eq:rat} with these two types of poles achieve exact approximation errors of order $\mathcal{O}(e^{-\pi\sqrt{\alpha N}})$
and $\mathcal{O}(e^{-\pi\sqrt{2\alpha N}})$, respectively.

It is well known from  Stahl \cite{Stahl2003} that the best rational approximant $R_{N}^*$ of degree $N$
 to $f(x)=x^\alpha$  has a convergence rate
\begin{align}\label{eq:newmann1}
  \lim_{N \to \infty} e^{2\pi \sqrt{\alpha N}} \max_{x\in [0,1]}|x^\alpha-R_N^*(x)| = 4^{1+\alpha} |\sin \pi \alpha|
\end{align}
for each $\alpha>0$. The existence of the limit in \eqref{eq:newmann1} and its equality to the right-hand side were originally conjectured by Varga and Carpenter \cite{Varga} based on high-precision numerical computations. The complete proof of \eqref{eq:newmann1} for all values of $\alpha>0$ not only resolved a long-standing research problem but also stands as one of the most significant achievements in approximation theory (see  Aptekarev, Nevai
and Totik \cite{Aptekarev}). However, finding  the best rational approximant $R_{N}^*$ is NP-hard in computational complexity \cite{Makila1999}.

To achieve the best convergence order \eqref{eq:newmann1}, Herremans, Huybrechs and Trefethen  \cite{Herremans2023}  proposed a novel LP approximation with a low-degree polynomial supported by the tapered exponentially clustering poles
\begin{equation}\label{eq:tapered2}
p_j =-C\exp(-\sigma(\sqrt{N_1}-\sqrt{j})),\quad 1\leq j\leq N_1,
\end{equation}
where $\sigma>0$. Specifically,
there exist coefficients $\{a_j\}_{j=1}^{N_1}$ and a polynomial $P_{N_2}$ with
$N_2 = \mathcal{O}(\sqrt{N_1})$,  such that the rational function $r_N(x)$ \eqref{eq:rat}, incorporating these tapered lightning poles with $\sigma = \frac{2\pi}{\sqrt{\alpha}}$, satisfies the following asymptotic behavior based on extensive numerical experiments
\begin{equation*}
|r_N(x)-x^\alpha|=\mathcal{O}(e^{-2\pi\sqrt{\alpha N}})
\end{equation*}
as $N \rightarrow \infty$. This result, initially conjectured in \cite{Herremans2023}, demonstrates a substantial improvement in both achievable accuracy and the optimal convergence rate, which is consistent with the best rational approximation \eqref{eq:newmann1} established by Stahl in \cite{Stahl2003}.

{\bf Conjecture 1.1} \cite[Conjecture 3.1]{Herremans2023}
 There exist coefficients $\{a_j\}_{j=1}^{N_1}$ and a polynomial $P_{N_2}$ with
$N_2 = \mathcal{O}(\sqrt{N_1})$, for which the LP approximation $r_N(x)$ \eqref{eq:rat} having
tapered lightning poles \eqref{eq:tapered2} with $\sigma = \frac{2\pi}{\sqrt{\alpha}}$
satisfies
\begin{equation*}
|r_N(x)-x^\alpha|=\mathcal{O}\big(e^{-2\pi\sqrt{\alpha N}}\big)
\end{equation*}
as $N \rightarrow \infty$, uniformly for $x\in [0,1]$.

\bigskip

Conjecture 1.1 has been settled in the special case $0 <\alpha< 1$ in \cite{XYW2024}. Moreover, the  choice $\sigma=\frac{2\pi}{\sqrt{\alpha}}$ achieves the fastest convergence rate among all $\sigma>0$. For more details, see Herremans et al. \cite{Herremans2023} and Xiang et al.  \cite{XYW2024}.

\bigskip
In the context of scientific computing on planar corner domains, solutions of PDEs may exhibit isolated branch singularities at the corner points \cite{Lehman1954DevelopmentsIT,Lehman1957DevelopmentOT,1950Lewy,Wasow}.
Herremans et al. \cite{Herremans2023} introduced an effective and reliable LP scheme based on tapered exponentially clustered poles of the form \eqref{eq:tapered2} for the V-shaped domain defined by
\begin{equation*}
V_\beta = \left\{ z : z = x e^{\pm \frac{\beta\pi}{2}i} \quad \text{with} \quad x \in [0,1] \right\} \quad \beta \in [0,2) \quad \mbox{(see Fig. \ref{Vsector} (right))}.
\end{equation*}
 The same work also puts forward the following conjecture for this domain, which has been empirically supported by extensive numerical experiments.

{\bf Conjecture 1.2} \cite[Conjecture 5.3]{Herremans2023}.
There exist coefficients $\{a_j\}_{j=1}^{N_1}$ and a polynomial $P_{N_2}$ with
$N_2 = \mathcal{O}(\sqrt{N_1})$, for which the LP approximation $r_N(z)$ \eqref{eq:rat} to $z^\alpha$ endowed with tapered lightning poles \eqref{eq:tapered2} parameterized by
\begin{align}\label{eq:optV}
\sigma=\frac{\pi\sqrt{2(2-\beta)}}{\sqrt{\alpha}}
\end{align}
satisfies
\begin{align}\label{eq:rateV}
|r_N(z)-z^\alpha|=\mathcal{O}(e^{-\pi\sqrt{2(2-\beta) N\alpha}})
\end{align}
uniformly for $z\in V_\beta=\{z=xe^{\pm \frac{\beta\pi}{2}i},\ x\in [0,1]\}$ for arbitrary fixed $\beta\in [0,2)$.


Conjecture 1.2 states that the LP approximation, based on the specific  $\sigma=\frac{\pi\sqrt{2(2-\beta)}}{\sqrt{\alpha}}$ for $z^\alpha$ on V-shaped domain $V_\beta$, exhibits a root-exponential convergence rate, which aligns with the best rational approximation in the sense of Stahl \cite{Stahl2003} in the special case $\beta=0$.

\bigskip

\bigskip
In the study of PDEs on domains with corners, the domain $\Omega$ may have either straight or curved sides. The interior angles at the vertices are $\varphi_1\pi, \cdots,\varphi_\ell\pi$ where each $\varphi_k\in (0,2)$. For domains with curved sides, these angles are defined by the tangent rays of the two sides $L_{k,1}$ and $L_{k,2}$
meeting at the common vertex $w_k$
(see Fig. \ref{tangent_covering_domain}).

\begin{figure}[htbp]
\centerline{\includegraphics[width=4.88cm]{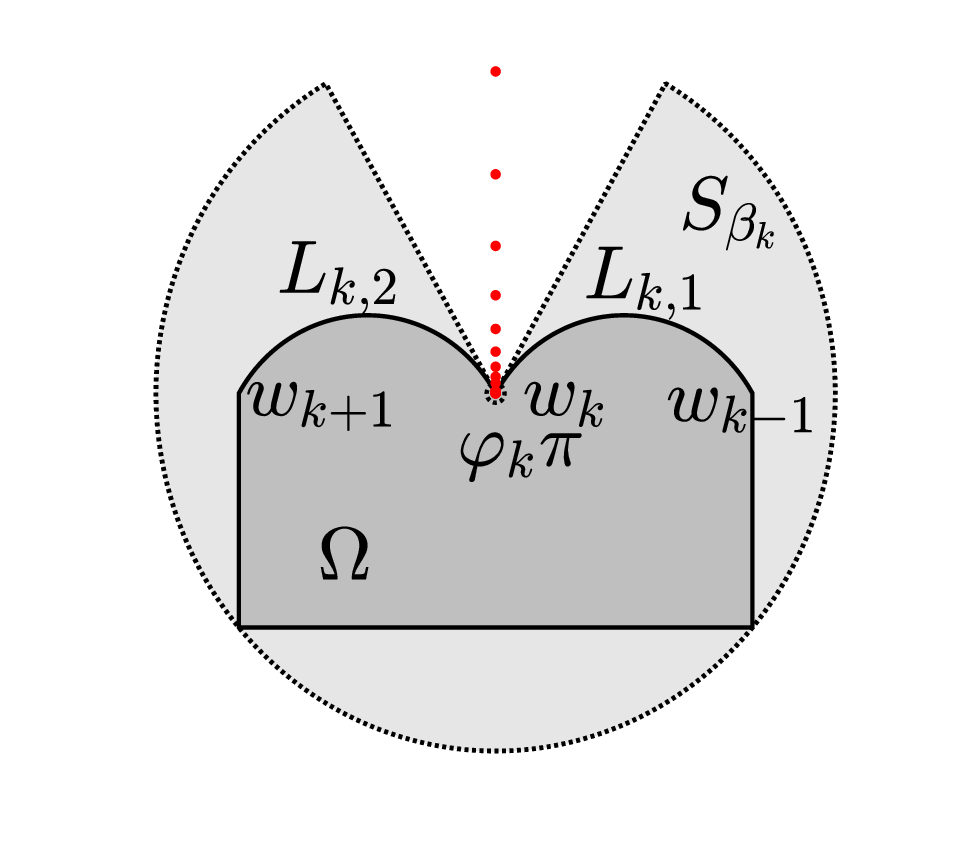}}
\caption{A curvy domain with an interior angle $\varphi_k\pi$, determined by the tangent rays extending from the common vertex. }
\label{tangent_covering_domain}
\end{figure}

In this paper, we provide a complete proof of Conjecture 1.2 for the sector domain $S_\beta$ by employing Poisson's summation formula \cite{Henrici,Stenger} in conjunction with Cauchy's integral theorem and Runge's approximation theorem. Furthermore, we demonstrate that selecting the poles in \eqref{eq:tapered2} with  $\sigma= \frac{\pi\sqrt{2(2-\beta)}}{\sqrt{\alpha}}$
 represents the optimal choice among all candidates in \eqref{eq:tapered2} for achieving the fastest convergence rate, and we therefore designate this value as $\sigma_{\rm opt}$.

 \begin{theorem}\label{mainthm}
There exist coefficients $\{a_j\}_{j=1}^{N_1}$ and a polynomial $P_{N_2}$ with
$N_2 = \mathcal{O}(\sqrt{N_1})$, for which the LP approximation $r_N(z)$ \eqref{eq:rat}  to $z^\alpha$ for $\alpha>0$ endowed with the
 lightning poles \eqref{eq:tapered2} parameterized by $\sigma>0$  satisfies as $N \rightarrow \infty$ that
\begin{equation}\label{eq: rate1}
|r_N(z)-z^\alpha|=\max\left\{\frac{|\sin(\alpha\pi)|}{\alpha\pi},\frac{|\sin(\alpha\pi)|}{(1-(\alpha))\pi}\right\}\cdot\left\{\begin{array}{ll}
\mathcal{O}(e^{-\sigma\alpha\sqrt{N}}),&\sigma\le \sigma_{\rm opt},\\
\mathcal{O}(e^{-\pi\eta\sqrt{2(2-\beta)N\alpha}}),&\sigma> \sigma_{\rm opt},
\end{array}\right.
\end{equation}
 uniformly for $z\in S_{\beta}$, where $\sigma_{\rm opt}=\frac{\pi\sqrt{2(2-\beta)}}{\sqrt{\alpha}}$, $\eta=\frac{\sigma_{\rm opt}}{\sigma}$ and $(\alpha)$ denotes the fractional part of $\alpha$, satisfying $0\le (\alpha)<1$.
\end{theorem}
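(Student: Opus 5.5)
Write $\alpha=m+(\alpha)$ with $m=\lfloor\alpha\rfloor$. Since $z^\alpha=z^m z^{(\alpha)}$ and $z^m$ can be absorbed into the polynomial part at the cost of $m$ extra degrees, it suffices to treat $0<(\alpha)<1$; the integer case $(\alpha)=0$ is trivial. For $0<\alpha<1$ the starting point is the Stieltjes-type representation
\begin{equation*}
z^\alpha=\frac{\sin(\alpha\pi)}{\pi}\int_0^\infty \frac{z\,t^{\alpha-1}}{t+z}\,dt ,
\end{equation*}
valid on $\mathbb{C}\setminus(-\infty,0]$ and hence on $S_\beta$. This explains the prefactors $|\sin(\alpha\pi)|/(\alpha\pi)$ and $|\sin(\alpha\pi)|/((1-(\alpha))\pi)$, which arise from the tail integrals $\int t^{\alpha-1}$ near $0$ and $\int t^{\alpha-2}$ near $\infty$. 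I will substitute $t=Ce^{-\sigma(\sqrt{N_1}-\sqrt{s})}$, i.e.\ the tapered map \eqref{eq:tapered2} with continuous index $s$. The integral over $[0,C]$ becomes an integral in $s$ over $(-\infty,N_1]$ after extending the map suitably; a further change $s=u^2$ gives a smooth integrand in $u$.

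The key step is to replace this integral by the trapezoidal sum over integer $s=j$, $1\le j\le N_1$. Each node contributes a term proportional to $z/(t_j+z)$, which is exactly a simple pole at $p_j=-t_j$ plus a constant. The constant is absorbed into $P_{N_2}$.

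The error splits into three pieces, each to be bounded uniformly for $z\in S_\beta$.

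\emph{Left truncation.} The part $t<t_1\approx Ce^{-\sigma(\sqrt{N_1}-1)}$ contributes $\mathcal{O}(t_1^\alpha)=\mathcal{O}(e^{-\sigma\alpha\sqrt{N}})$, with constant $|\sin\alpha\pi|/(\alpha\pi)$, since $|z/(t+z)|\le 1/\cos(\beta\pi/2)$-type bounds hold. Near $\beta\to2$ I would instead use $|t+z|\ge t\sin(\cdot)$, or pass the contour away.

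\emph{Large $t>C$.} The function $\int_C^\infty \frac{zt^{\alpha-1}}{t+z}dt$ is analytic in $|z|<C$. By Runge's theorem (really, by a Taylor/Chebyshev expansion on a disk containing $S_\beta$), it is approximated by a polynomial of degree $N_2=\mathcal{O}(\sqrt{N_1})$ with geometric error $\rho^{-N_2}$. Choosing $N_2=c\sqrt{N_1}$ with $c$ large enough makes this beneath the target rate. The constant $1/((1-\alpha)\pi)$ comes from $\int_C^\infty t^{\alpha-2}$.

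\emph{Discretization.} This is the main obstacle. I will use Poisson's summation formula, so the trapezoidal error equals the sum over $k\neq0$ of Fourier transforms $\hat F(2\pi k)$ of the integrand in $s$. I shift the contour in $s$ to $\operatorname{Im}s=\pm d$, which is allowed as long as no singularity is crossed. Singularities occur where $t(s)=-z$, i.e.\ $\sigma(\sqrt{N_1}-\sqrt{s})=\log(C/|z|)\pm i(\pi-\arg z)$. With $|\arg z|\le\beta\pi/2$, the admissible strip width in $\sqrt{s}$ is $(\pi-\beta\pi/2)/\sigma$. Translating to $s\approx N_1$ scale gives width $d\approx 2\sqrt{N_1}\,(2-\beta)\pi/(2\sigma)$, and hence a discretization error of $e^{-2\pi d}\approx e^{-(2-\beta)\pi^2\cdot 2\sqrt{N}/\sigma}$. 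The exact optimal constant needs care: I would do the Cauchy-integral estimate in the variable $u=\sqrt{s}$, balancing the decay $t^\alpha=e^{-\sigma\alpha(\sqrt{N_1}-u)}$ against the strip width, and optimize over the contour location. The expected outcome is an error of $\exp(-2\pi^2(2-\beta)\sqrt{N}/\sigma)=\exp(-\pi\eta\sqrt{2(2-\beta)N\alpha})$. Residues at the crossed poles $t=-z$ must be shown to cancel or be exponentially small; handling this uniformly as $z\to0$ (where $\log(C/|z|)$ is large) is the delicate point.

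Finally, I take the larger of the truncation rate $e^{-\sigma\alpha\sqrt N}$ and the discretization rate. These are equal exactly when $\sigma^2\alpha=2\pi^2(2-\beta)$, i.e.\ $\sigma=\sigma_{\rm opt}$. This yields the two cases in \eqref{eq: rate1}, using $N\sim N_1$ since $N_2=\mathcal{O}(\sqrt{N_1})$.
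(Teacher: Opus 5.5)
Your overall architecture is the paper's: the Stieltjes integral, the tapered substitution $t=Ce^{-\sigma(\sqrt{N_1}-\sqrt{s})}$, Poisson summation with a contour shift, Runge for the far part, and balancing $e^{-\sigma\alpha\sqrt N}$ against $e^{-2\pi^2(2-\beta)\sqrt N/\sigma}$. However, two steps fail as written.

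\textbf{The reduction to $0<(\alpha)<1$.} Writing $z^\alpha=z^m z^{(\alpha)}$ does not deliver the stated exponent when $\alpha>1$. Your argument applied to $z^{(\alpha)}$ gives a uniform error of order $e^{-\sigma(\alpha)\sqrt N}$, with an ``optimal'' $\sigma$ built from $(\alpha)$. Multiplying by $z^m$ does not improve this near $|z|=1$. There your left-truncation term satisfies $\int_0^{t_1}\frac{z\,t^{(\alpha)-1}}{t+z}\,dt\approx t_1^{(\alpha)}/(\alpha)$, which is far larger than $t_1^{\alpha}\approx e^{-\sigma\alpha\sqrt N}$; the left-endpoint quadrature contributions behave the same way. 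To retain the full $\alpha$ you must keep the weight $t^{\alpha-1}$ at $t=0$. The paper does this with the Lagrange-corrected representation of Lemma~\ref{complex_int_repre}, using $m=\lfloor\alpha\rfloor$ Chebyshev nodes $s_k\in(1,2)$. The integrand then behaves like $y^{\alpha-1}$ at $0$, which gives truncation error $\mathcal{O}(C^\alpha e^{-T})$ with $T=\sigma\alpha\sqrt{N_1}$. At $\infty$ it behaves like $y^{\alpha-m-2}$, which is where $\kappa=\alpha/(1-(\alpha))$ and the factor $|\sin(\alpha\pi)|/((1-(\alpha))\pi)$ come from. Your reduction could be rescued only by subtracting, in every error term, the polynomial part of $z^m\frac{z}{t+z}=\sum_{k=0}^{m}(-t)^k z^{m-k}+(-1)^{m+1}\frac{t^{m+1}}{t+z}$. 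That is exactly what the Lagrange correction does systematically.

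\textbf{The right endpoint of the quadrature.} You stop the quadrature at $t=C$ (i.e.\ $s=N_1$) and hand $\int_C^\infty$ to Runge. At $s=N_1$ the integrand equals $\frac{zC^\alpha}{C+z}\frac{\sigma}{2\sqrt{N_1}}$, which is not small. Hence the trapezoidal error on $(-\infty,N_1]$ is only algebraic in $N_1$. The jump at $N_1$ makes $\hat F(k)$ decay like $1/k$, and in your contour shift the vertical segments at $\Re s=N_1$ are of size $\mathcal{O}(N_1^{-1/2}/k)$ rather than exponentially small. The paper avoids this by continuing the rectangle rule to $N_t=\lceil(\kappa+1)^2N_1\rceil$ nodes, i.e.\ to $y=Ce^{\kappa T/\alpha}$ in its notation, where the integrand has decayed to $\mathcal{O}(e^{-T})$. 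It is the \emph{discrete} tail $r_2^{(l)}$, with poles $p_j<-C$ for $j>N_1$, that is analytic and uniformly bounded on a neighbourhood $\Omega_\rho$ of $S_\beta$. That tail is then replaced by a polynomial of degree $\mathcal{O}(\sqrt{N_1})$.

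\textbf{The residues at $t=-z$.} The step you flag as delicate is not resolved by cancellation: these residues are precisely what produce the second rate. The paper bounds their sum over $n$ by a multiple of $x^\alpha e^{-\eta^2(T+\alpha\log(x/C))}$ and maximizes over $x$ (see \eqref{eq:balance}). This gives $\mathcal{O}(e^{-T})$ for $\eta\ge 1$ and $\mathcal{O}(e^{-\eta^2T})$ for $\eta<1$. For small $|z|$, where the pole's real part is at most $\sqrt h+\alpha$, the paper moves the vertical segment to $\sqrt{\lambda_0h}$. Then no $z$-pole is enclosed, and only a segment contribution of size $\mathcal{O}(T^le^{-T})$ remains. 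Your proposal leaves this step open.
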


From Theorem \ref{mainthm} together with
\begin{align}\label{eq:31}
 \frac{|\sin(\alpha\pi)|}{\alpha\pi}\le 1,\quad \frac{|\sin(\alpha\pi)|}{(1-(\alpha))\pi}=\frac{|\sin((1-(\alpha))\pi)|}{(1-(\alpha))\pi}\le 1
\end{align}
for all $\alpha>0$,
we observe that for a specific choice of the parameter $\sigma_{\rm opt}$ in the tapered exponentially clustered poles \eqref{eq:tapered2}, the lightning approximation coupled with a low-degree polynomial basis achieves the optimal convergence rate $\mathcal{O}(e^{-\pi\sqrt{2(2-\beta)N\alpha}})$
uniformly for $z\in S_\beta$ when $\alpha$ is not a positive integer. In particular:
\begin{itemize}
\item Conjecture 1.1 holds for the special case $\sigma_{\rm opt}=\frac{\pi\sqrt{2(2-\beta)}}{\sqrt{\alpha}}$ with $\beta=0$, where $r_N(x)$ attains the fastest convergence rate among all $\sigma>0$.

\item Conjecture 1.2 holds for the special case $\sigma_{\rm opt}$ and $z\in V_\beta$, where $r_N(z)$ also achieves the fastest rate  among all $\sigma>0$.
\end{itemize}

\bigskip
The numerical results presented in Fig. \ref{rateex} confirm that the convergence rate of the LP approximation given by \eqref{eq:rat}, as rigorously established in Theorem \ref{mainthm}, is achieved for all  values of $\sigma$. Meanwhile, Fig. \ref{poled} visualizes the radial distances of the exponentially clustered poles from the singularity at $z=0$.

It is noteworthy that when the clustering parameters in \eqref{eq:tapered2} satisfy $\sigma_1\cdot \sigma_2=\sigma_{\rm opt}^2$,
the corresponding LP approximation \eqref{eq:rat} achieves the same order of convergence even for markedly different pole distributions (see Fig. \ref{rateex}); this follows by writing $e^{-\sigma_1\alpha\sqrt{N}}=e^{-\pi\eta^{-1}\sqrt{2(2-\beta)N\alpha}}$ for $\eta\ge 1$ (i.e., $\sigma_1\le \sigma_{\rm opt}$).

\begin{figure}[htbp]
\centerline{\includegraphics[height=6cm,width=14cm]{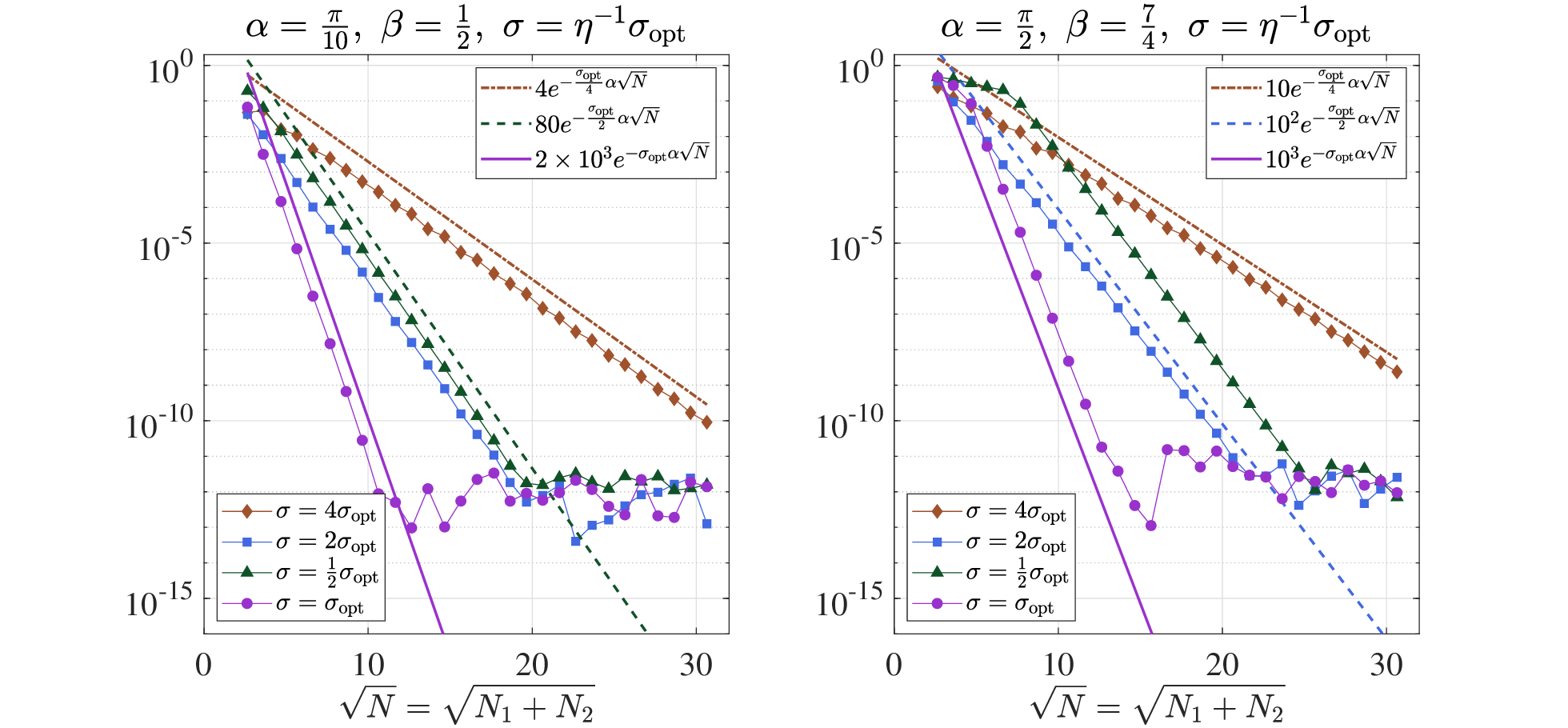}}
\caption{$\|z^\alpha-r_{N,\sigma}(z)\|_{C(S_\beta)}$  with various values of $\sigma$, fixed $\alpha=\frac{\pi}{10},\ \beta=\frac{1}{2}$ (left) and $\alpha=\frac{\pi}{2},\ \beta=\frac{7}{4}$ (right), respectively: $N_1=(2:1:30)^2$ and $N_2={\rm ceil}(1.3\sqrt{N_1})$.}
\label{rateex}
\end{figure}

\begin{figure}[htbp]
\centerline{\includegraphics[height=5.cm,width=9.8cm]{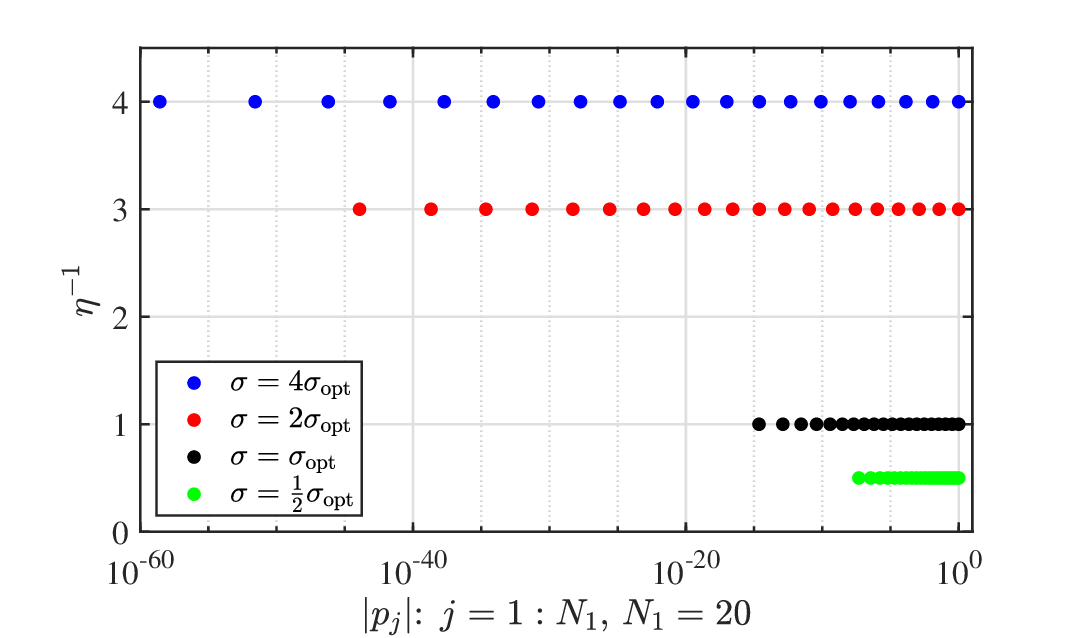}}
\caption{The distance of the poles \eqref{eq:tapered2} from the singularity $z=0$  with various values of $\sigma=\eta^{-1} \sigma_{\rm opt}$ with $\eta^{-1}=4,2,1,0.5$ and fixed $\alpha=\frac{\pi}{10}$ and $\beta=\frac{1}{2}$: $\sigma_{\rm opt}=\pi\sqrt{2(2-\beta)}/\sqrt{\alpha}$.}
\label{poled}
\end{figure}

Furthermore, an analogous result holds for  $z^\alpha\log z$ in $S_{\beta}$.

\begin{theorem}\label{mainthm2}
There exist coefficients $\{\widetilde{a}_j\}_{j=1}^{N_1}$ and a polynomial $\widetilde{P}_{N_2}$ with
$N_2 = \mathcal{O}(\sqrt{N_1})$, for which the LP approximation $\widetilde{r}_N(z)$ \eqref{eq:rat}  to $z^\alpha\log z$ with
tapered lightning poles \eqref{eq:tapered2} parameterized by $\sigma>0$  satisfies as $N \rightarrow \infty$  that

\begin{equation}\label{eq: rate2}
|\widetilde{r}_N(z)-z^\alpha\log{z}|=\left\{\begin{array}{ll}
\mathcal{O}(\sqrt{N}e^{-\sigma\alpha\sqrt{N}}),&\sigma\le \sigma_{\rm opt},\\
\mathcal{O}(e^{-\pi\eta\sqrt{2(2-\beta)N\alpha}}),&\sigma> \sigma_{\rm opt},
\end{array}\right.
\end{equation}
uniformly for $z\in S_{\beta}$. In particular, for $\alpha$ being a positive integer, it holds
\begin{equation}\label{eq: rate22}
|\widetilde{r}_N(z)-z^\alpha\log{z}|=\left\{\begin{array}{ll}
\mathcal{O}(e^{-\sigma\alpha\sqrt{N}}),&\sigma\le \sigma_{\rm opt},\\
\mathcal{O}(e^{-\pi\eta\sqrt{2(2-\beta)N\alpha}}),&\sigma> \sigma_{\rm opt}.
\end{array}\right.
\end{equation}
\end{theorem}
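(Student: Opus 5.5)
The plan is to reduce Theorem \ref{mainthm2} to the machinery behind Theorem \ref{mainthm} by using an integral representation of $z^\alpha\log z$ that parallels the one for $z^\alpha$. For $0<\alpha<1$, and for $z$ off the negative real axis, we have the Stieltjes-type representation
\begin{equation*}
z^\alpha=\frac{\sin(\alpha\pi)}{\pi}\int_0^\infty \frac{t^{\alpha-1}z}{t+z}\,dt .
\end{equation*}
Differentiating with respect to $\alpha$ gives
\begin{equation*}
z^\alpha\log z=\frac{\cos(\alpha\pi)}{1}\int_0^\infty \frac{t^{\alpha-1}z}{t+z}\,dt+\frac{\sin(\alpha\pi)}{\pi}\int_0^\infty \frac{t^{\alpha-1}\log t\; z}{t+z}\,dt .
\end{equation*}
This expresses the target as a Cauchy/Stieltjes transform whose density carries an extra factor $\log t$.

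For general $\alpha>0$, write $\alpha=m+(\alpha)$. We then write $z^\alpha\log z=z^m\cdot z^{(\alpha)}\log z$ and absorb $z^m$, exactly as in the proof of Theorem \ref{mainthm}. If $\alpha$ is a positive integer, $z^m\log z$ is handled directly by $\log z=\int_0^\infty\big(\frac{1}{1+t}-\frac{1}{t+z}\big)\,dt$ times $z^m$. That density has no $t^{\alpha-1}\log t$ blow-up, which is what will yield the cleaner estimate \eqref{eq: rate22}.

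Next, following the Theorem \ref{mainthm} argument, we substitute $t=Ce^{-\sigma(\sqrt{N_1}-\sqrt{s})}$, or the equivalent change of variables making the tapered poles \eqref{eq:tapered2} the integer nodes $s=j$. We split the integral into three pieces:
\begin{itemize}
\item[(i)] the part near $t=0$ (beyond $j=1$), of size $\int_0^{t_1}t^{\alpha-1}|\log t|\,dt$;
\item[(ii)] the part $t>C$, which is analytic in $z$ on a neighborhood of $S_\beta$ and is absorbed by a polynomial of degree $N_2=\mathcal{O}(\sqrt{N_1})$ via Runge's theorem, with geometric error;
\item[(iii)] the middle range, discretized by the trapezoidal rule at the nodes $s=j$.
\end{itemize}
The trapezoidal sum is exactly $\sum a_j/(z-p_j)$, which fixes $\widetilde a_j$.

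For (i), since $t_1\approx Ce^{-\sigma\sqrt{N_1}}$, this piece contributes $\mathcal{O}(t_1^\alpha|\log t_1|)=\mathcal{O}(\sqrt{N}e^{-\sigma\alpha\sqrt N})$. The $\sqrt N$ loss comes from the $\log t$ factor, and it disappears in the integer case, where the logarithm multiplies no singular weight.

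For (iii), we apply Poisson's summation formula. The discretization error is governed by the analytic strip width of the transformed integrand in $s$. The poles of $1/(t+z)$ sit at $\arg t=\pi-\arg z$, with $|\arg z|\le\beta\pi/2$, so the available angle is $(2-\beta)\pi/2$. In the variable $u=\sqrt s$, the map $s\mapsto \sigma\sqrt s$ converts this into an error of order $e^{-\pi^2(2-\beta)\cdot 2\sqrt{N}/\sigma}\cdot(\cdots)$, that is $\mathcal{O}(e^{-\pi\eta\sqrt{2(2-\beta)N\alpha}})$ after balancing, as in Theorem \ref{mainthm}. The extra $\log t=\log C-\sigma(\sqrt{N_1}-\sqrt s)$ is analytic in the strip and grows only polynomially, so it costs at most a factor $\sqrt N$. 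That factor can be absorbed because, for $\sigma>\sigma_{\rm opt}$, the analysis in Theorem \ref{mainthm} already has slack in lower-order terms; if not, we shrink the strip by $\mathcal{O}(N^{-1/2})$. The balance $\sigma\alpha=\pi\eta\sqrt{2(2-\beta)\alpha}$ at $\sigma=\sigma_{\rm opt}$ then reproduces the two regimes in \eqref{eq: rate2}.

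The main obstacle is the Poisson-summation step. We must bound the Fourier tail of $t(s)^{\alpha-1}\log t(s)\,t'(s)\,z/(t(s)+z)$ uniformly for $z\in S_\beta$, including $z$ near $0$ where the pole approaches the contour, while showing that the $\log t$ factor does not degrade the exponent. I would first copy the contour-shift estimate from Theorem \ref{mainthm} verbatim and then track the $\log$ factor as a multiplicative $\mathcal{O}(\sqrt N)$ bound on the shifted contour.
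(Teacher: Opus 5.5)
Your overall architecture matches the paper's: a Stieltjes-type representation, the substitution that turns the tapered poles into equispaced nodes, truncation, Poisson summation with a contour shift, and Runge for the far poles. The reduction of general $\alpha$ to its fractional part, however, is a genuine gap. Multiplying an LP approximant of $z^{(\alpha)}\log z$ by $z^m$ preserves the LP form but not the rate. The part of the integral below the smallest pole, $\int_0^{t_1}t^{(\alpha)-1}\log t\,\frac{z}{t+z}\,dt$, has size $\asymp t_1^{(\alpha)}|\log t_1|$ for $z$ bounded away from $0$, and the factor $z^m$ does not reduce it there. So your argument only yields $e^{-\sigma(\alpha)\sqrt N}$, with the threshold computed from $(\alpha)$, not $e^{-\sigma\alpha\sqrt N}$. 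Your bound $\mathcal{O}(t_1^\alpha|\log t_1|)$ in (i) presupposes a density with weight $t^{\alpha-1}$, but $\int_0^\infty t^{\alpha-1}\frac{z}{t+z}\,dt$ diverges for $\alpha\ge 1$.

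The integer case fails the same way. The density $\frac{1}{1+t}-\frac{1}{t+z}$ of $\log z$ is $\mathcal{O}(1)$ at $t=0$, so $z^m$ times its discretization has error of order $t_1$ for $z$ away from $0$ and $1$. That is $e^{-\sigma\sqrt N}$ instead of $e^{-\sigma m\sqrt N}$ when $m\ge 2$.

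The missing ingredient is Lemma~\ref{complex_int_repre}, which the paper also uses for Theorem~\ref{mainthm} rather than multiplying by $z^m$. Subtracting the interpolant $z\mathcal{L}[z^{\alpha-1}\log z;s_1,\ldots,s_m]$ at $m=\lceil\alpha\rceil$ nodes $s_k\in(1,2)$ gives a convergent representation that keeps the full weight $y^{\alpha-1}$; the factor $\prod_k(z-s_k)/(y+s_k)$ supplies the decay at infinity. After $y=Ce^{t/\alpha}$ the tail is $\mathcal{O}(C^\alpha e^{-T})$ with $T=\sigma\alpha\sqrt{N_1}$, and the interpolant is a degree-$m$ polynomial absorbed into $\widetilde P_{N_2}$. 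In that representation the integer case is immediate: $\sin(\alpha\pi)=0$ kills the $\log y$-weighted integral, and only the $l=0$ integral remains, which carries no factor $T$.

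Two further steps would not go through as written. First, for $\sigma>\sigma_{\rm opt}$ the $\sqrt N$ must be removed from the dominant term, which is the residue sum. ``Slack'' only handles the terms of size $Te^{-T}=\mathcal{O}(e^{-\eta^2T})$ (truncation, shifted horizontal lines, vertical segment). Shrinking the strip can only weaken the exponential factor and leaves the $\log$ factor untouched. The mechanism in Lemma~\ref{residual} is different. At the pole $u$ of $1/(Ce^{(u-T)/\alpha}+z)$, with $z=xe^{\pm i\theta\pi/2}$, the log factor is $u-T=\alpha\log(x/C)$ plus a bounded imaginary part. The summed residue bound carries $x^\alpha e^{-\eta^2(T+\alpha\log(x/C))}=C^{\alpha\eta^2}x^{\alpha(1-\eta^2)}e^{-\eta^2T}$. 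Since $x^{\alpha(1-\eta^2)}|\log(x/C)|$ is bounded on $(0,C]$ when $\eta<1$, the residues are $\mathcal{O}(e^{-\eta^2T})$ with no factor $T$.

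Second, suppose (ii) is the exact integral over $t>C$ while the rule uses only the nodes $j\le N_1$. Then $t=C$ is an endpoint where the integrand in the node variable has size about $N_1^{-1/2}$ and varies on the scale $\sqrt{N_1}$. The endpoint (Euler--Maclaurin) error is therefore only algebraically small, and Poisson summation is unavailable. The paper instead continues the rectangle rule to $N_t=\lceil(\kappa+1)^2N_1\rceil$ nodes, where the integrand is $\mathcal{O}(e^{-T})$. It then replaces the resulting rational part with poles $p_j<-C$, which is analytic and $\mathcal{O}(T)$ on a fixed neighbourhood $\Omega_\rho$ of $S_\beta$, by a polynomial of degree $\mathcal{O}(\sqrt{N_1})$.
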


\begin{figure}[htbp]
\centerline{\includegraphics[height=6cm,width=14cm]{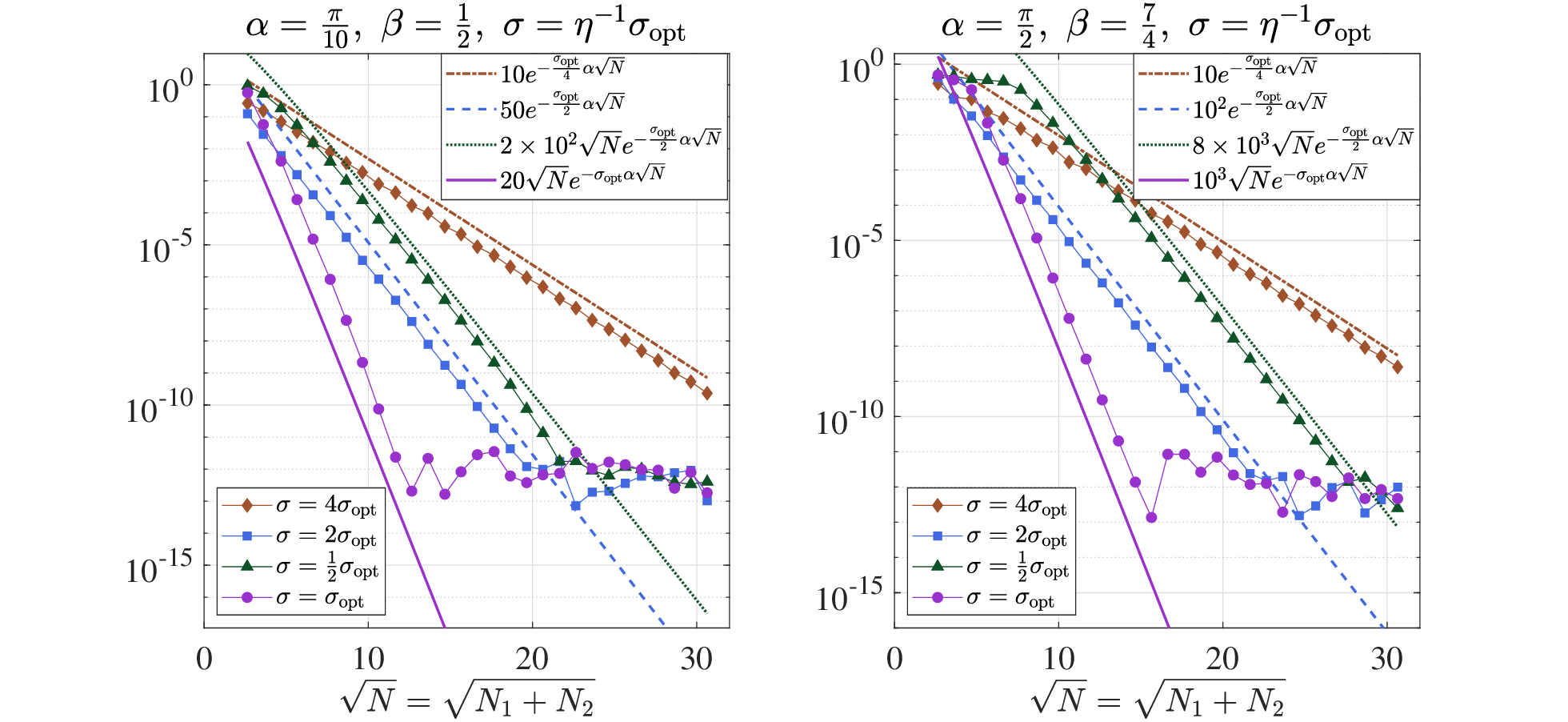}}
\caption{$\|z^\alpha\log z-r_{N,\sigma}(z)\|_{C(S_\beta)}$  with various values of $\sigma$, fixed $\alpha=\frac{\pi}{10},\ \beta=\frac{1}{2}$ (left) and $\alpha=\frac{\pi}{2},\ \beta=\frac{7}{4}$ (right), respectively: $N_1=(2:1:30)^2$ and $N_2={\rm ceil}(1.3\sqrt{N_1})$.}
\label{ratelogex}
\end{figure}

Fig. \ref{ratelogex} illustrates that the convergence rate of the
LP approximation in Theorem \ref{mainthm2} is also attainable for each $\sigma$. In addition, if the clustered parameters in \eqref{eq:tapered2} satisfy  $\sigma_1\cdot \sigma_2=\sigma_{\rm opt}^2$, then the corresponding LP approximation  attains a similar order of convergence, differing by a factor of $\sqrt{N}$ (see Fig. \ref{ratelogex2}).   Theorem \ref{mainthm} and Theorem \ref{mainthm2} can be readily extended to $g(z)z^{\alpha}$ and $g(z)z^{\alpha}\log{z}$ for $g(z)$  analytic on $S_\beta$ by applying Runge's approximation theorem \cite{Gaier1987}.

\begin{figure}[htbp]
\centerline{\includegraphics[height=6cm,width=7.5cm]{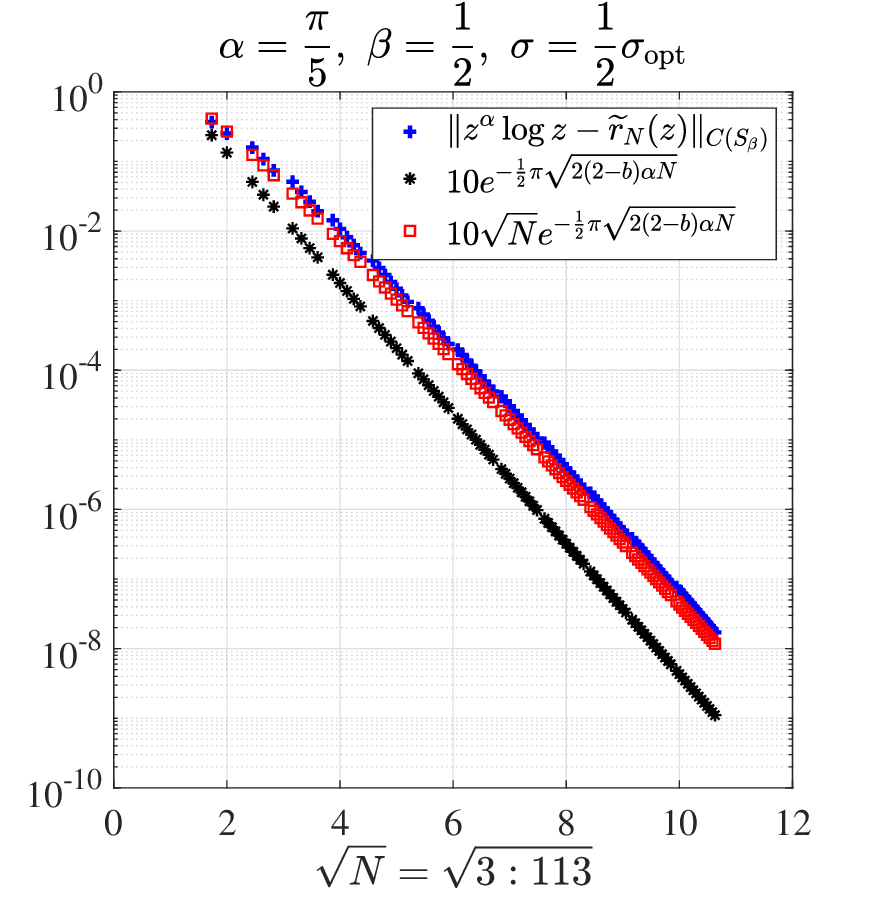}\includegraphics[height=6cm,width=7.5cm]{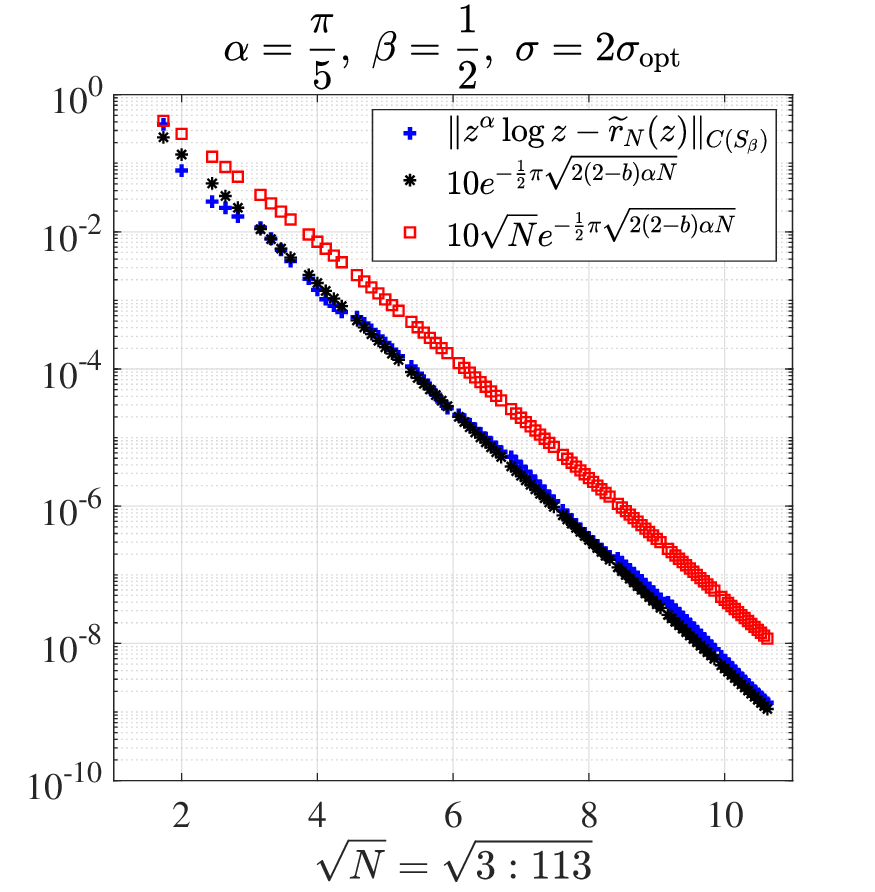}}
\caption{$\|z^\alpha\log z-r_{N,\sigma}(z)\|_{C(S_\beta)}$  with various values of $\sigma$, fixed $\alpha=\frac{\pi}{5}$, $\beta=\frac{1}{2}$ and $\sigma_{\rm opt}= \frac{\pi\sqrt{2(2-\beta)}}{\sqrt{\alpha}}$.}
\label{ratelogex2}
\end{figure}

\bigskip
To investigate the construction of multivariate rational approximations for functions with branch singularities, Boull\'{e}, Herremans and Huybrechs  \cite{boulle2023multivariate} introduced tapered exponentially clustered poles along the imaginary axis, specifically, the square roots of the poles $p_j$ defined in \eqref{eq:tapered2} as given by
\begin{align}\label{eq:1taperedimag}
\widetilde{p}_{\pm j} =&\pm i\sqrt{C}\exp\left(-\frac{\sigma}{2}\big(\sqrt{N_1}-\sqrt{j}\big)\right),\quad 1\le j\le N_1,\quad \sqrt{-1}=i.
\end{align}
 By employing the rational schemes as presented in Theorem \ref{mainthm} and Theorem \ref{mainthm2}, we can directly apply the identities
$$
|x|^2=x^2, \quad |x|^{2\alpha}=x^{2\alpha},\quad 2\pi\eta\sqrt{N\alpha}=\frac{4\pi^2}{\sigma}\sqrt{N}
$$
to establish the following corollary for all $\alpha>0$.

\begin{corollary}\label{mainthm3}
Let $r_N$ be defined in Theorem \ref{mainthm}, then $r_N(x^2)$
with
tapered lightning poles \eqref{eq:1taperedimag}
satisfies for  $\alpha>0$ that
\begin{equation}\label{eq: brateoimag}
\Big| r_N(x^2)-|x|^{2\alpha}\Big|
=\max\left\{\frac{|\sin(\alpha\pi)|}{\alpha\pi},\frac{|\sin(\alpha\pi)|}{(1-(\alpha))\pi}\right\}\cdot
\left\{\begin{array}{ll}
\mathcal{O}(e^{-\sigma\alpha\sqrt{N}}),&\sigma\le \frac{2\pi}{\sqrt{\alpha}},\\
\frac{\mathcal{O}(1)}{e^{\frac{4\pi^2}{\sigma}\sqrt{N}}-1},&\sigma> \frac{2\pi}{\sqrt{\alpha}},
\end{array}\right.
\end{equation}
while $\widetilde{r}_N$ is defined in Theorem  \ref{mainthm2} and $\widetilde{r}_N(x^2)$
with
tapered lightning poles \eqref{eq:1taperedimag}
satisfies that
\begin{equation}\label{eq: brateo_uniform_imag}
\Big|\widetilde{r}_N(x^2)-2|x|^{2\alpha}\log |x|\Big|
=\left\{\begin{array}{ll}
\mathcal{O}(\sqrt{N}e^{-\sigma\alpha\sqrt{N}}),&\sigma\le \frac{2\pi}{\sqrt{\alpha}},\\
\mathcal{O}(e^{-\frac{4\pi^2}{\sigma}\sqrt{N}}),&\sigma> \frac{2\pi}{\sqrt{\alpha}},
\end{array}\right.
\end{equation}
as $N =N_1+N_2\rightarrow \infty$ uniformly for  $x\in [-1,1]$.
\end{corollary}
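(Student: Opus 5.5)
The plan is to obtain Corollary \ref{mainthm3} by substituting $z=x^2$ into the approximants of Theorems \ref{mainthm} and \ref{mainthm2} in the special case $\beta=0$. There $S_0=[0,1]$, and $x\in[-1,1]$ gives $z=x^2\in[0,1]$.

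\textbf{Poles after substitution.} Write the lightning part of $r_N$ as $\sum_j a_j/(z-p_j)$ with $p_j=-C\exp(-\sigma(\sqrt{N_1}-\sqrt{j}))$. Since $p_j<0$, we have $p_j=\widetilde p_{j}\,\widetilde p_{-j}$ up to sign, with $\widetilde p_{\pm j}=\pm i\sqrt{|p_j|}$, which are exactly the poles \eqref{eq:1taperedimag}. The partial-fraction identity
\[
\frac{1}{x^2-p_j}=\frac{1}{2\widetilde p_j}\Big(\frac{1}{x-\widetilde p_j}-\frac{1}{x+\widetilde p_j}\Big)
\]
shows that $r_N(x^2)$ is a rational function in the lightning-plus-polynomial format with the $2N_1$ poles \eqref{eq:1taperedimag}. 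The polynomial part $P_{N_2}(x^2)$ has degree $2N_2=\mathcal{O}(\sqrt{N_1})$. The total degree is doubled, but I will state the bound in terms of the $N$ of Theorem \ref{mainthm}.

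\textbf{Error bound for $|x|^{2\alpha}$.} Because $|x|^{2\alpha}=(x^2)^\alpha$, the error $|r_N(x^2)-|x|^{2\alpha}|$ equals $|r_N(z)-z^\alpha|$ at $z=x^2\in S_0$. Theorem \ref{mainthm} with $\beta=0$ gives $\sigma_{\rm opt}=2\pi/\sqrt{\alpha}$, which matches the threshold in \eqref{eq: brateoimag}. In the regime $\sigma>\sigma_{\rm opt}$, the exponent is
\[
\pi\eta\sqrt{4N\alpha}=2\pi\eta\sqrt{N\alpha}=\frac{4\pi^2}{\sigma}\sqrt{N}.
\]
Hence $\mathcal{O}(e^{-4\pi^2\sqrt{N}/\sigma})$, which equals $\mathcal{O}(1)/(e^{4\pi^2\sqrt N/\sigma}-1)$ since the denominator is asymptotic to $e^{4\pi^2\sqrt N/\sigma}$. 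The prefactor involving $\sin(\alpha\pi)$ carries over verbatim.

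\textbf{Error bound for $2|x|^{2\alpha}\log|x|$.} For $x\neq0$ we have $(x^2)^\alpha\log(x^2)=2|x|^{2\alpha}\log|x|$, and both sides vanish continuously at $x=0$ when $\alpha>0$. So Theorem \ref{mainthm2} with $\beta=0$ yields \eqref{eq: brateo_uniform_imag}, with the same exponent identity applied in the regime $\sigma>\sigma_{\rm opt}$.

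\textbf{Main obstacle.} The only delicate point is bookkeeping. First, the poles $\pm\widetilde p_j$ arise via partial fractions, and this is valid because the $p_j$ are nonzero and distinct. Second, "$N$" in the corollary must be read as the $N$ of the underlying $z$-approximant, or else absorbed by noting that $\sqrt{2N}$ versus $\sqrt N$ only rescales the constants. I will state the first reading explicitly, as the corollary's $N=N_1+N_2$ suggests.
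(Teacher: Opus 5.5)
Your proposal is correct and follows the paper's own argument: set $\beta=0$ in Theorems \ref{mainthm} and \ref{mainthm2}, substitute $z=x^2\in[0,1]=S_0$, use $(x^2)^\alpha=|x|^{2\alpha}$ and $(x^2)^\alpha\log(x^2)=2|x|^{2\alpha}\log|x|$, and convert the exponent via $2\pi\eta\sqrt{N\alpha}=\frac{4\pi^2}{\sigma}\sqrt N$. Your partial-fraction check that the poles become \eqref{eq:1taperedimag}, and your reading of $N$ as the $N$ of the $z$-approximant, both match what the paper intends. One side remark is wrong, though it does not affect the proof: your discarded fallback about $\sqrt{2N}$ versus $\sqrt N$ does not work, because that change alters the exponential rate and cannot be absorbed into the $\mathcal{O}$-constant, so the reading you adopted is the one actually needed.
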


\bigskip
Furthermore, combining Theorems \ref{mainthm} and \ref{mainthm2} with the decomposition framework of Gopal and Trefethen \cite{Gopal2019}, we rigorously prove root-exponential convergence for LP approximations on corner domains and explicitly derive the optimal pole clustering parameter.

\bigskip
The remainder of the paper is organized as follows. In Section \ref{sec:2}, we construct  the LP schemes for $z^{\alpha}$ and $z^{\alpha}\log{z}$ within the sector domain
$S_{\beta}$. The corresponding asymptotic relation, $N_2=\mathcal{O}(\sqrt{N_1})$, is established via Runge's approximation theorem, adopting the approach outlined by Levin and Saff \cite{LevinSaff} and Herremans et al. \cite{Herremans2023}.
Section \ref{sec:3} applies Cauchy's integral theorem and Poisson's summation formula to composite rectangle rules for integrals over the real line, and provides a rigorous analysis of the convergence rates of the numerical quadratures for $z^{\alpha}$ and $z^{\alpha}\log{z}$. These rates are essential for establishing the root-exponential convergence of the LP approximations. In Section \ref{sec:4}, we prove Theorem \ref{mainthm} and Theorem \ref{mainthm2} and their generalizations, including the optimality of the parameter $\sigma_{\rm opt}=\frac{\pi\sqrt{2(2-\beta)}}{\sqrt{\alpha}}$.
Section \ref{sec:5} extends the analysis to problems with corner singularities, demonstrating the root-exponential convergence of LP approximations and the optimal choice of the clustered parameters. Finally, Section \ref{conclusion} summarizes the main conclusions of the paper.


\section{Preliminaries}\label{sec:2}
Building on Stieltjes's truncated continued fractions (i.e., rational functions), Stenger \cite{Stenger1981,Stenger1986,Stenger1993} constructed explicit rational approximants for functions on the intervals
$[-1,1]$, $[0,+\infty)$, or $(-\infty,+\infty)$ by utilizing integral representations with preassigned poles. Following this framework, we first present the integral representations of $z^\alpha$ and $z^\alpha \log z$, and subsequently formulate the corresponding LP schemes.

\subsection{Rational Approximations to  $z^{\alpha}$ and $z^{\alpha}\log z$}\label{subsec21}
A robust methodology for deriving rational function approximations to singular functions $z^\alpha$ and $z^\alpha \log z$ lies in the composite rectangular rule approximation of their integral representations \cite{XYW2025}.

Recently, by extending the Paley-Wiener theorem to a horizontal strip and leveraging asymptotic results for Fourier transforms, the root-exponential convergence of the LP approximation \eqref{eq:rat} with $N_2=\mathcal{O}(N_1)$
was established in Xiang, Wu and Yang \cite{XYW2025} for approximating functions of the form $z^{\alpha}g(z)$
or $z^\alpha g(z)\log z$ in $S_\beta$ via the uniform exponentially clustered poles \eqref{eq:uniform0}. An exact order of convergence was given for each clustered parameter $\sigma>0$ in \eqref{eq:uniform0}.
Furthermore, the fastest achievable convergence rates have been identified as  $\mathcal{O}(e^{-\pi\sqrt{(2-\beta)N\alpha}})$ for $z^{\alpha}g(z)$ and $\mathcal{O}(\sqrt{N}e^{-\pi\sqrt{(2-\beta)N\alpha}})$ for $z^{\alpha}g(z)\log z$, based on the optimal choice of the clustered parameter $\sigma=\frac{\pi\sqrt{2-\beta}}{\sqrt{\alpha}}$ for $0\le \beta<2$
  \cite{XYW2025}.
  In their framework, the integrand associated with the uniformly exponentially clustered poles \eqref{eq:uniform0} is analytic in a horizontal strip and belongs to $L^2(\mathbb{R})$. By contrast, we will see later that the integrand corresponding to the tapered exponentially clustered poles \eqref{eq:tapered2} exhibits fundamentally different behavior: it is well-defined on
$\mathbb{R_+}$ and has a branch cut at the origin $z=0$, yet it fails to lie in $L^2(\mathbb{R_+})$.


\begin{lemma}\label{complex_int_repre}\cite{XYW2025}
Let $\alpha>0$ and $m\ge\lfloor\alpha\rfloor$ where $\lfloor\alpha\rfloor$ denotes the largest integer not larger than $\alpha$.
Suppose that $s_1,\ldots,s_m$ are $m$ distinct numbers located outside $(-\infty,0]$.
Then it holds for all $z\in\mathbb{C}\setminus(-\infty,0)$ that
\begin{align}
z^{\alpha}=&\frac{\sin{(\alpha\pi)}}{(-1)^{m}\pi}
\int_0^{+\infty}\frac{zy^{\alpha-1}}{y+z}\left(\prod\limits_{k=1}^{m}\frac{z-s_k}{y+s_k}\right)\mathrm{d}y
+z\mathcal{L}[z^{\alpha-1};s_1,\ldots,s_m],\label{eq:cint_gener}\\
z^{\alpha}\log{z} =& \frac{\sin(\alpha\pi)}{(-1)^m\pi}\int_{0}^{+\infty}
\frac{zy^{\alpha-1}\log{y}}{y+z}\left(\prod_{k=1}^{m}\frac{z-s_k}{y+s_k}\right)\mathrm{d}y\notag\\
&+\frac{\cos(\alpha\pi)}{(-1)^{m}}\int_0^{+\infty}\frac{zy^{\alpha-1}}{y+z}
\left(\prod\limits_{k=1}^{m}\frac{z-s_k}{y+s_k}\right)\mathrm{d}y
+z\mathcal{L}[z^{\alpha-1}\log{z};s_1,\ldots,s_m],\label{eq:cint_log_gener}
\end{align}
where $\mathcal{L}[X(z);s_1,\ldots,s_m]$ denotes the Lagrange interpolating polynomial at nodes $s_1,\ldots,s_m$ for $X(z)=z^{\alpha-1}$ and
$z^{\alpha-1}\log{z}$, respectively.
Especially, it holds for $0<\alpha<1$ that
\begin{align*}
z^{\alpha}=&\frac{\sin{(\alpha\pi)}}{\pi}
\int_0^{+\infty}\frac{zy^{\alpha-1}}{y+z}\mathrm{d}y,\\
z^{\alpha}\log{z}=&\frac{\sin{(\alpha\pi)}}{\pi}
\int_{0}^{+\infty}\frac{zy^{\alpha-1}\log{y}}{y+z}\mathrm{d}y
+\cos{(\alpha\pi)}\int_{0}^{+\infty}\frac{zy^{\alpha-1}\mathrm{d}y}{y+z}.
\end{align*}
\end{lemma}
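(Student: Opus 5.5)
The plan is to start from the classical Stieltjes-type representation and raise the exponent by partial fractions. For $0<\alpha<1$ and $z\notin(-\infty,0]$, the formula $z^{\alpha}=\frac{\sin(\alpha\pi)}{\pi}\int_0^{\infty}\frac{zy^{\alpha-1}}{y+z}\,dy$ follows by substituting $y=zt$ in the Beta integral $\int_0^\infty \frac{t^{\alpha-1}}{1+t}dt=\frac{\pi}{\sin(\alpha\pi)}$. This substitution is valid first for $z>0$; the identity then extends to $\mathbb{C}\setminus(-\infty,0]$ by analytic continuation, since both sides are analytic there and the integral converges locally uniformly. For the logarithmic version, I would differentiate the identity $z^{\alpha}=\frac{\sin(\alpha\pi)}{\pi}\int_0^\infty \frac{zy^{\alpha-1}}{y+z}dy$ with respect to $\alpha$. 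Differentiation under the integral is justified by dominated convergence on compact $\alpha$-subsets of $(0,1)$, and it produces exactly the $\log y$ term together with the $\cos(\alpha\pi)$ term.

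For general $\alpha>0$ and $m\ge\lfloor\alpha\rfloor$, I would apply a Hermite-type error formula for Lagrange interpolation of the Cauchy kernel. For fixed $y>0$, the function $w\mapsto \frac{1}{y+w}$ interpolated at $s_1,\dots,s_m$ satisfies
\begin{equation*}
\frac{1}{y+z}-\mathcal{L}\Big[\frac{1}{y+\cdot}\Big](z)=\frac{1}{y+z}\prod_{k=1}^m\frac{z-s_k}{-y-s_k}=\frac{(-1)^m}{y+z}\prod_{k=1}^m\frac{z-s_k}{y+s_k}.
\end{equation*}
This identity holds because both sides are rational in $z$ with a single simple pole at $-y$ and the correct residue, and the left side vanishes at each $s_k$. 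Next, write $z^{\alpha}=z\cdot z^{\alpha-1}$. Here $z^{\alpha-1}$ admits a Stieltjes representation $\frac{\sin((\alpha-1)\pi)}{\pi}\int\ldots$ only formally when $\alpha-1\notin(-1,0)$, so instead I would argue directly. Define $F(z)$ as the right-hand integral with $\frac{\sin(\alpha\pi)}{(-1)^m\pi}$. The product factor gives decay $y^{\alpha-2-m}$ at infinity, so $F$ converges absolutely precisely when $m>\alpha-1$, i.e.\ $m\ge\lfloor\alpha\rfloor$. Integrability at $0$ needs $\alpha>0$. Then I would show that $G(z)=z^{\alpha-1}-\frac1z F(z)$ is a polynomial of degree $<m$ vanishing correctly, i.e.\ $G=\mathcal{L}[z^{\alpha-1}]$.

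For that step, the cleanest route is a contour argument. Apply Cauchy's integral formula to $h(w)=\frac{w^{\alpha-1}}{(w-z)\prod_k(w-s_k)}$ on a keyhole contour around $(-\infty,0]$: a large circle, a small circle at $0$, and the two banks of the cut. The sum of residues of $h$ inside at $w=z$ and $w=s_k$ equals
\begin{equation*}
\frac{z^{\alpha-1}-\mathcal{L}[z^{\alpha-1}](z)}{\prod_k(z-s_k)}
\end{equation*}
by the standard divided-difference identity. The big circle contributes nothing because $|h|\sim R^{\alpha-2-m}$ and $m>\alpha-1$. The small circle vanishes because $\alpha>0$. On the cut, $w=-y$ and the jump $w^{\alpha-1}|_{+}-w^{\alpha-1}|_{-}=2i\sin((\alpha-1)\pi)y^{\alpha-1}$ yields, after sign bookkeeping, $\frac{\sin(\alpha\pi)}{(-1)^m\pi}\int_0^\infty\frac{y^{\alpha-1}}{(y+z)\prod(y+s_k)}dy$. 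Multiplying by $z\prod(z-s_k)$ gives the first formula. For the log formula I would repeat this with $w^{\alpha-1}\log w$, whose jump across the cut is $2i\sin((\alpha-1)\pi)y^{\alpha-1}\log y+2\pi i\cos((\alpha-1)\pi)y^{\alpha-1}$; the $\pi$ in the second term cancels the prefactor, producing $\cos(\alpha\pi)$. Alternatively, I would differentiate the first identity in $\alpha$, noting that $\mathcal{L}$ is linear and commutes with $\partial_\alpha$.

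The main obstacle is sign and branch bookkeeping on the two banks. The factors $(-1)^m$ from $\prod(w-s_k)=(-1)^m\prod(y+s_k)$ and $\sin((\alpha-1)\pi)=-\sin(\alpha\pi)$ must combine to give exactly the stated constants. Verifying the decay conditions, which need $m\ge\lfloor\alpha\rfloor$, and the small-circle estimate for the logarithmic case, where $\epsilon^{\alpha}\log\epsilon\to0$, is routine. The special case $0<\alpha<1$ follows by taking $m=0$.
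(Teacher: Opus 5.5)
The paper does not prove this lemma; it is quoted from \cite{XYW2025}, so there is no in-text argument to compare against. Judged on its own, your keyhole-contour proof is correct and complete.

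I checked the bookkeeping. On the two banks $w=ye^{\pm i\pi}$ one has $(w-z)\prod_k(w-s_k)=(-1)^{m+1}(y+z)\prod_k(y+s_k)$. Then $\frac{1}{2\pi i}$ times the two bank integrals of $h$ equals $\frac{(-1)^m\sin(\alpha\pi)}{\pi}\int_0^\infty\frac{y^{\alpha-1}\,\mathrm{d}y}{(y+z)\prod_k(y+s_k)}$. For $w^{\alpha-1}\log w$, the $\log y$ parts reproduce this term with $\log y$ inserted. The extra $\pm i\pi$ in $\log w$ contributes $(-1)^m\cos(\alpha\pi)\int_0^\infty\frac{y^{\alpha-1}\,\mathrm{d}y}{(y+z)\prod_k(y+s_k)}$. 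Multiplying by $z\prod_k(z-s_k)$ gives exactly the stated constants, using $(-1)^m=1/(-1)^m$. The large-circle estimate needs precisely $m>\alpha-1$, i.e.\ $m\ge\lfloor\alpha\rfloor$, and the small circle needs $\alpha>0$. Your argument is the Hermite interpolation-remainder formula, the same tool the paper takes from Walsh in Subsection 2.2, with the contour collapsed onto the branch cut. It therefore fits the paper's framework naturally.

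Two small points. First, your residue computation assumes $z\neq0$ and $z\neq s_k$. The statement's domain $\mathbb{C}\setminus(-\infty,0)$ contains $z=0$, where both sides vanish trivially. The case $z=s_k$, where $h$ has a double pole, follows by continuity of both sides in $z$.

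Second, the route you set aside in your second paragraph is actually rigorous and avoids the branch-cut bookkeeping. For $0<\alpha<1$, integrate your partial-fraction identity against $\frac{\sin(\alpha\pi)}{\pi}y^{\alpha-1}$. Use $w^{\alpha-1}=\frac{\sin(\alpha\pi)}{\pi}\int_0^\infty\frac{y^{\alpha-1}}{y+w}\,\mathrm{d}y$ at $w=z$ and at each node $s_k$. This gives the first formula for every $m\ge0$. Both sides are analytic in $\alpha$ on the strip $0<\Re\alpha<m+1$, where the integral converges locally uniformly. The identity theorem then extends it to all $\alpha$ with $m>\alpha-1$. Differentiating in $\alpha$ gives the logarithmic formula, since, as you note, $\partial_\alpha$ commutes with $\mathcal{L}$.

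Either route works. The contour version yields the logarithmic case directly from the jump of $w^{\alpha-1}\log w$. The continuation version needs only the Beta integral and the elementary interpolation error for $1/(y+w)$.
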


To achieve sharper bounds on LP approximations in Theorem \ref{mainthm} and Theorem \ref{mainthm2}, we define $s_k$ as the roots of the shifted Chebyshev polynomial of the first kind $\mathrm{T}_{m}\left(\frac{1}{\omega}(s-\delta)-1\right)$, given explicitly by
$s_k=\delta+\omega\left(1+\cos\frac{(2m -2k+1)\pi}{2m}\right)\in (\delta,\delta+2\omega)$ for $k=1,2,\ldots,m$.
To balance the truncation error of the integrals, the approximation error derived from Runge's theorem in Subsection \ref{subsec22}, and the quadrature error in Section \ref{sec:3}, we set $\delta=1$, and $\omega=\frac{1}{2}$.
\begin{itemize}
\item For $\beta\not=0$, let $\aleph=\delta+\omega+\frac{3}{2}=3$. We then obtain
\begin{align}\label{eq:211}
\left\{\begin{array}{l}
\mathbb{T}_{m,\beta}=\max\limits_{z\in S_{\beta}}\prod\limits_{k=1}^{m}|z-s_k|\le \prod\limits_{k=1}^{m}\left(1+\delta+\omega+\omega\cos\frac{(2m -2k+1)\pi}{2m}\right)
\le (1+\delta+\omega)^{m},\\
\frac{\mathbb{T}_{m,\beta}}{\delta^m}=\mathbb{T}_{m,\beta}<\left(\delta+\omega+\frac{3}{2}\right)^{m}=\aleph^m.\end{array}\right.
\end{align}

\item For $\beta=0$, let $\aleph=\delta+\omega+\frac{1}{2}=2$. We then obtain
\begin{align}\label{eq:212}\left\{\begin{array}{l}
\mathbb{T}_{m,\beta}=\max\limits_{z\in [0,1]}\prod\limits_{k=1}^{m}|z-s_k|\le \prod\limits_{k=1}^{m}\left(\delta+\omega+\omega\cos\frac{(2m -2k+1)\pi}{2m}\right)
\le\left(\delta+\omega\right)^{m},\\
\frac{\mathbb{T}_{m,\beta}}{\delta^m}=\mathbb{T}_{m,\beta}<\left(\delta+\omega+\frac{1}{2}\right)^{m}=\aleph^m.\end{array}\right.
\end{align}
\end{itemize}

The derivation of the exponentially clustered poles  \eqref{eq:tapered2} from \eqref{eq:cint_gener} and \eqref{eq:cint_log_gener} relies crucially on the exponential transformation  $y=Ce^{\frac{1}{\alpha}t}$. By applying this transformation, \eqref{eq:cint_gener} and \eqref{eq:cint_log_gener} lead to the following representations for $z\in S_{\beta}$
\begin{align}
z^\alpha=&\frac{\sin(\alpha\pi)}{(-1)^{m}\alpha\pi}
\int_{-\infty}^{+\infty}
\frac{zC^{\alpha}e^t}{Ce^{\frac{1}{\alpha}t}+z}
\left(\prod\limits_{k=1}^{m}\frac{z-s_k}{Ce^{\frac{1}{\alpha}t}+s_k}\right)\mathrm{d}t
+z\mathcal{L}[z^{\alpha-1};s_1,\ldots,s_m]\label{eq:zalpha}
\end{align}
where $m=\lfloor\alpha\rfloor=\alpha-(\alpha)$, and
\begin{align}
z^{\alpha}\log{z} =& \frac{\sin(\alpha\pi)}{(-1)^m\alpha^2\pi}\int_{-\infty}^{+\infty}
\frac{zC^{\alpha}te^{t}}{Ce^{\frac{1}{\alpha}t}+z}
\left(\prod_{k=1}^{m}\frac{z-s_k}{Ce^{\frac{1}{\alpha}t}+s_k}\right)\mathrm{d}t\notag\\
&+\left[\frac{\sin(\alpha\pi)\log{C}}{(-1)^m\alpha\pi}+\frac{\cos(\alpha\pi)}{(-1)^{m}\alpha}\right]
\int_{-\infty}^{+\infty}\frac{zC^{\alpha}e^{t}}{Ce^{\frac{1}{\alpha}t}+z}
\left(\prod_{k=1}^{m}\frac{z-s_k}{Ce^{\frac{1}{\alpha}t}+s_k}\right)\mathrm{d}t\label{eq:cint_log_gener_substituting}\\
&+z\mathcal{L}[z^{\alpha-1}\log{z};s_1,\ldots,s_m]\notag
\end{align}
where  $m=\lceil\alpha\rceil$ represents the smallest integer greater than or equal to $\alpha$. In contrast to the behavior of $z^\alpha$, when $\alpha$ is not a positive integer, Theorem \ref{mainthm2} for $z^\alpha\log z$ holds uniformly for all $z\in S_\beta$  as $\alpha$ tends to a positive integer, as evidenced by the error bounds \eqref{boundforwidetildeE} and the definition of $\kappa$ given in \eqref{eq:kappa}.

Thereafter, from \eqref{eq:zalpha} and \eqref{eq:cint_log_gener_substituting} the rational approximations for $z^{\alpha}$ and $z^{\alpha}\log{z}$ can be derived from the discretization of truncations in a finite interval of infinite improper integrals

\begin{align}\label{eq:truncerrors}
&\int_{-\infty}^{+\infty}
\frac{zC^{\alpha}t^l e^{t}}{Ce^{\frac{1}{\alpha}t}+z}
\left(\prod_{k=1}^{m}\frac{z-s_k}{Ce^{\frac{1}{\alpha}t}+s_k}\right)\mathrm{d}t\notag\\
=&\left\{\int_{-\infty}^{-T}+\int_{-T}^{\kappa T}+\int_{\kappa T}^{+\infty}\right\}
\frac{zC^{\alpha}t^l e^t}{Ce^{\frac{1}{\alpha}t}+z}
\left(\prod\limits_{k=1}^{m}\frac{z-s_k}{Ce^{\frac{1}{\alpha}t}+s_k}\right)\mathrm{d}t, \quad T>0,\quad l=0,1.
\end{align}

The first and third integrals in \eqref{eq:truncerrors} can be estimated using the techniques outlined in \cite[(3.3)-(3.10)]{XYW2025}. For the sake of clarity, we present the detailed derivations here, which will be referenced in subsequent analysis.

For $z=|z|e^{\pm i\frac{\theta\pi}{2}}=xe^{\pm i\frac{\theta\pi}{2}}\in S_\beta$ with $0\le x\le 1$ and $t\in \mathbb{R}$, we have
\begin{align}\label{eq:est}
\big{|}Ce^{\frac{1}{\alpha}t}+z\big{|}
=&\left\{\begin{array}{ll}
\sqrt{C^2e^{\frac{2}{\alpha}t}+2Cxe^{\frac{1}{\alpha}t}\cos\frac{\theta\pi}{2} +x^2},&0\le \theta\le 1\\
\sqrt{\left(Ce^{\frac{1}{\alpha}t}+x\cos\frac{\theta\pi}{2}\right)^2+x^2\sin^2\frac{\theta\pi}{2}},&1< \theta\le \beta< 2\\
\sqrt{\left(Ce^{\frac{1}{\alpha}t}\cos\frac{\theta\pi}{2}+x\right)^2+C^2e^{\frac{2}{\alpha}t}\sin^2\frac{\theta\pi}{2}},&1< \theta\le \beta< 2
\end{array}\right.\notag\\
\ge& {\cal X}(\theta)\max\left\{
x,
Ce^{\frac{1}{\alpha}t}\right\}\ge {\cal X}(\beta)\max\left\{
x,
Ce^{\frac{1}{\alpha}t}\right\}
\end{align}
where $${\cal X}(\theta)=\left\{
\begin{array}{ll}
1,&0\le \theta\le 1,\\
\sin{\frac{\theta\pi}{2}}
\ge\sin\frac{\beta\pi}{2},&1< \theta\le \beta<2.
\end{array}\right.$$
From \eqref{eq:est} we obtain, for all $t\le 0$,
\begin{align}\label{ine:minus_u}
{\displaystyle\left|\frac{zC^{\alpha}t^l e^t}{Ce^{\frac{1}{\alpha}t}+z}
\left(\prod\limits_{k=1}^{m}\frac{z-s_k}{Ce^{\frac{1}{\alpha}t}+s_k}\right)\right|
\le \frac{x|t|^lC^{\alpha}e^t} {x{\cal X}(\beta)}\cdot\frac{\mathbb{T}_{m,\beta}}{\delta^{m}}
=\frac{|t|^l\mathbb{T}_{m,\beta} C^{\alpha}e^t}{\delta^{m}{\cal X}(\beta)}}
\end{align}
and consequently
\begin{align}\label{eq:inequ_neg}
\left|\int_{-\infty}^{-T}
\frac{zC^{\alpha}t^l e^t}{Ce^{\frac{1}{\alpha}t}+z}
\left(\prod\limits_{k=1}^{m}\frac{z-s_k}{Ce^{\frac{1}{\alpha}t}+s_k}\right)\mathrm{d}t\right|
\le\frac{\mathbb{T}_{m,\beta}C^{\alpha}(1+T)^le^{-T}}{\delta^{m}{\cal X}(\beta)}.
\end{align}
While for $t>0$, setting
\begin{align}\label{eq:kappa}
\kappa=\frac{\alpha}{m+1-\alpha}=\left\{\begin{array}{ll}\frac{\alpha}{1-(\alpha)},&l=0,\\ \frac{\alpha}{2-(\alpha)},&l=1,\,\mbox{$\alpha$ is not a positive integer},\\ \alpha,&l=1,\,\mbox{$\alpha$ is a positive integer},\end{array}\right.
\end{align}
we obtain
\begin{align}\label{ieq:positive_u}
\left|\frac{zt^lC^{\alpha}e^t}{Ce^{\frac{1}{\alpha}t}+z}
\left(\prod\limits_{k=1}^{m}\frac{z-s_k}{Ce^{\frac{1}{\alpha}t}+s_k}\right)\right|
\le \frac{xt^lC^{\alpha}e^t}{Ce^{\frac{1}{\alpha}t}{\cal X}(\beta)}\cdot\frac{\mathbb{T}_{m,\beta}}{C^{m}e^{\frac{m}{\alpha}t}}
\le\frac{\mathbb{T}_{m,\beta}t^le^{-\frac{1}{\kappa}t}}{C^{m+1-\alpha}{\cal X}(\beta)}
\end{align}
and therefore
\begin{align}\label{eq:inequ_pos}
\int_{\kappa T}^{+\infty}
\left|\frac{zC^{\alpha}t^l e^t}{Ce^{\frac{1}{\alpha}t}+z}
\left(\prod\limits_{k=1}^{m}\frac{z-s_k}{Ce^{\frac{1}{\alpha}t}+s_k}\right)\right|\mathrm{d}t
\le\frac{\mathbb{T}_{m,\beta}\kappa(\kappa+\kappa T)^le^{-T}}{C^{m+1-\alpha}{\cal X}(\beta)}.
 \end{align}
Combining these results yields the following bound for the truncated error $\widehat{E}^{(l)}_T(z)$
\begin{align}\label{eq:boundforwidehatE}
\left|\widehat{E}^{(l)}_T(z)\right|=&\left|\left\{\int_{-\infty}^{-T}+\int_{\kappa T}^{+\infty}\right\}
\frac{zC^{\alpha}t^l e^t}{Ce^{\frac{1}{\alpha}t}+z}
\left(\prod\limits_{k=1}^{m}\frac{z-s_k}{Ce^{\frac{1}{\alpha}t}+s_k}\right)\mathrm{d}t\right|\notag\\
\le&
\frac{(1+T)^l\mathbb{T}_{m,\beta}C^{\alpha}e^{-T}}{{\cal X}(\beta)}
\left(\frac{1}{\delta^{m}}+\frac{\kappa^{l+1}}{C^{m+1}}\right).
\end{align}

Building upon the integral over the finite interval $[-T,\kappa T]$ in \eqref{eq:truncerrors}, we apply the transformation $t+T=\sqrt{u}$.
This transformation, originally introduced for $\sqrt{x}$ in \cite{Herremans2023},
facilitates the derivation of the formula presented in \eqref{eq:tapered2}. Consequently, the original integral in \eqref{eq:truncerrors} can be expressed as
\begin{align}\label{eq:rootrat}
&\int_{-T}^{\kappa T}
\frac{zC^{\alpha}t^l e^{t}}{Ce^{\frac{1}{\alpha}t}+z}
\left(\prod_{k=1}^{m}\frac{z-s_k}{Ce^{\frac{1}{\alpha}t}+s_k}\right)\mathrm{d}t\notag\\
=&\int_0^{(\kappa+1)^2 T^2}
 \frac{1}{2 \sqrt{u}}\frac{z C^{\alpha}\left(\sqrt{u}-T\right)^le^{\sqrt{u}-T}}{Ce^{\frac{1}{\alpha}(\sqrt{u}-T)}+z}
 \left(\prod\limits_{k=1}^{m}\frac{z-s_k}{Ce^{\frac{1}{\alpha}(\sqrt{u}-T)}+s_k}\right)\,\mathrm{d}u.
\end{align}

Let
 \begin{align*}
T&=\sqrt{N_1h}=\sigma\alpha\sqrt{N_1},\quad \mathcal{N}_th =(\kappa+1)^2 T^2,\quad N_t=\lceil\mathcal{N}_t\rceil=\lceil(\kappa+1)^2N_1\rceil (\le \mathcal{N}_t+1).
\end{align*}
We discretize \eqref{eq:rootrat} over the finite interval $[0,N_th]$ using the rectangular rule with step length  $h=\sigma^2\alpha^2$. The quadrature points are set at $u = jh$
for $j=1,2,\ldots,N_t$, which yields the following rational approximation
\begin{align}\label{eq:intC1}
&\int_{-\infty}^{+\infty}
\frac{zC^{\alpha}t^l e^{t}}{Ce^{\frac{1}{\alpha}t}+z}
\left(\prod_{k=1}^{m}\frac{z-s_k}{Ce^{\frac{1}{\alpha}t}+s_k}\right)\,\mathrm{d}t\notag\\
=&\int_0^{(\kappa+1)^2 T^2}
 \frac{z C^{\alpha}(\sqrt{u}-T)^le^{\sqrt{u}-T}}{2 \sqrt{u}\left(Ce^{\frac{1}{\alpha}(\sqrt{u}-T)}+z\right)}
 \left(\prod\limits_{k=1}^{m}\frac{z-s_k}{Ce^{\frac{1}{\alpha}(\sqrt{u}-T)}+s_k}\right)\,\mathrm{d}u +\widehat{E}^{(l)}_T(z)\\
=&\int_0^{N_th}
 \frac{1}{2 \sqrt{u}}\frac{z C^{\alpha}(\sqrt{u}-T)^le^{\sqrt{u}-T}}{Ce^{\frac{1}{\alpha}(\sqrt{u}-T)}+z}
 \left(\prod\limits_{k=1}^{m}\frac{z-s_k}{Ce^{\frac{1}{\alpha}(\sqrt{u}-T)}+s_k}\right)\,\mathrm{d}u
 +E^{(l)}_T(z)\notag\\
:=& r^{(l)}_{N_t}(z) +E^{(l)}_Q(z)+E^{(l)}_T(z),\notag
\end{align}
where
 $r^{(l)}_{N_t}(z)$ is the rational approximation defined by
\begin{align}\label{eq:ECrat1CC}
 r^{(l)}_{N_t}(z)=&h\sum_{j=1}^{N_t}
\frac{zC^{\alpha}(\sqrt{jh}-T)^le^{\sqrt{jh}-T}}{2 \sqrt{jh}\left(Ce^{\frac{1}{\alpha}(\sqrt{jh}-T)}+z\right)}
\left(\prod\limits_{k=1}^{m}\frac{z-s_k}{Ce^{\frac{1}{\alpha}(\sqrt{jh}-T)}+s_k}\right)
\end{align}
derived from the composite rectangular rule, $E_Q^{(l)}(z)$ is the quadrature error
\begin{align*}
E^{(l)}_Q(z)=\int_0^{N_th}
 \frac{1}{2 \sqrt{u}}\frac{z C^{\alpha}(\sqrt{u}-T)^le^{\sqrt{u}-T}}{Ce^{\frac{1}{\alpha}(\sqrt{u}-T)}+z}
 \left(\prod\limits_{k=1}^{m}\frac{z-s_k}{Ce^{\frac{1}{\alpha}(\sqrt{u}-T)}+s_k}\right)\,\mathrm{d}u
- r^{(l)}_{N_t}(z),
\end{align*}
and $E^{(l)}_T(z)$ is defined below and can be estimated using the bounds from \eqref{eq:inequ_pos} and \eqref{eq:boundforwidehatE} as follows
\begin{align}\label{boundforEl(z)}
|E^{(l)}_T(z)|
=&\left|\widehat{E}^{(l)}_T(z)-\int_{(\kappa+1)^2 T^2}^{N_th}
\frac{zC^{\alpha}(\sqrt{u}-T)^le^{\sqrt{u}-T}}{2 \sqrt{u}\left(Ce^{\frac{1}{\alpha}(\sqrt{u}-T)}+z\right)}
\left(\prod\limits_{k=1}^{m}\frac{z-s_k}{Ce^{\frac{1}{\alpha}(\sqrt{u}-T)}+s_k}\right)\,\mathrm{d}u\right|\notag\\
\le&\left|\widehat{E}^{(l)}_T(z)\right|+\left|\int_{(\kappa+1)^2 T^2}^{+\infty}
\frac{zC^{\alpha}(\sqrt{u}-T)^le^{\sqrt{u}-T}}{2 \sqrt{u}\left(Ce^{\frac{1}{\alpha}(\sqrt{u}-T)}+z\right)}
\left(\prod\limits_{k=1}^{m}\frac{z-s_k}{Ce^{\frac{1}{\alpha}(\sqrt{u}-T)}+s_k}\right)\,\mathrm{d}u\right|\\
\le &|\widehat{E}^{(l)}_T(z)|+\int_{\kappa T}^{+\infty}
\left|\frac{zC^{\alpha}t^le^{t}}{Ce^{\frac{1}{\alpha}t}+z}
\left(\prod\limits_{k=1}^{m}\frac{z-s_k}{Ce^{\frac{1}{\alpha}t}+s_k}\right)\right|\,\mathrm{d}t\notag\\
\le&\frac{2(1+T)^{l}\mathbb{T}_{m,\beta}C^{\alpha}}{{\cal X}(\beta)e^{T}}
\left(\frac{1}{\delta^{m}}+\frac{\kappa^{l+1}}{C^{m+1}}\right).\notag
\end{align}

\begin{figure}[htbp]
\centerline{\includegraphics[height=4.6cm,width=6cm]{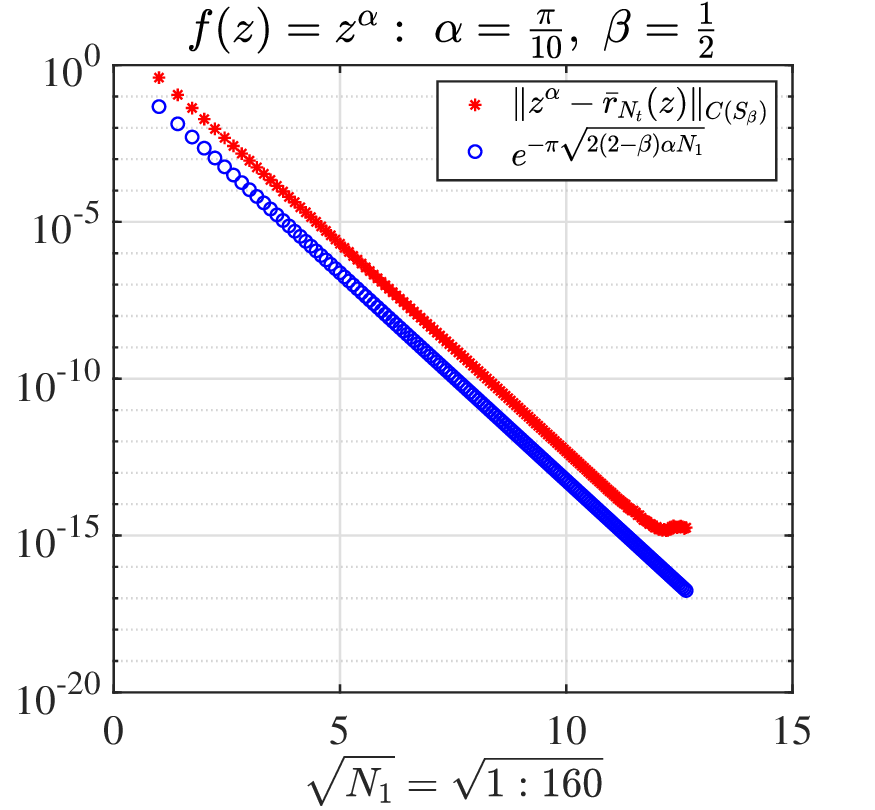}\includegraphics[height=4.6cm,width=6cm]{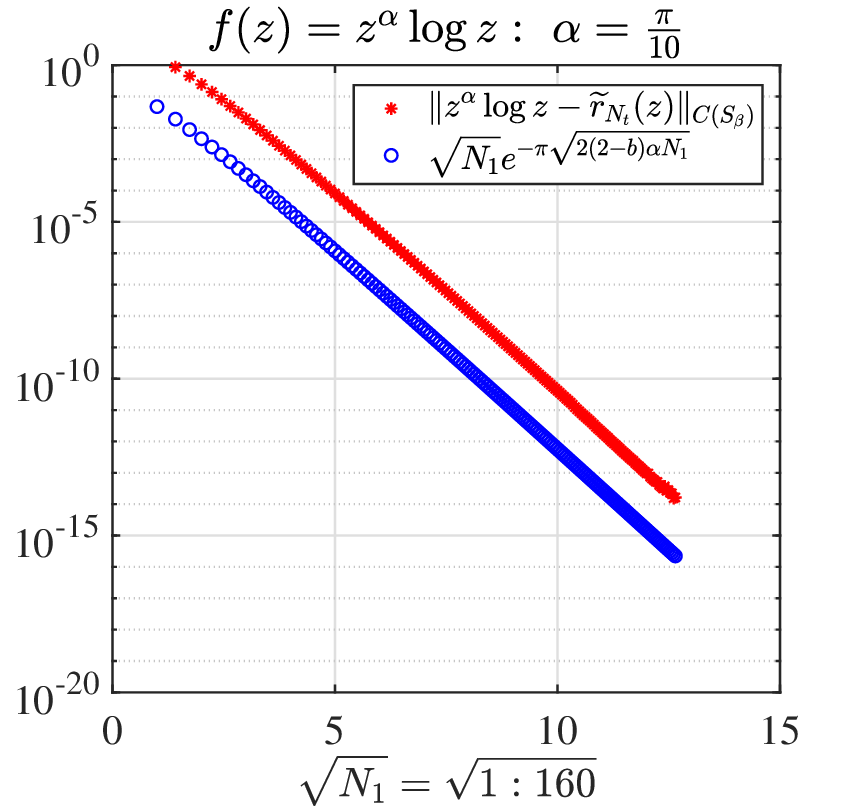}}
\caption{$\|z^\alpha-\overline{r}_{N_t}(z)\|_{C(S_\beta)}$ (left) and $\|z^\alpha\log z-\widetilde{r}_{N_t}(z)\|_{C(S_\beta)}$ (right) with  $\sigma=\frac{\pi\sqrt{2(2-\beta)}}{\sqrt{\alpha}}$ and $N_t=\lceil(\kappa+1)^2N_1\rceil$ where $\kappa$ is defined by \eqref{eq:kappa}.}
\label{rationalappr}
\end{figure}

Fig.~\ref{rationalappr} depicts the infinity-norm of the error in approximating $z^\alpha$ (left) and $z^\alpha\log z$ (right) by the rational functions
$\overline{r}_{N_t}(z)$ and $\widetilde{r}_{N_t}(z)$, respectively, over the sector $S_\beta$. These functions are derived from the representations \eqref{eq:zalpha} and \eqref{eq:cint_log_gener_substituting} combined with \eqref{eq:ECrat1CC}, correspond to $z\in S_{\beta}$ with parameters $\alpha=\frac{\pi}{10}$, $\beta=\frac{1}{2}$,  $\sigma=\frac{\pi\sqrt{2(2-\beta)}}{\sqrt{\alpha}}$ and $N_t=\lceil(\kappa+1)^2N_1\rceil $.

\bigskip
Furthermore, incorporating the exponential clustered poles $$p_j=-Ce^{\frac{1}{\alpha}(\sqrt{jh}-T)}
=-Ce^{-\sigma(\sqrt{N_1}-\sqrt{j})},\quad 1\le j\le N_t$$
into \eqref{eq:ECrat1CC},
the rational function $r_{N_t}^{(l)}(z)$ can be further rewritten as
\begin{align}\label{eq:ECrat1C}
r^{(l)}_{N_t}(z)=&h\sum_{j=1}^{N_t}\frac{z|p_j|^{\alpha}(\sqrt{jh}-T)^l}{2 \sqrt{jh}(z-p_j)}
\left(\prod\limits_{k=1}^{m}\frac{z-s_k}{s_k-p_j}\right)\notag\\
=&\sum_{j=1}^{N_1}\left(
\frac{p_j|p_j|^{\alpha}h[\sqrt{jh}-T]^l}{2 \sqrt{jh}(z-p_j)}+\frac{|p_j|^{\alpha}h[\sqrt{jh}-T]^l}{2 \sqrt{jh}}\right)
\left(\prod\limits_{k=1}^{m}\frac{z-s_k}{s_k-p_j}
\right)\\
&+\sum_{j=N_1+1}^{N_t}
\frac{z|p_j|^{\alpha}h[\sqrt{jh}-T]^l}{2\sqrt{jh}(z-p_j)}
\left(\prod\limits_{k=1}^{m}\frac{z-s_k}{s_k-p_j}
\right)\notag\\
=&\sum_{j=1}^{N_1}\frac{a^{(l)}_j}{z-p_j}
+\sum_{j=N_1+1}^{N_t}
\frac{z|p_j|^{\alpha}h(\sqrt{jh}-T)^l}{2\sqrt{jh}(z-p_j)}
\left(\prod\limits_{k=1}^{m}\frac{z-s_k}{s_k-p_j}
\right)+P^{(l)}_m(z)\notag\\
:=&r^{(l)}_{N_1}(z)+r^{(l)}_2(z)+P^{(l)}_m(z)\notag
 \end{align}
where $a^{(l)}_j=(-1)^{m}\frac{hp_j|p_j|^\alpha(\sqrt{jh}-T)^l}{2\sqrt{jh}}$ ($1\le j\le N_1$) is obtained by applying Cauchy's residue theorem, while $P^{(l)}_m(z)$ represents a polynomial of degree $m$.

\bigskip
It is worth noting that, by definition, $N_t=\lceil(\kappa+1)^2N_1\rceil$, which is significantly larger than $\sqrt{N_1}$. Subsequently, we show that the term
$r^{(l)}_2(z)$ in  \eqref{eq:ECrat1C} can be approximated by a polynomial of degree $N_2=\mathcal{O}(\sqrt{N_1})$.

\subsection{LP approximations with $N_2=\mathcal{O}(\sqrt{N_1})$ for $z^\alpha$ and $z^\alpha\log z$}\label{subsec22}
To establish LP approximations \eqref{eq:rat} with $N_2=\mathcal{O}(\sqrt{N_1})$, following the approaches in \cite{Herremans2023,XYW2024,XYW2025}, it is essential to invoke Runge's approximation theorem \cite[pp. 76-77]{Gaier1987,Walsh1965}.

\begin{theorem}\cite[1885, Runge]{Gaier1987}
Suppose $K\subset \mathbb{C}$ is compacted, $K^C=\mathbb{C}\setminus K$ is connected, and $f$ is analytic on $K$. Then there exist polynomials $\left\{P_n\right\}_{n=1}^{\infty}$ such that
\begin{align*}
\lim_{n\rightarrow \infty}\max_{z\in K}|f(z)-P_n(z)|=0.
\end{align*}
\end{theorem}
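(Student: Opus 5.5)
We will follow the classical route for Runge's theorem: represent $f$ by a Cauchy integral, discretize it into a rational function with poles off $K$, and then push those poles to $\infty$ using the connectedness of $K^C$. Since $f$ is analytic on $K$, it is analytic on some open set $U\supset K$.

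\textbf{Step 1 (Cauchy representation).} Cover the plane by a square grid of mesh $d<\tfrac{1}{2}\mathrm{dist}(K,\mathbb{C}\setminus U)$ and keep the closed squares meeting $K$. Let $\Gamma$ be the sum of their positively oriented boundaries, discarding edges shared by two kept squares. Then $\Gamma$ is a finite union of oriented segments lying in $U\setminus K$, and
\begin{equation*}
f(z)=\frac{1}{2\pi i}\int_\Gamma\frac{f(\zeta)}{\zeta-z}\,\mathrm{d}\zeta,\qquad z\in K.
\end{equation*}
For $z$ in the interior of a kept square this follows from Cauchy's formula for that square plus cancellation on the shared edges. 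For $z$ on an edge it follows by continuity, since no surviving edge meets $K$.

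\textbf{Step 2 (rational approximation).} The integrand is jointly continuous on $\Gamma\times K$, and $\mathrm{dist}(\Gamma,K)>0$, so it is uniformly continuous there. Hence Riemann sums over fine partitions of the segments of $\Gamma$ converge to the integral uniformly in $z\in K$. Each Riemann sum is a finite linear combination of terms $1/(\zeta_\nu-z)$ with $\zeta_\nu\in\Gamma\subset K^C$. Therefore it suffices to show that each function $z\mapsto 1/(a-z)$ with $a\notin K$ is a uniform limit on $K$ of polynomials.

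\textbf{Step 3 (pole pushing; the main obstacle).} Let $\mathcal{A}$ be the uniform closure on $K$ of the polynomials; it is a closed algebra. Let $E$ be the set of $a\in K^C$ for which $1/(a-z)\in\mathcal{A}$. We show $E=K^C$ in three sub-steps.
\begin{itemize}[leftmargin=*]
\item \emph{$E$ contains a neighbourhood of $\infty$.} For $|a|>2\max_K|z|$, the geometric series $\frac{1}{a-z}=\sum_{n\ge0}z^n/a^{n+1}$ converges uniformly on $K$.
\item \emph{$E$ is open.} Suppose $b\in E$ and $|a-b|<\mathrm{dist}(b,K)$. Then
\begin{equation*}
\frac{1}{a-z}=\frac{1}{b-z}\cdot\frac{1}{1-\frac{b-a}{b-z}}=\sum_{n\ge0}(b-a)^n(b-z)^{-n-1},
\end{equation*}
and the series converges uniformly on $K$. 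Every power of $1/(b-z)$ lies in $\mathcal{A}$ because $\mathcal{A}$ is an algebra, so $a\in E$.
\item \emph{$E$ is relatively closed in $K^C$.} The same expansion, applied from some $b\in E$ close to a limit point $a\in K^C$, shows that the limit point lies in $E$.
\end{itemize}

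Since $K^C$ is connected and $E$ is a nonempty subset that is both open and relatively closed, we get $E=K^C$. Combining this with Steps 1 and 2 yields polynomials $P_n$ with $\max_K|f-P_n|\to0$.

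The delicate points are the construction of $\Gamma$ in Step 1 and the connectedness argument in Step 3. For the later application in the paper, one would also quantify the rate of convergence, as in Walsh's theory, via Green's function level lines. This is what delivers geometric convergence, and hence a polynomial degree $N_2=\mathcal{O}(\sqrt{N_1})$.
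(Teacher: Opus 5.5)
Your proof is correct. It is the standard grid-contour, Riemann-sum and pole-pushing argument, and each step checks out. Surviving edges avoid $K$ because any edge meeting $K$ lies in two kept squares and cancels. In the closedness sub-step, the needed inequality $|a-b|<\mathrm{dist}(b,K)$ holds as soon as $|a-b|<\frac{1}{2}\mathrm{dist}(a,K)$.

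The paper gives no proof of this theorem. It cites Gaier and then passes directly to Walsh's constructive form. There $P_n$ is the interpolant at $n+1$ Fekete points on $\partial K$, with $\|f-P_n\|_{C(K)}=\mathcal{O}(q^n)$. In Subsection 2.2 this is carried out through the Hermite interpolation formula on the boundary $\Gamma_\rho$ of the explicit neighbourhood $\Omega_\rho$, combined with the Levin--Saff bound on $\|\ell_{n+1}\|_{C(S_\beta)}/\min_{\Gamma_\rho}|\ell_{n+1}|$.

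Your route gives a short, self-contained proof of exactly the qualitative statement, under minimal hypotheses: any compact $K$ with connected complement, and no capacity or regularity considerations. It gives no rate, however, and no control of the error in terms of $f$, because the chain of re-expansions in your Step 3 is unquantified.

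The interpolation route gives $\|r^{(l)}_2-q^{(l)}_n\|_{C(S_\beta)}\le \|r^{(l)}_2\|_{C(\Omega_\rho)}\rho_0^{-n}$, with $\rho_0>1$ independent of the function approximated. Since $r^{(l)}_2$ changes with $N_1$, this bound is what the paper actually uses to obtain $N_2=\mathcal{O}(\sqrt{N_1})$. It is linear in $\max_{\Omega_\rho}|r^{(l)}_2|$ and geometric in $n$, with constants that do not depend on $N_1$. So, as you anticipate at the end, the qualitative Runge theorem you prove is only the starting point. The paper relies on the quantitative Walsh/Bernstein--Walsh estimate, not on the statement itself.
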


Based on a sequence of finitely connected domains \cite[pp. 8-9]{Walsh1965}, $P_n$ ($n=1,2\ldots$) are chosen as the interpolation polynomials constructed for the $n+1$ Fekete points $\{z_k\}_{k=0}^n$ on $\partial K$ satisfying
\begin{align*}
\|f-P_n\|_{C(K)}=\max_{z\in K}|f(z)-P_n(z)|=\mathcal{O}(q^n)
\end{align*}
for some $q\in(0,1)$ independent of $n$.

Observe that $p_j<-C$ for $N_1<j\le N_t$. We define the neighborhood $\Omega_{\rho}$ of $S_\beta$ as follows
\begin{align*}
\Omega_{\rho}=&\left\{z:\, z=xe^{\pm i\frac{\theta}{2}\pi},\, 0\le x\le 1+\mathbf{d}, \, 0\le \theta\le 2,\, \Re(z)\ge -{\bf d}\right\},&0\le \beta\le 1,\\
\Omega_{\rho}=&\left\{z:\, z=xe^{\pm i\frac{\theta}{2}\pi},\, 0\le x\le 1+\mathbf{d}, \, 0\le \theta\le 2\right\}\\
&\hspace{0.66cm}
 {\Large\setminus} \left\{z:\, z=-{\bf d}+xe^{\pm i\frac{ \theta }{2}\pi},\, 0< x\le 1+{\bf d}, \, \beta< \theta\le 2\right\},&1< \beta<2,
\end{align*}
where $\mathbf{d}=\min\left\{\frac{1}{2},\frac{C}{2}\right\}$ (see Fig. \ref{neighborhood}).
\begin{figure}[htbp]
\centerline{\includegraphics[width=8.8cm]{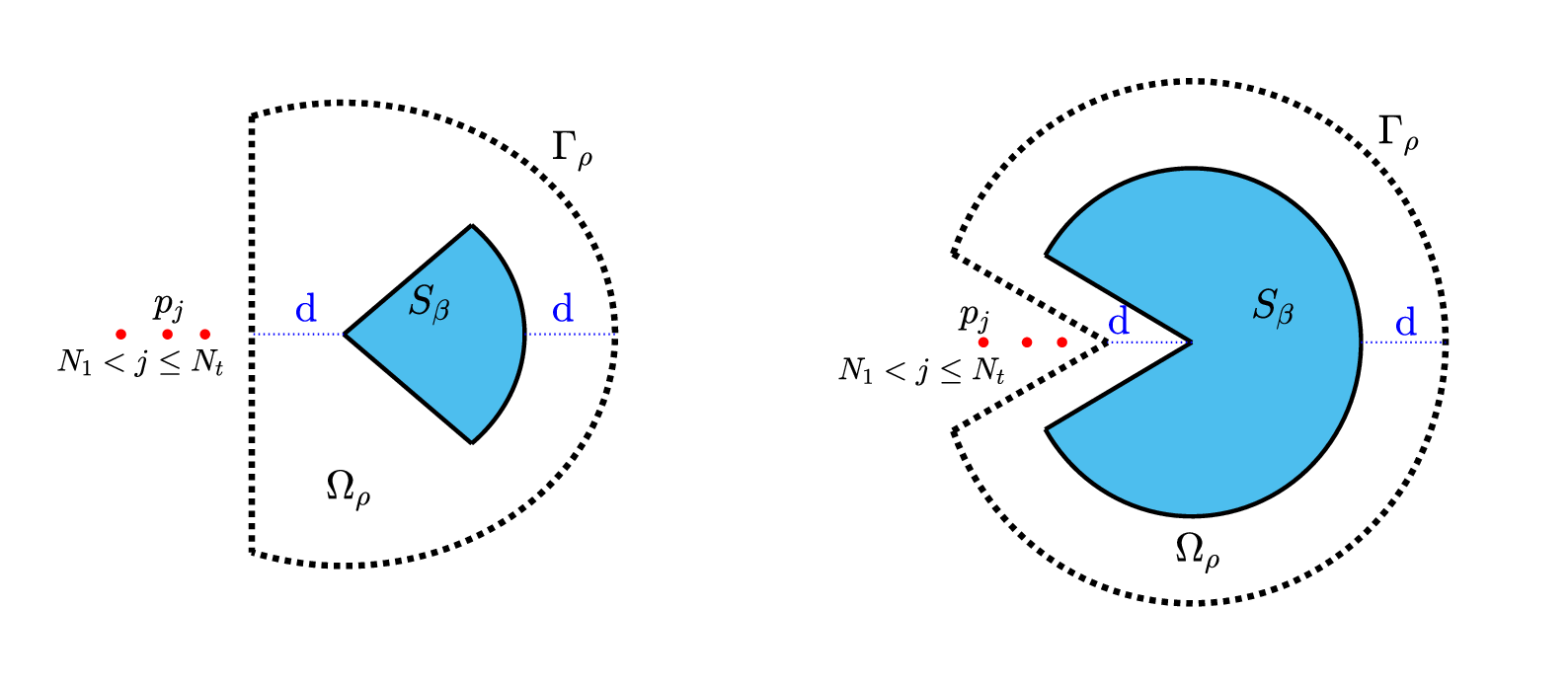}}
\vspace{-0.36cm}
\caption{Neighborhood $\Omega_{\rho}\supset S_{\beta}$ for $0\le \beta\le 1$ (left) and $1<\beta<2$ (right).}
\label{neighborhood}
\end{figure}
Under this construction,
$r_2^{(l)}(z)$ is analytic on $\Omega_\rho$. Furthermore, since $\Omega_{\rho}\subset \mathbf{B}(0,1+\mathbf{d})$, where $\mathbf{B}(0,1+\mathbf{d})$
denotes the disk of radius $1+\mathbf{d}$ centered at the origin, it follows that $|z|\le\frac{3}{2}$, $|z-s_k|\le |z|+s_k\le s_k+\frac{3}{2}$ for all $z\in \Omega_\rho$. Consequently, we have
$$\max\limits_{z\in  \Omega_\rho}\prod\limits_{k=1}^{m}|z-s_k|\le\prod\limits_{k=1}^{m}\left(s_k+\frac{3}{2}\right)
=\prod\limits_{k=1}^{m}\left[\delta+\omega+\frac{3}{2}+\omega\cos{\frac{(2k-1)\pi}{2m}}\right]
\le\left(\delta+\omega+\frac{3}{2}\right)^{m}=\aleph^m,
$$
where the last inequality follows from the arithmetic mean-geometric mean  inequality. In particular, for $\beta=0$, the aforementioned upper bound can be replaced by $\left(\delta+\omega+\frac{1}{2}\right)^{m}$, which is a direct consequence of the condition $|z-s_k|\le s_k+\frac{1}{2}$ for $z\in [-{\bf d},1]$.

We now proceed to establish an upper bound for $|r_2^{(l)}|$. Analogous to \eqref{eq:est}, for $0\le\beta\le 1$, let $z=xe^{\pm i\frac{ \theta }{2}\pi}\in \Omega_\rho$. We then have for  $t\ge 0$ that
\begin{align*}
\left| Ce^{\frac{1}{\alpha}t}+xe^{\pm i\frac{ \theta }{2}\pi}\right|
=& \sqrt{Ce^{\frac{2}{\alpha}t}+2xCe^{\frac{1}{\alpha}t}\cos\frac{\theta\pi}{2} +x^2}
\ge Ce^{\frac{1}{\alpha}t}, \quad  0\le \theta\le 1,\\
\left| Ce^{\frac{1}{\alpha}t}+xe^{\pm i\frac{ \theta }{2}\pi}\right|
\ge & Ce^{\frac{1}{\alpha}t}-|\Re(z)|
\ge Ce^{\frac{1}{\alpha}t}-\frac{1}{2}C\ge \frac{1}{2} Ce^{\frac{1}{\alpha}t}, \quad  1< \theta\le 2,\, -{\bf d}\le \Re(z)<0.
\end{align*}
On the other hand, for $1<\beta<2$, it follows that $\big{|}Ce^{\frac{1}{\alpha}t}+z\big{|}\ge Ce^{\frac{1}{\alpha}t}{\cal X}(\beta)$ for $z=xe^{\pm i\frac{ \theta }{2}\pi}\in \Omega_\rho$ with $0\le \theta\le \beta$ and $0\le x\le 1+\mathbf{d}$. Furthermore, for $z=-\frac{1}{2}u+xe^{\pm i\frac{ \theta }{2}\pi}\in \Omega_\rho$ with $0\le u\le \min\{1,C\}$ and $0\le x\le 1+{\bf d}$, we obtain for $t\ge 0$ that
\begin{align*}
&\left| Ce^{\frac{1}{\alpha}t}-\frac{1}{2}u+xe^{\pm i\frac{ \theta }{2}\pi}\right| \\
=&\begin{cases}
\sqrt{\left(Ce^{\frac{1}{\alpha}t}-\frac{1}{2}u\right)^2+2x\left(Ce^{\frac{1}{\alpha}t}-\frac{1}{2}u\right)\cos\frac{\theta\pi}{2} +x^2}, & 0\le \theta\le 1, \\
\sqrt{\left(\left(Ce^{\frac{1}{\alpha}t}-\frac{1}{2}u\right)\cos\frac{\theta\pi}{2}+x\right)^2+\left(Ce^{\frac{1}{\alpha}t}-\frac{1}{2}u\right)^2\sin^2\frac{\theta\pi}{2}}, & 1< \theta\le \beta< 2,
\end{cases} \\
\ge&\ \left(Ce^{\frac{1}{\alpha}t}-\frac{1}{2}u\right){\cal X}(\beta) \ge \left(Ce^{\frac{1}{\alpha}t}-\frac{1}{2}C\right){\cal X}(\beta) \ge \frac{1}{2}Ce^{\frac{1}{\alpha}t}{\cal X}(\beta).
\end{align*}
This implies that $\big{|}Ce^{\frac{1}{\alpha}t}+z\big{|}\ge \frac{1}{2}Ce^{\frac{1}{\alpha}t}{\cal X}(\beta)$ holds for all $z\in \Omega_\rho$.

Moreover,  for $N_1<j\le N_t-l$, by $Ce^{\frac{1}{\alpha}(\sqrt{jh}-T)}+s_k>Ce^{\frac{1}{\alpha}(\sqrt{jh}-T)}$ and
$$
(\sqrt{jh}-T)^l\le \sqrt{\mathcal{N}_th}-T\le \kappa T,\quad l=1;\quad\quad
(\sqrt{jh}-T)^l=1,\quad l=0,
$$
we deduce by $|z|\le \frac{3}{2}$ for $z\in \Omega_\rho$ that
\begin{align}\label{eq:boundonr2}
\left|r^{(l)}_2(z)\right|=&\left|h\sum_{j=N_1+1}^{N_t}\frac{z C^\alpha(\sqrt{jh}-T)^l e^{\sqrt{jh}-T}}{2\sqrt{jh}\left(Ce^{\frac{1}{\alpha}(\sqrt{jh}-T)}+z\right)}
\left(\prod\limits_{k=1}^{m}\frac{z-s_k}{Ce^{\frac{1}{\alpha}(\sqrt{jh}-T)}+s_k}\right)\right|\notag\\
\le&h\sum_{j=N_1+1}^{N_t-l}\frac{3(\sqrt{jh}-T)^le^{-\frac{1}{\kappa}(\sqrt{jh}-T)}\aleph^m}
{2\sqrt{jh}C^{m+1-\alpha}{\cal X}(\beta)}+lh\frac{3(\sqrt{N_th}-T)^le^{-\frac{1}{\kappa}(\sqrt{N_th}-T)}\aleph^m}
{2\sqrt{N_th}C^{m+1-\alpha}{\cal X}(\beta)}\\
\le&\frac{3(\kappa T)^l\aleph^m}{{\cal X}(\beta)C^{m-\alpha+1}}\int_{N_1h}^{+\infty}
\frac{e^{-\frac{1}{\kappa}(\sqrt{t}-T)}}{2 \sqrt{t}}\,\mathrm{d}t+
l\frac{\aleph^m T}{{\cal X}(\beta)C^{m-\alpha+1}}\notag\\
=&\frac{3\kappa^{l+1} T^l+l T}{{\cal X}(\beta)C^{m+1-\alpha}}\aleph^m,\notag
\end{align}
where the second inequality in \eqref{eq:boundonr2} is justified by the following estimates. For the first term, we have
$$
(\sqrt{jh}-T)^l\le (\sqrt{\mathcal{N}_th}-T)^l=(\kappa T)^l,\quad j=N_1+1,\ldots, N_t-l,\quad l=0,1.
$$
Furthermore, noting the monotonicity of the function $\frac{e^{-\frac{1}{\kappa}(\sqrt{t}-T)}}{2 \sqrt{t}}$ for $t>0$ (since $\frac{\mathrm{d}}{\mathrm{d}t}\frac{e^{-\frac{1}{\kappa}(\sqrt{t}-T)}}{2 \sqrt{t}}
=-\frac{e^{-\frac{1}{\kappa}(\sqrt{t}-T)}}{2^2 t}\left(\frac{1}{\kappa}+t^{-\frac{1}{2}}\right)<0$), the composite rectangular rule yields
\begin{align*}
h\sum_{j=N_1+1}^{N_t-l}3(\sqrt{jh}-T)^l
\frac{e^{-\frac{1}{\kappa}(\sqrt{jh}-T)}}
{2\sqrt{jh}}\le& 3(\kappa T)^l\int_{N_1h}^{+\infty}
\frac{e^{-\frac{1}{\kappa}(\sqrt{t}-T)}}{2 \sqrt{t}}\,\mathrm{d}t\\
=&3(\kappa T)^l\int_{T}^{+\infty}e^{-\frac{1}{\kappa}(u-T)}\,\mathrm{d}u=3\kappa^{l+1} T^l.
\end{align*}
For the second term, the condition $T^2=N_1h\ge 1$ ensures that
\begin{align*}
 h\frac{3(\sqrt{N_th}-T)e^{-\frac{1}{\kappa}(\sqrt{N_th}-T)}}
{2 \sqrt{N_th}}\le \frac{3he^{-1}\kappa}{2 (\kappa+1)T}=
\frac{3e^{-1}\kappa T}{2 (\kappa+1)N_1}<T,
\end{align*}
where we have used the inequality  $te^{-t}\le e^{-1}<1$ for $t\ge 0$.

\bigskip
We next demonstrate that $N_2$  can be chosen as $N_2=\mathcal{O}(\sqrt{N_1})$. According to the Hermite interpolation formula \cite[Theorem 2 of Chapter 8]{Walsh1965}, we have
\begin{align*}
r^{(l)}_2(z)-q^{(l)}_n(z)=\frac{1}{2\pi i}\int_{\Gamma_\rho}\frac{\ell_{n+1}(z)}{\ell_{n+1}(t)}\frac{r^{(l)}_2(t)}{t-z}\,\mathrm{d}t,\quad \ell_{n+1}(z)=\prod_{k=0}^n(z-z_k),\quad z\in S_\beta.
\end{align*}
This, together with the relation $\min_{t\in \Gamma_\rho,\,z\in S_\beta}|t-z|={\bf d}{\cal X}(\beta)$,   yields the following estimate
\begin{align*}
\|r^{(l)}_2-q^{(l)}_n\|_{C(S_\beta)}\le \frac{s(\Gamma_\rho)\|r^{(l)}_2\|_{C(\Omega_\rho)}}{2\pi\min_{t\in \Gamma_\rho,\,z\in S_\beta}|t-z|}\frac{\|\ell_{n+1}\|_{C(S_\beta)}}{\min_{{t\in \Gamma_\rho}|\ell_{n+1}(t)|}}\le \frac{3\|r^{(l)}_2\|_{C(\Omega_\rho)}}{2\mathbf{d}{\cal X}(\beta)}\frac{\|\ell_{n+1}\|_{C(S_\beta)}}{\min_{{t\in \Gamma_\rho}|\ell_{n+1}(t)|}},
\end{align*}
where $\Gamma_\rho$ is the boundary of $\Omega_\rho$, $s(\Gamma_\rho)$ is the length of $\Gamma_\rho$ satisfying $s(\Gamma_\rho)\le 2\pi(1+{\bf d})\le 3\pi$ and $\{z_k\}_{k=0}^n$ are the $n+1$ Fekete points on $\partial S_\beta$,
then following Levin and Saff \cite[(7)-(9)]{LevinSaff} and Taylor and Totik \cite[p. 469]{TaylorTotik2010}, there exists a $\rho>1$ and polynomial $q^{(l)}_n$ of degree $n$ such that
\begin{align}\label{Runge12}
\limsup_{n\rightarrow \infty}\|r^{(l)}_2-q^{(l)}_n\|_{C(S_\beta)}^{1/n}\le \limsup_{n\rightarrow \infty}\left(\frac{3}{2{\bf d}{\cal X}(\beta)
}\max_{z\in \Omega_\rho}\left|r^{(l)}_2\right|\right)^{1/n}\frac{1}{\rho}\le \limsup_{n\rightarrow \infty}\|r^{(l)}_2\|_{C(\Omega_\rho)}^{1/n}\frac{1}{\rho}.
\end{align}

In addition, from  the definition of limsup and \eqref{Runge12}, for fixed $\rho_0$ satisfying $1<\rho_0<\rho$ there is an positive integer $n_0$ such that
$$
\|r^{(l)}_2-q^{(l)}_n\|_{C(S_\beta)}\le \|r^{(l)}_2\|_{C(\Omega_\rho)}\frac{1}{\rho_0^n}
$$
for $n\ge n_0$, where $n_0$, $\rho$ and $\rho_0$ are independent of $T$ and $r^{(l)}_2$.
Analogous to \cite[p. 5]{Herremans2023} there is a polynomial $q^{(l)}_{N_2}(z)$ with
$N_2\ge\frac{\sigma\alpha}{\log{\rho_0}}\sqrt{N_1}=\mathcal{O}(\sqrt{N_1})\ge m$
such that $\rho_0^{-N_2}\le e^{-\sigma\alpha\sqrt{N_1}}=e^{-T}$ for $z\in S_\beta$ and then by \eqref{Runge12}
\begin{align}\label{polynomial app}
\bigg|E^{(l)}_{PA}(z)\bigg|=\bigg|r^{(l)}_2(z)-q^{(l)}_{N_2}(z)\bigg|
\le \frac{3\kappa^{l+1} T^l+l T}{{\cal X}(\beta)C^{m+1-\alpha}}\aleph^me^{-T}.
\end{align}
Consequently, by \eqref{eq:ECrat1C} and denoting
$P^{(l)}_{N_2}(z)=q^{(l)}_{N_2}(z)+P^{(l)}_m(z)$ with $N_2\ge m$, we obtain
\begin{align}\label{app_for_rNt}
r^{(l)}_{N_t}(z)=r^{(l)}_{N_1}(z)+P^{(l)}_{N_2}(z)+E^{(l)}_{PA}(z).
\end{align}

\subsection{LP approximation for $z^{\alpha}$ with rigorous error representations}\label{subsec31}

By combining
\eqref{eq:zalpha}, \eqref{eq:intC1}, \eqref{polynomial app} and \eqref{app_for_rNt} with $m=\lfloor\alpha\rfloor$, the LP approximation for $z^{\alpha}$ with $z\in S_{\beta}$ is established as
\begin{align}\label{ratappforzalpha}
  z^{\alpha}
  =&\frac{\sin(\alpha\pi)}{(-1)^{m}\alpha\pi}
  \left[r^{(0)}_{N_t}(z)+E_Q^{(0)}(z)+E^{(0)}_T(z)\right]
  +z\mathcal{L}[z^{\alpha-1};s_1,\ldots,s_m]\notag\\
  =&\frac{\sin(\alpha\pi)}{(-1)^{m}\alpha\pi}
  \left[r^{(0)}_{N_1}(z)+P^{(0)}_{N_2}(z)+E^{(0)}_{PA}(z)+E_Q^{(0)}(z)+E^{(0)}_T(z)\right]+z\mathcal{L}[z^{\alpha-1};s_1,\ldots,s_m] \\
  =&r_{N_1}(z)+P_{N_2}(z)+E(z):=r_{N}(z)+E(z)\notag
\end{align}
with
\begin{align*}
r_{N_1}(z)=&\sum_{j=1}^{N_1}\frac{a_j}{z-p_j},\ a_j=\frac{hp_j|p_j|^\alpha\sin(\alpha\pi)}{2\alpha\pi \sqrt{jh}},\ 1\le j\le N_1,\\
P_{N_2}(z) =&\frac{\sin(\alpha\pi)}{(-1)^{m}\alpha\pi}P^{(0)}_{N_2}(z)+z\mathcal{L}[z^{\alpha-1};s_1,\ldots,s_m],\\
E(z)=&\frac{\sin(\alpha\pi)}{(-1)^{m}\alpha\pi}\left[
E^{(0)}_T(z)+E_Q^{(0)}(z)+E^{(0)}_{PA}(z)\right],
\end{align*}
where $P_{N_2}(z)$ is a polynomial of degree $N_2=\mathcal{O}(\sqrt{N_1})$ (provided $N_2\ge m$) and $E(z)$ satisfies by \eqref{eq:211}, \eqref{eq:212}, \eqref{boundforEl(z)} and \eqref{polynomial app} that
\begin{align}\label{boundforEbar}
\left|E(z)\right|
\le&\frac{|\sin(\alpha\pi)|}{\alpha\pi}\bigg[
\frac{2\mathbb{T}_{m,\beta}C^{\alpha}}{{\cal X}(\beta)e^T}\left(\frac{1}{\delta^{m}}+\frac{\kappa }{C^{m+1}}\right)
+\frac{3\aleph^m\kappa e^{-T}}{C^{m+1-\alpha}{\cal X}(\beta)}
+\left|E_Q^{(0)}(z)\right|\bigg]\notag\\
\le&\frac{2|\sin(\alpha\pi)|\aleph^mC^{\alpha}}{\alpha\pi{\cal X}(\beta)e^T}
+\frac{|\sin(\alpha\pi)|\left[2 \aleph^m+3\aleph^m\right]}{(1-(\alpha))\pi C^{1-(\alpha)}{\cal X}(\beta)e^{T}}
+\frac{|\sin(\alpha\pi)|}{\alpha\pi}\left|E_Q^{(0)}(z)\right|,
\end{align}
where $\frac{|\sin(\alpha\pi)|}{\alpha\pi}$ and $\frac{|\sin(\alpha\pi)|}{(1-(\alpha))\pi}$ are uniformly bounded for all $\alpha>0$ from \eqref{eq:31}.

\subsection{LP approximation for $z^{\alpha}\log {z}$  with rigorous error representations}\label{subsec32}
 Analogously, from \eqref{eq:cint_log_gener_substituting}, \eqref{eq:intC1}, \eqref{polynomial app} and \eqref{app_for_rNt} with $m=\lceil\alpha\rceil$ we also establish the LP approximation for $z^{\alpha}\log{z}$ with $z\in S_{\beta}$
\begin{align}\label{ratappforzalphalog}
  z^{\alpha}\log{z}
  =&\frac{\sin(\alpha\pi)}{(-1)^m\alpha^2\pi}
  \left[r^{(1)}_{N_t}(z)+E_Q^{(1)}(z)+E^{(1)}_T(z)\right]
  +\bigg[\frac{\sin(\alpha\pi)\log{C}}{(-1)^m\alpha\pi}
  +\frac{\cos(\alpha\pi)}{(-1)^{m}\alpha}\bigg]\notag\\
  &\cdot\Big[r^{(0)}_{N_t}(z)+E_Q^{(0)}(z)+E^{(0)}_T(z)\Big]+z\mathcal{L}[z^{\alpha-1}\log{z};s_1,\ldots,s_m]\\
  =&\frac{\sin(\alpha\pi)}{(-1)^m\alpha^2\pi}
  \left[r^{(1)}_{N_1}(z)+P^{(1)}_{N_2}(z)+E^{(1)}_{PA}(z)+E_Q^{(1)}(z)+E^{(1)}_T(z)\right]\notag\\
  &+\left[\frac{\sin(\alpha\pi)\log{C}}{(-1)^m\alpha\pi}+\frac{\cos(\alpha\pi)}{(-1)^{m}\alpha}\right]
  \Big[r^{(0)}_{N_1}(z)+P^{(0)}_{N_2}(z)+E^{(0)}_{PA}(z)+E_Q^{(0)}(z)+E^{(0)}_T(z)\Big]\notag\\
  &+z\mathcal{L}[z^{\alpha-1}\log{z};s_1,\ldots,s_m]\notag\\
  =&\widetilde{r}_{N_1}(z)+\widetilde{P}_{N_2}(z)+\widetilde{E}(z):=\widetilde{r}_{N}(z)+\widetilde{E}(z)\notag
\end{align}
with
\begin{align*}
&\widetilde{r}_{N_1}(z)=\sum_{j=1}^{N_1}\frac{\widetilde{a}_j}{z-p_j},\\ &\widetilde{a}_j=\frac{hp_j|p_j|^\alpha(\sqrt{jh}-T)}{ 2\sqrt{jh}}\left\{\frac{\sin(\alpha\pi)}{\alpha^2\pi}+
\frac{\sin(\alpha\pi)\log{C}}{\alpha\pi}
+\frac{\cos(\alpha\pi)}{\alpha}\right\}\notag
\end{align*}
for $1\le j\le N_1$ according to Cauchy's residue theorem, and
\begin{align*}
\widetilde{P}_{N_2}(z) =&\frac{\sin(\alpha\pi)}{(-1)^{m}\alpha^2\pi}P^{(1)}_{N_2}(z)
+\left[\frac{\sin(\alpha\pi)\log{C}}{(-1)^m\alpha\pi}
+\frac{\cos(\alpha\pi)}{(-1)^{m}\alpha}\right]P^{(0)}_{N_2}(z)\notag\\
&+z\mathcal{L}[z^{\alpha-1}\log{z};s_1,\ldots,s_m],\\
\widetilde{E}(z)=&\frac{\sin(\alpha\pi)}{(-1)^{m}\alpha^2\pi}\left[
E^{(1)}_T(z)+E_Q^{(1)}(z)+E^{(1)}_{PA}(z)\right]\notag\\
&+\left[\frac{\sin(\alpha\pi)\log{C}}{(-1)^m\alpha\pi}
+\frac{\cos(\alpha\pi)}{(-1)^{m}\alpha}\right]
\left[E^{(0)}_T(z)+E_Q^{(0)}(z)+E^{(0)}_{PA}(z)\right],
\end{align*}
where $\widetilde{P}_{N_2}(z)$ is a polynomial of degree $N_2$ and $\widetilde{E}(z)$ satisfies by \eqref{boundforEl(z)}, \eqref{polynomial app} and $\kappa=\frac{\alpha}{\lceil\alpha\rceil+1-\alpha}$ that
\begin{align}\label{boundforwidetildeE}
&\left|\widetilde{E}(z)\right|
\le\frac{|\sin(\alpha\pi)|}{\alpha^2\pi}\bigg[
\frac{2(1+T)\aleph^mC^{\alpha}}{{\cal X}(\beta)e^{T}}
\left(1+\frac{\kappa^2}{C^{m+1}}\right)
+\frac{(3\kappa^2 T+ T)\aleph^{m}}{C^{m+1-\alpha}{\cal X}(\beta)}e^{-T}+\left|E_Q^{(1)}(z)\right|\bigg]\notag\\
&+\left|\frac{\sin(\alpha\pi)\log{C}}{(-1)^m\alpha\pi}
+\frac{\cos(\alpha\pi)}{(-1)^{m}\alpha}\right|
\bigg[
\frac{2\aleph^mC^{\alpha}}{{\cal X}(\beta)e^T}\left(1+\frac{\kappa }{C^{m+1}}\right)+\frac{3\kappa\aleph^me^{-T}}{C^{m+1-\alpha}{\cal X}(\beta)}
+\left|E_Q^{(0)}(z)\right|\bigg].
\end{align}

\begin{remark}
Given the results obtained in \eqref{ratappforzalpha}, \eqref{boundforEbar}, \eqref{ratappforzalphalog}, and \eqref{boundforwidetildeE}, proving Theorems \ref{mainthm} and \ref{mainthm2} can be reduced to bounding the uniform quadrature errors $E^{(l)}_Q(z)$ ($z\in S_\beta$, $l=0,1$) by means of Poisson's summation formula.
\end{remark}


\section{Poisson's summation formula and Cauchy's integral theorem}\label{sec:3}
Define for $l=0,1$ that
\begin{subequations}
\begin{align}
f^{(l)}(u,z)=&\frac{1}{2 \sqrt{u}}\frac{zC^{\alpha}(\sqrt{u}-T)^le^{\sqrt{u}-T}}
{Ce^{\frac{1}{\alpha}(\sqrt{u}-T)}+z}
\left(\prod_{k=1}^{m}\frac{z-s_k}{Ce^{\frac{1}{\alpha}(\sqrt{u}-T)}+s_k}\right),\label{eq:func}\\
I^{(l)}(z)=&\int_0^{N_t h}f^{(l)}(u,z)\mathrm{d}u,\quad T=\sqrt{N_1h}=\sigma\alpha\sqrt{N_1},\label{eq:quadraturec}\\
E^{(l)}_Q(z)=&I^{(l)}(z)-h\sum_{k=1}^{N_t}f^{(l)}(kh,z)=I^{(l)}(z)- r^{(l)}_{N_t}(z),\quad h=\sigma^2\alpha^2.\label{eq:quadraqll}
\end{align}
\end{subequations}

To derive the convergence rate for the composite rectangular rule in \eqref{eq:quadraqll}, we apply Poisson's summation formula (cf. \cite[(10.6-21)]{Henrici}, \cite[Theorem 1.3.1]{Stenger}, \cite{Trefethen2014SIREV,XYW2025}) along with the decay asymptotics of discrete Fourier transforms.

\begin{theorem}\cite[Theorem 1.3.1]{Stenger}\label{StengerPossionFormula}
Let $w\in L^2(\mathbb{R})$ and let $w$ and its Fourier transform
$\mathfrak{F}[w](\xi)=\int_{-\infty}^{\infty}w(u)e^{-2\pi iu\xi}\mathrm{d}u$ for $\xi$ and $u$ in $\mathbb{R}$, satisfy the
conditions
$$
w(u)=\lim_{t\rightarrow 0^+}\frac{w(u-t)+w(u+t)}{2},\quad \mathfrak{F}[w](\xi)=\lim_{t\rightarrow 0^+}\frac{\mathfrak{F}[w](\xi-t)+\mathfrak{F}[w](\xi+t)}{2}.
$$
Then, for all $h> 0$,
\begin{align}\label{Possionsummationformula}
h\sum_{n=-\infty}^{+\infty}w(nh)e^{2\pi inh u}= \sum_{n=-\infty}^{+\infty}\mathfrak{F}[w]\left(\frac{n}{h}+u\right).
\end{align}
\end{theorem}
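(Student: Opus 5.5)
The plan is to prove \eqref{Possionsummationformula} by \emph{periodization}: view the right-hand side as a $1/h$-periodic function of $u$, compute its Fourier coefficients, and recognize them as the samples $w(nh)$. Fix $h>0$ and define
\begin{align*}
G(u)=\sum_{n=-\infty}^{+\infty}\mathfrak{F}[w]\left(\frac{n}{h}+u\right),
\end{align*}
which is formally periodic in $u$ with period $1/h$. If the sum defining $G$ converges absolutely on a period, I would compute its Fourier coefficients on $[0,1/h]$ by exchanging sum and integral (Fubini). This unfolds the periodic integral into an integral over all of $\mathbb{R}$:
\begin{align*}
h\int_0^{1/h}G(u)e^{-2\pi i k h u}\,\mathrm{d}u=h\int_{-\infty}^{+\infty}\mathfrak{F}[w](\xi)e^{-2\pi i k h\xi}\,\mathrm{d}\xi .
\end{align*}
By the Fourier inversion theorem (Plancherel in $L^2$, pointwise at points where the symmetric-average condition on $w$ holds), the right-hand side equals $h\,w(\mp kh)$; the sign depends on the convention in $\mathfrak{F}$. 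The Fourier series of $G$ is therefore $h\sum_{n}w(nh)e^{\pm 2\pi i nhu}$. Relabelling $n\to -n$ aligns this with the left-hand side of \eqref{Possionsummationformula}, provided the sign convention of the transform is matched to that of the exponential in \eqref{Possionsummationformula}. I would check this sign carefully, since in the paper's later use the identity is applied essentially at $u=0$, where the sign is immaterial.

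The remaining step is to upgrade ``$G$ has this Fourier series'' to pointwise equality. Here I would use the two normalization hypotheses. First, the midpoint condition on $\mathfrak{F}[w]$ makes $G(u)$ equal to the average of its one-sided limits. Second, by the Dirichlet--Jordan (or Fej\'er) theorem, the symmetric partial sums $\sum_{|n|\le M}$ of the Fourier series of $G$ converge to exactly that average wherever $G$ is of locally bounded variation; under Fej\'er summation it suffices that the one-sided limits exist. Similarly, the midpoint condition on $w$ guarantees that the samples $w(nh)$ coincide with the values produced by the inversion formula at jump points. To keep the argument clean I would interpret both sums in \eqref{Possionsummationformula} as symmetric limits $\lim_{M\to\infty}\sum_{|n|\le M}$.

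I expect the main obstacle to be justifying the interchange and the convergence under the bare hypothesis $w\in L^2(\mathbb{R})$. For a general $L^2$ function the periodization of $\mathfrak{F}[w]$ need not converge, so the statement must be read as holding whenever the series converge. My first attempt would be to prove the result for $w$ with $w,\mathfrak{F}[w]\in L^1\cap L^2$ and of bounded variation. I would then pass to the general case by a mollification/density argument, using Abel summation of the symmetric partial sums to control the limit. This is precisely the regime in which the paper applies the formula: the truncated integrands $f^{(l)}(u,z)$ defined in \eqref{eq:func} (extended by zero outside $[0,N_th]$) are piecewise smooth with finitely many jumps and compact support. For such functions the Fourier transforms decay like $1/|\xi|$, so the symmetric sums converge and the midpoint conditions hold automatically at the endpoints $0$ and $N_th$.
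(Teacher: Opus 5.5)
The paper gives no proof of this statement; it is quoted from Stenger and Henrici, and your periodization computation is the standard argument for such results. As a proof of the statement \emph{as printed}, however, your plan has two genuine gaps.

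\textbf{The sign.} With the convention fixed in the statement, $\int\mathfrak{F}[w](\xi)e^{-2\pi i kh\xi}\,\mathrm{d}\xi=w(-kh)$, so your computation gives $G(u)=h\sum_n w(nh)e^{-2\pi i nhu}$. Relabelling $n\to-n$ cannot turn this into the printed $e^{+2\pi i nhu}$, because it flips the argument of $w$ as well. For non-even $w$ (a shifted Gaussian, say) the printed identity is false. It holds only after replacing $u$ by $-u$ on one side; the printed sign belongs to a transform with kernel $e^{+2\pi i u\xi}$. You should state and prove the corrected identity rather than assert alignment. This does no harm in the paper, since the formula is used only at $u=0$, in \eqref{QuadratureErrorfor_w}, and only for the even function $\bar f^{(l)}$.

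\textbf{The hypotheses.} $w\in L^2(\mathbb{R})$ plus the two pointwise midpoint conditions do not imply the conclusion, so your closing ``mollification/density with Abel summation'' step cannot succeed. The identity is false in that generality, even if read as holding ``whenever both series converge''. There are classical examples (Katznelson) of continuous $w\in L^1$ with $\mathfrak{F}[w]\in L^1$, so all stated hypotheses hold, for which both sides converge absolutely yet differ.

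Your argument tacitly uses more than is assumed at four places:
\begin{enumerate}
\item The Fubini unfolding needs $\mathfrak{F}[w]\in L^1$.
\item Pointwise inversion at $kh$ is not supplied by Plancherel, which gives only a.e.\ equality. It needs, e.g., $\mathfrak{F}[w]\in L^1$; the midpoint condition then pins $w$ to its continuous representative.
\item Transferring the midpoint condition from $\mathfrak{F}[w]$ to $G$ interchanges $\lim_{t\to0^+}$ with $\sum_n$. That requires locally uniform convergence of the periodization.
\item Dirichlet--Jordan needs $G$ of bounded variation. Bounded variation of $w$ does not give this.
\end{enumerate}

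The repair is to strengthen the hypotheses and drop the density step. On your route, take $\mathfrak{F}[w]\in L^1$ of bounded variation. Alternatively, periodize $w$ instead and take $w\in L^1$ of bounded variation.

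\textbf{How the paper uses the formula.} Your description of the application is also inaccurate. The paper does not truncate $f^{(l)}$ to $[0,N_th]$ and extend by zero. It uses the continuous, even, non-compactly-supported extension \eqref{eq:extension_of_fnew11110} and treats the tail beyond $N_th$ separately in \eqref{eq:quaderror1}. In effect it takes the dual route:
\begin{enumerate}
\item it periodizes $\bar f^{(l)}$ in \eqref{eq:per0};
\item it identifies $c_n=\frac1h\mathfrak{F}[\bar f^{(l)}](n/h)$ in \eqref{eq:fourier_c0};
\item its contour bounds \eqref{Poissonsum10} give $\sum_n|c_n|<\infty$.
\end{enumerate}
Continuity of $F$ together with absolutely summable Fourier coefficients then yields the $u=0$ identity directly.
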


From \eqref{Possionsummationformula} with $u=0$ and by $\mathfrak{F}[w]\left(0\right)=\int_{-\infty}^{+\infty}w(u)\mathrm{d}u$, it follows
\begin{align}\label{QuadratureErrorfor_w}
E^{w}_{Q}:=\int_{-\infty}^{+\infty}w(u)\mathrm{d}u
-h \sum_{j=-\infty}^{+\infty}w(jh)
=-\sum_{n\not=0}\mathfrak{F}[w]\left(\frac{n}{h}\right).
\end{align}

\bigskip
As previously mentioned, by applying the Paley-Wiener theorem to a horizontal strip \cite{XYW2025}, the optimal quadrature errors for uniformly exponentially clustered poles \eqref{eq:uniform0} have been directly established as $\mathcal{O}(e^{-\pi\sqrt{(2-\beta)N\alpha}})$ for $z^{\alpha}$ and $\mathcal{O}(\sqrt{N}e^{-\pi\sqrt{(2-\beta)N\alpha}})$ for $z^{\alpha}\log z$, based on the optimal choice of the clustered parameter $\sigma=\frac{\pi\sqrt{2-\beta}}{\sqrt{\alpha}}$ \cite{XYW2025}, where
   the associated integrand is analytic in a horizontal strip and belongs to
$L^2(\mathbb{R})$. However, it is important to note that while the integrand $f^{(l)}(u,z)$ is well-defined for $u>0$, it fails to belong to $L^2(\mathbb{R_+})$ for any fixed $z\in S_\beta\setminus\{0\}$. Furthermore, $f^{(l)}(u,z)$  exhibits a branch singularity at $u=0$.

\bigskip
To remedy this, we introduce a smooth extension of $f^{(l)}$
defined by
 \begin{align}\label{eq:extension_of_fnew11110}
{\bar f}^{(l)}(u,z)=
\begin{cases}
f^{(l)}(u,z), & u\ge \hbar,\\
f^{(l)}(\hbar,z), & -\hbar\le u\le \hbar,\\
f^{(l)}(-u,z), & u\le-\hbar,
\end{cases}
\end{align}
where $\hbar>0$ is an integer multiple of $h$. This construction bridges the gap in regularity and ensures square-integrability near the origin.

Consequently, for any fixed $z\in S_\beta$, the function ${\bar f}^{(l)}$
is continuous and piecewise smooth on $\mathbb{R}$. Furthermore, in conjunction with the bounds established in \eqref{ine:minus_u} and \eqref{ieq:positive_u}, it can be shown that
$\bar{f}^{(l)}(u,z)\in L^2(\mathbb{R})$ via the change of variables $t=\sqrt{u}-T$ provided $T\ge 1$
\begin{align*}
&\int_{-\infty}^{+\infty}|{\bar f}^{(l)}(u,z)|^2\,\mathrm{d}u=2\hbar|f^{(l)}(\hbar,z)|^2+2\int_{\hbar}^{+\infty}|f^{(l)}(u,z)|^2\,\mathrm{d}u\notag\\
\le&2\hbar |f^{(l)}(\hbar,z)|^2+2\int_{\hbar}^{(\kappa+1)^2T}|f^{(l)}(u,z)|^2\,\mathrm{d}u+\int_{\kappa T}^{+\infty}
\frac{1}{t+T}\left|\frac{zC^{\alpha}t^l e^t}{Ce^{\frac{1}{\alpha}t}+z}
\left(\prod\limits_{k=1}^{m}\frac{z-s_k}{Ce^{\frac{1}{\alpha}t}+s_k}\right)\right|^2\,\mathrm{d}t\\
\le& 2\hbar|f^{(l)}(\hbar,z)|^2+2\int_{\hbar}^{(\kappa+1)^2T}|f^{(l)}(u,z)|^2\,\mathrm{d}u+\frac{2\mathbb{T}_{m,\beta}^2{\cal X}(\beta)^{-2}}{C^{2m+2-2\alpha}} \int_0^{+\infty}t^{2l}e^{-\frac{2}{\kappa}t}\,\mathrm{d}t<+\infty.\notag
\end{align*}
Furthermore, there exists a positive constant $M$ such that, uniformly for all $\xi\in \mathbb{R}$,
\begin{align}\label{eq:fouriertran20}
\int_{-\infty}^{+\infty}|{\bar f}^{(l)}(u,z)e^{-2i\pi u\xi}|\,\mathrm{d}u
\le&2\hbar|f^{(l)}(\hbar,z)|+2\int_{\hbar}^{+\infty}|f^{(l)}(u,z)|\,\mathrm{d}u\notag\\
=&2\hbar|f^{(l)}(\hbar,z)|+2\int_{\sqrt{\hbar}-T}^{+\infty}\left|\frac{zC^{\alpha}t^l e^t}{Ce^{\frac{1}{\alpha}t}+z}
\left(\prod\limits_{k=1}^{m}\frac{z-s_k}{Ce^{\frac{1}{\alpha}t}+s_k}\right)\right|\,\mathrm{d}t\\
\le& 2\hbar|f^{(l)}(\hbar,z)|+\frac{2\mathbb{T}_{m,\beta}}{\delta^{m}}\frac{C^{\alpha}}{{\cal X}(\beta)}
\int_{-\infty}^0|t|^{l}e^{t}\,\mathrm{d}t\notag\\
&+\frac{2\mathbb{T}_{m,\beta}}{C^{m+1-\alpha}{\cal X}(\beta)} \int_0^{+\infty}t^{l}e^{-\frac{1}{\kappa}t}\,\mathrm{d}t<M.\notag
\end{align}
This uniform bound \eqref{eq:fouriertran20} implies that the Fourier transform $\mathfrak{F}[{\bar f}^{(l)}](\xi)$ is continuous on $\mathbb{R}$. Hence,
${\bar f}^{(l)}(u,z)$ and $\mathfrak{F}[{\bar f}^{(l)}](\xi)$ satisfy the conditions in  \cite[Theorem 1.3.1]{Stenger} and \cite[(10.6-12)]{Henrici}.

Based on the above, we define the $h$-periodic function in $v$ as
\begin{equation}\label{eq:per0}
F(v,z)=\sum_{k=-\infty}^{\infty}{\bar f}^{(l)}(kh+v,z), \quad v\in[0,h].
\end{equation}
The uniform convergence of this series can be readily verified from the definitions of  $f^{(l)}$ and ${\bar f}^{(l)}$, in conjunction with \eqref{ieq:positive_u}.
In accordance with \cite[p. 270]{Henrici} and the uniform convergence \eqref{eq:per0} on $v$, it follows that the $n$-th Fourier coefficient of $F(v,z)$ for $n\not=0$ satisfies
\allowdisplaybreaks[4]
\begin{align}\label{eq:fourier_c0}
c_n=&\frac{1}{h}\mathfrak{F}[{\bar f}^{(l)}]\big{(}\frac{n}{h}\big{)}
=\frac{1}{h}\int_0^{h}F(v,z)e^{-i\frac{2n\pi}{h}v}\,\mathrm{d}v =\frac{1}{h}
\int_{0}^{h}\sum_{k=-\infty}^{\infty}{\bar f}^{(l)}(kh+v,z)e^{-i\frac{2n\pi}{h}v}\,\mathrm{d}v\notag\\
=&\frac{1}{h}\sum_{k=-\infty}^{\infty}
\int_{kh}^{(k+1)h}{\bar f}^{(l)}(u,z)e^{-i\frac{2n\pi}{h}u}\,\mathrm{d}u\\
=&\frac{1}{h}
\int_{\hbar}^{+\infty} f^{(l)}(u,z)e^{-i\frac{2n\pi}{h}u}du+\frac{1}{h}
\int_{\hbar}^{+\infty} f^{(l)}(u,z)e^{i\frac{2n\pi}{h}u}\,\mathrm{d}u
\notag\\
&+\frac{2}{h}\int_{0}^{\hbar}f^{(l)}(\hbar,z)\cos{\big{(}\frac{2n\pi}{h}u\big{)}}\,\mathrm{d}u\notag\\
=&\frac{1}{h}
\int_{\hbar}^{+\infty} f^{(l)}(u,z)e^{-i\frac{2n\pi}{h}u}\,\mathrm{d}u+\frac{1}{h}
\int_{\hbar}^{+\infty} f^{(l)}(u,z)e^{i\frac{2n\pi}{h}u}\,\mathrm{d}u,\notag
\end{align}
where the term $\frac{2}{h}\int_{0}^{\hbar}f^{(l)}(\hbar,z)\cos{\big{(}\frac{2n\pi}{h}u\big{)}}\,\mathrm{d}u$
vanishes because $\hbar>0$ is an integer multiple of $h$;  consequently
 $$\frac{2}{h}\int_{0}^{\hbar}f^{(l)}(\hbar,z)\cos{\big{(}\frac{2n\pi}{h}u\big{)}}\,\mathrm{d}u=\frac{2 f^{(l)}(\hbar,z)}{h}\int_{0}^{\hbar}\cos{\big{(}\frac{2n\pi}{h}u\big{)}}\,\mathrm{d}u=0.$$

\bigskip
Directly analyzing the asymptotics of $\mathfrak{F}[ f^{(l)}]
\big(\frac{n}{h}\big)$ is quite challenging. Instead, we switch to a generalized Fourier transform. Notice that the integrals in the last line of \eqref{eq:fourier_c0} can be rewritten as
\begin{align}\label{eq:fourier_c10}
&\frac{1}{h}
\int_{\hbar}^{+\infty} f^{(l)}(u,z)e^{\pm i\frac{2n\pi}{h}u}\,\mathrm{d}u\notag\\
=&\frac{1}{h}
\int_{\hbar}^{+\infty}\frac{1}{2 \sqrt{u}}
\frac{zC^{\alpha}(\sqrt{u}-T)^le^{\sqrt{u}-T}}
{Ce^{\frac{1}{\alpha}(\sqrt{u}-T)}+z}
\left(\prod_{k=1}^{m}\frac{z-s_k}{Ce^{\frac{1}{\alpha}(\sqrt{u}-T)}+s_k}\right)e^{\pm i\frac{2n\pi}{h}u}\,\mathrm{d}u\\
=&\frac{1}{h}
\int_{\sqrt{\hbar}}^{+\infty}
\frac{zC^{\alpha}(y-T)^le^{y-T}}
{Ce^{\frac{1}{\alpha}(y-T)}+z}
\left(\prod_{k=1}^{m}\frac{z-s_k}{Ce^{\frac{1}{\alpha}(y-T)}+s_k}\right)e^{\pm i\frac{2n\pi}{h}y^2}\,\mathrm{d}y\notag\\
:=&\frac{1}{h}
\int_{\sqrt{\hbar}}^{+\infty}f^{(l,*)}(y,z)e^{\pm i\frac{2n\pi}{h}y^2}\,\mathrm{d}y\notag
\end{align}
where \begin{align}\label{eq:uniffun}
f^{(l,*)}(y,z)=\frac{zC^{\alpha}(y-T)^le^{y-T}}
{Ce^{\frac{1}{\alpha}(y-T)}+z}
\left(\prod_{k=1}^{\ell}\frac{z-s_k}{Ce^{\frac{1}{\alpha}(y-T)}+s_k}\right).
\end{align}
Combining \eqref{eq:fourier_c0} with the last identity in \eqref{eq:fourier_c10}, we can obtain delicate estimates for the discrete Fourier transform $\mathfrak{F}[ \overline{f}^{(l)}]
\big(\frac{n}{h}\big)$
\begin{align}\label{eq:fourier_cpath00}
\mathfrak{F}[ \overline{f}^{(l)}]
\big(\frac{n}{h}\big)=hc_n=&
\int_{\sqrt{\hbar}}^{+\infty}
\frac{zC^{\alpha}(y-T)^le^{y-T}}
{Ce^{\frac{1}{\alpha}(y-T)}+z}
\left(\prod_{k=1}^{m}\frac{z-s_k}{Ce^{\frac{1}{\alpha}(y-T)}+s_k}\right)e^{-i\frac{2n\pi}{h}y^2}\,\mathrm{d}y\notag\\
&+\int_{\sqrt{\hbar}}^{+\infty}
\frac{zC^{\alpha}(y-T)^le^{y-T}}
{Ce^{\frac{1}{\alpha}(y-T)}+z}
\left(\prod_{k=1}^{m}\frac{z-s_k}{Ce^{\frac{1}{\alpha}(y-T)}+s_k}\right)e^{ i\frac{2n\pi}{h}y^2}\,\mathrm{d}y
\end{align}
via Cauchy's integral theorem and the residue theorem. These estimates, together with Poisson's summation formula \eqref{QuadratureErrorfor_w}, finally yield the quadrature error in \eqref{eq:quadraqll}.

\bigskip
Below, we examine in detail the analytic properties of the function $f^{(l,*)}$  defined in \eqref{eq:uniffun}.
It is worth noting that for any fixed nonzero $z\in S_\beta$, defined as either
$z^+=xe^{\frac{\theta\pi}{2}i}$ or $z^-=xe^{-\frac{\theta\pi}{2}i}$  with $0\le \theta\le \beta$, the condition $s_k\in (\delta,\delta+2\omega)=(1,2)$ ensures that $z^{\pm}\notin\{s_1,s_2,\ldots,s_m\}$. Consequently, the function
$f^{(l,*)}(u,z^{\pm})=f^{(l,*)}(u,xe^{i\frac{\pm\theta\pi}{2}})$ exhibits simple poles at
\begin{equation}\label{eq:all_poles_fux_C}
u_j(z^{\pm})=T+\alpha\log{\frac{x}{C}}
+i\alpha\pi\left(2j-1\pm\frac{\theta}{2}\right),\quad j=0,\pm1,\ldots,
\end{equation}
and at
\begin{equation}\label{eq:all_poles_fux_C_sl}
u_j(s_k)=T+\alpha\log{\frac{s_k}{C}}
+i\alpha\pi\left(2j-1\right),\quad j=0,\pm1,\ldots,\quad k=1,\ldots,m.
\end{equation}

Among the poles $\left\{u_j(z^{\pm})\right\}$, the two closest to the real axis are
\begin{align*}
&u_0(z^+)=T+\alpha\log{\frac{x}{C}}
-i\alpha\pi\left(1-\frac{\theta}{2}\right),\quad u_1(z^+)=T+\alpha\log{\frac{x}{C}}
+i\alpha\pi\left(1+\frac{\theta}{2}\right),\\
&u_0(z^-)=T+\alpha\log{\frac{x}{C}}
-i\alpha\pi\left(1+\frac{\theta}{2}\right),\quad u_1(z^-)=T+\alpha\log{\frac{x}{C}}
+i\alpha\pi\left(1-\frac{\theta}{2}\right).
\end{align*}
Similarly, in each family $\{u_j(s_k)\}$, the closest poles to the real line are $u_0(s_k)$ and $u_1(s_k)$, which are symmetrically located.

In particular, for sufficiently large $A>T>0$ and any fixed $c_0>0$, the following bound holds on the vertical segments $A\pm it$, $0\le t\le  c_0$ \cite[(5.13)]{XYW2025}
\begin{align}\label{path10}
\left|f^{(l,*)}(A\pm it,z)\right|
\le&\frac{|z||A\pm it-T|^l\left|C^{\alpha}e^{A\pm it-T}\right|}{\big|Ce^{\frac{1}{\alpha}(A\pm it-T)}+z\big|}
\left|\prod\limits_{k=1}^{m}\frac{z-s_k}{Ce^{\frac{1}{\alpha}(A\pm it-T)}+s_k}\right|\notag\\
\le&\frac{(\sqrt{(A-T)^2+t^2})^lC^{\alpha}e^{A-T}\mathbb{T}_{m,\beta}}{\left|Ce^{\frac{1}{\alpha}(A\pm it-T)}\right|-|z|}
\prod\limits_{k=1}^{m}\frac{1}{\left|Ce^{\frac{1}{\alpha}(A\pm it-T)}\right|-s_k}\\
\le&\frac{(\sqrt{(A-T)^2+t^2})^lC^{\alpha}e^{A-T}\mathbb{T}_{m,\beta}}{Ce^{\frac{1}{\alpha}(A-T)}-s_m}
\prod\limits_{k=1}^{m}\frac{1}{Ce^{\frac{1}{\alpha}(A-T)}-s_m}\notag\\
\le& \frac{(\sqrt{(A-T)^2+t^2})^lC^{\alpha}\mathbb{T}_{m,\beta}}{\Big[Ce^{\frac{A-T}{(m+1)\kappa}}-s_m e^{-\frac{A-T}{m+1}}\Big]^{m+1}}\notag
\end{align}
which tends to zero as $A\rightarrow +\infty$, independent of $t\in [0,c_0]$ and $z\in S_\beta$.

\bigskip
To achieve optimal convergence, we assume
\begin{align}\label{eq:onT}
T>\max\{\sqrt{h}+\alpha, \sqrt{h}+\alpha-\alpha\log{\frac{1}{C}}, \sigma\alpha\left\lceil \frac{\sigma+2}{\sigma} \right\rceil+\alpha-\alpha\log{\frac{1}{C}},1\}.
\end{align}
We then define $x^*=Ce^{\frac{1}{\alpha}(\sqrt{h}+\alpha-T)}\in (0,1)$. Additionally, we introduce
  $m_0$ and $\lambda_0$
  as the smallest positive integers such that
 $4m_0\alpha\pi^2\ge h$ and $\sqrt{\lambda_0h}\ge \sqrt{h}+2\alpha=(\sigma+2)\alpha$, respectively. Their explicit expressions are
 \begin{align}\label{eq:ML}
  m_0=\left\lceil \frac{h}{4\alpha\pi^2}\right\rceil=\left\lceil\frac{\sigma^2\alpha}{4\pi^2}\right\rceil,\quad
\lambda_0 = \left\lceil \frac{\sigma+2}{\sigma} \right\rceil^2.
\end{align}

\begin{lemma}\label{infty}Let $f^{(l,*)}$ be defined in \eqref{eq:uniffun} with $z\in S_{\beta}$ and $z\not=0$. Then
\begin{align}\label{eq:intestall}
\left| \int_{\sqrt{h}\pm 2im_0\alpha\pi}^{+\infty\pm 2im_0\alpha\pi} f^{(l,*)}(y,z)e^{\pm i\frac{2n\pi}{h}y^2} \,\mathrm{d}y\right|
\le \frac{\aleph^m(T+2m_0\alpha\pi)^l}
{n^2\sigma\alpha{\cal X}(\beta) C^{-\alpha}}e^{-T}+\frac{\kappa(2m_0\alpha\pi+\kappa)^l\aleph^m}
{{\cal X}(\beta)C^{m+1-\alpha}}e^{-2nT}.
\end{align}
\end{lemma}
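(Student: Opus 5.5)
Parametrize the horizontal line as $y=s\pm 2im_0\alpha\pi$ with $s\in[\sqrt{h},+\infty)$. The first step is to see what the shift does to the oscillatory factor. We have
\begin{align*}
e^{\pm i\frac{2n\pi}{h}y^2}=e^{\pm i\frac{2n\pi}{h}(s^2-4m_0^2\alpha^2\pi^2)}\,e^{-\frac{8nm_0\alpha\pi^2}{h}s}.
\end{align*}
Since $4m_0\alpha\pi^2\ge h$ by \eqref{eq:ML}, the modulus is at most $e^{-2ns}$. The sign of the imaginary shift is chosen to match the sign of the exponent, so this factor always decays.

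Next, the shift by $2m_0\alpha\pi i$ leaves $e^{\frac1\alpha(y-T)}$ unchanged, because $e^{2m_0\pi i}=1$. Hence $|f^{(l,*)}(s\pm2im_0\alpha\pi,z)|$ differs from $|f^{(l,*)}(s,z)|$ only through the factor $|y-T|^l\le (|s-T|+2m_0\alpha\pi)^l$. This lets us reuse the real-axis bounds \eqref{ine:minus_u} and \eqref{ieq:positive_u} with $t=s-T$. We then split the $s$-integral at $s=T$.

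For $s\le T$ (i.e.\ $t\le0$), \eqref{ine:minus_u} together with $\mathbb{T}_{m,\beta}/\delta^m\le\aleph^m$ gives an integrand bounded by
\begin{align*}
\frac{\aleph^mC^\alpha(T+2m_0\alpha\pi)^l}{{\cal X}(\beta)}\,e^{s-T}e^{-2ns}.
\end{align*}
Here I would avoid bounding the whole oscillation by its modulus. Instead I would integrate by parts once or twice using $\frac{d}{ds}e^{\pm i\frac{2n\pi}{h}y^2}=\pm i\frac{4n\pi}{h}y\,e^{\pm i\frac{2n\pi}{h}y^2}$. With $|y|\ge s\ge\sqrt h$ this produces the factor $\frac{h}{n^2\cdot}$ that explains the $\frac{1}{n^2\sigma\alpha}$ term, since $\sqrt h=\sigma\alpha$. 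The boundary term at $s=\sqrt h$ carries $e^{\sqrt h-T}$, and it is bounded by $e^{-T}$ times constants. This is presumably where the assumption \eqref{eq:onT} on $T$ and the choice of $x^*$ enter: they keep $|Ce^{\frac1\alpha(y-T)}+z|$ bounded below near $s=\sqrt h$, via \eqref{eq:est} which holds uniformly on this horizontal line. A cruder alternative is $\int_{\sqrt h}^T e^{s-T-2ns}\,ds\le e^{-T}e^{(1-2n)\sqrt h}/(2n-1)$. This already has the form $e^{-T}$ times something decaying in $n$, and $e^{-(2n-1)\sigma\alpha}/(2n-1)$ can be dominated by a multiple of $1/(n^2\sigma\alpha)$ using $xe^{-x}\le 1$.

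For $s\ge T$ (i.e.\ $t\ge0$), \eqref{ieq:positive_u} gives an integrand bounded by
\begin{align*}
\frac{\aleph^m(t+2m_0\alpha\pi)^l}{{\cal X}(\beta)C^{m+1-\alpha}}\,e^{-t/\kappa}e^{-2n(t+T)}.
\end{align*}
Integrating in $t$ gives at most $e^{-2nT}\int_0^\infty (t+2m_0\alpha\pi)^le^{-t/\kappa}\,dt=e^{-2nT}\,\kappa(2m_0\alpha\pi+\kappa)^l$. This is exactly the second term of \eqref{eq:intestall}.

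The main obstacle is the first piece: getting the clean factor $\frac{1}{n^2\sigma\alpha}$ uniformly in $z\in S_\beta$, including $z$ near $0$ and angles $\theta$ up to $\beta$. I would first try the crude exponential bound, and fall back on integration by parts only if the constants fail to match.
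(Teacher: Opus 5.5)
Your fallback argument is correct and is essentially the paper's proof. The paper likewise bounds the oscillatory factor by its modulus $e^{-\frac{8m_0n\alpha\pi^2}{h}s}$, reuses \eqref{ine:minus_u} and \eqref{ieq:positive_u} via the $2\alpha\pi i$-periodicity of $e^{\frac{1}{\alpha}(y-T)}$, splits at $s=T$, and turns the exponential decay in $n$ into $n^{-2}$ via $e^{x}>x$; it absorbs $e^{s}$ into half of the decay and uses $m_0\ge h/(4\pi^2\alpha)$, whereas your $e^{-(2n-1)\sigma\alpha}/(2n-1)\le 1/((2n-1)^2\sigma\alpha)\le 1/(n^2\sigma\alpha)$ gives the first term with constant exactly $1$, so the constants match. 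Integration by parts is unnecessary, and neither \eqref{eq:onT} nor $x^*$ enters here. The obstacle you flag does not arise, because \eqref{eq:est} already makes \eqref{ine:minus_u} uniform over all $z\in S_\beta\setminus\{0\}$.
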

\begin{proof}
Note that
\begin{align}\label{eq:intestimate1}
&\left|\int_{\sqrt{h}-2im_0\alpha\pi}^{+\infty-2im_0\alpha\pi}f^{(l,*)}(y,z)e^{- i\frac{2n\pi}{h}y^2} \,\mathrm{d}y\right|
=\left|\int_{\sqrt{h}}^{+\infty}f^{(l,*)}(t-2im_0\alpha\pi,z)e^{-i\frac{2n\pi}{h}(t-2im_0\alpha\pi)^2}
\,\mathrm{d}t\right|\notag\\
&\le\int_{\sqrt{h}}^{+\infty}\left|\frac{zC^{\alpha}(t-2im_0\alpha\pi-T)^le^{t-2im_0\alpha\pi-T}}
{Ce^{\frac{1}{\alpha}(t-2im_0\alpha\pi-T)}+z}\right|\cdot\left|\prod_{k=1}^{m}\frac{z-s_k}{Ce^{\frac{1}{\alpha}(t-2im_0\alpha\pi-T)}+s_k}\right| e^{-\frac{8m_0n\alpha\pi^2}{h}t}\,\mathrm{d}t\\
&\le\left(\int_{\sqrt{h}}^{T}+\int_{T}^{+\infty}\right)\frac{|z|C^{\alpha}(|t-T|+2m_0\alpha\pi)^l e^{t-T}e^{-\frac{8m_0n\alpha\pi^2}{h}t}}
{\bigg|Ce^{\frac{1}{\alpha}(t-T)}+z\bigg|}
\cdot\prod_{k=1}^{m}\frac{|z-s_k|}{Ce^{\frac{1}{\alpha}(t-T)}+s_k}\,\mathrm{d}t.\notag
\end{align}
We now bound the two resulting integrals separately.
For the interval $[\sqrt{h},T]$: Applying \eqref{eq:est}, \eqref{ine:minus_u}, the condition $4m_0\alpha\pi^2\ge h$, and the inequality $e^{t}\ge 1+t>t\,(t>0)$ gives
\begin{align}\label{eq:intest2}
&\int_{\sqrt{h}}^{T}\frac{C^{\alpha}\mathbb{T}_{m,\beta}(|t-T|+2m_0\alpha\pi)^le^{t-T}}
{{\cal X}(\beta)\delta^{m}}
e^{-\frac{8m_0n\alpha\pi^2}{h}t}\,\mathrm{d}t\notag\\
\le&\frac{C^{\alpha}\mathbb{T}_{m,\beta}(T+2m_0\alpha\pi)^l}
{{\cal X}(\beta)\delta^{m}}e^{-T}\int_{\sqrt{h}}^{T}
e^{-\frac{4m_0n\alpha\pi^2}{h}t}\,\mathrm{d}t\\
\le&\frac{C^{\alpha}\mathbb{T}_{m,\beta}(T+2m_0\alpha\pi)^l}
{{\cal X}(\beta)\delta^{m}}e^{-T}\cdot \frac{h}{4m_0n\alpha\pi^2e^{\frac{4m_0n\alpha\pi^2}{\sqrt{h}}}}\notag\\
< & \frac{h\sqrt{h}C^{\alpha}\mathbb{T}_{m,\beta}(T+2m_0\alpha\pi)^l}
{16m_0^2n^2\alpha^2\pi^4{\cal X}(\beta)\delta^{m}}e^{-T}
\le \frac{ C^{\alpha}\aleph^m(T+2m_0\alpha\pi)^l}
{n^2\sigma\alpha{\cal X}(\beta)}e^{-T},\notag
\end{align}
where we applied $\frac{h}{4\pi^2\alpha}\le m_0\le \frac{h}{4\pi^2\alpha}+1=\frac{\sigma^2\alpha}{4\pi^2}+1$ to the last inequality of \eqref{eq:intest2}.

For the interval $[T,+\infty)$: From \eqref{eq:kappa}, \eqref{ieq:positive_u}, we obtain
\begin{align}\label{eq:intest3}
&\int_T^{+\infty}\frac{C^{\alpha}\mathbb{T}_{m,\beta}((t-T)+2m_0\alpha\pi)^le^{t-T}}
{{\cal X}(\beta) C^{m+1}e^{\frac{m+1}{\alpha}(t-T)}}
e^{-\frac{8m_0n\alpha\pi^2}{h}t}\,\mathrm{d}t\notag\\
\le & e^{-\frac{8m_0n\alpha\pi^2}{h}T}\frac{\mathbb{T}_{m,\beta}}{{\cal X}(\beta)C^{m+1-\alpha}}\int_{T}^{+\infty}(t-T+2m_0\alpha\pi)^l
e^{-\frac{1}{\kappa}(t-T)}\,\mathrm{d}t\\
\le&e^{-2nT}\frac{\kappa(2m_0\alpha\pi+\kappa)^l\aleph^m}
{{\cal X}(\beta)C^{m+1-\alpha}}.\notag
\end{align}
These observations together imply that \eqref{eq:intestall} holds for the integral
\begin{equation*}
\left|\int_{\sqrt{h}-2im_0\alpha\pi}^{+\infty-2im_0\alpha\pi}f^{(l,*)}(y,z)e^{- i\frac{2n\pi}{h}y^2} \mathrm{d}y\right|.
\end{equation*}
Analogously, the same bound holds for
 \begin{align*}
\left| \int_{\sqrt{h}+2im_0\alpha\pi}^{+\infty+2im_0\alpha\pi} f^{(l,*)}(y,z)e^{ i\frac{2n\pi}{h}y^2} \,\mathrm{d}y\right|
\le \frac{\aleph^m(T+2m_0\alpha\pi)^l}
{n^2\sigma\alpha{\cal X}(\beta) C^{-\alpha}}e^{-T}+\frac{\kappa(2m_0\alpha\pi+\kappa)^l\aleph^m}
{{\cal X}(\beta)C^{m+1-\alpha}}e^{-2nT}.
\end{align*}
\end{proof}

  Assume $\Re(u_0(z))=T+\alpha\log{\frac{x}{C}}>\sqrt{h}+\alpha$,  which is equivalent to the condition $x\in (x^*,1]$. We denote
 $$
   S_{\beta}'=\left\{z:\,\, z=xe^{i\frac{\pm \theta\pi}{2}}\in S_{\beta}\mbox{\,\, with $x\in (x^*,1]$}\right\}.
 $$
 From \eqref{eq:all_poles_fux_C} and \eqref{eq:all_poles_fux_C_sl}, the function $f^{(l,*)}(u,z^+)$ is holomorphic in the strip
\begin{align*}
\Xi=\left\{u\in\mathbb{C}:\ |\Re(u)|\ge \sqrt{h}, \ |\Im(u)|\le  2m_0\alpha\pi\right\}
\end{align*}
except for a finite number of simple poles. Under the condition in \eqref{eq:onT} that $T>\sqrt{h}+\alpha-\alpha\log\frac{1}{C}\ge \sqrt{h}+\alpha-\alpha\log\frac{\delta}{C}$ (which in turn implies $T+\alpha\log{\frac{s_k}{C}}>\sqrt{h}+\alpha$), these poles are given, for $j=1,2,\ldots, m_0$ and $k=1,2,\ldots,m$, by
\begin{align}\label{eq:strippoles}
\left\{\begin{array}{l}u{(2j-1,0)}=T+\alpha\log{\frac{x}{C}}
-i\alpha\pi\left(2j-1-\frac{\theta}{2}\right),\\
\hspace{0.65cm}u{(2j,0)}=T+\alpha\log{\frac{x}{C}}
+i\alpha\pi\left(2j-1+\frac{\theta}{2}\right),\\
u{(2j-1,k)}=T+\alpha\log{\frac{s_k}{C}}-i(2j-1)\alpha\pi,\\
\hspace{0.65cm}u{(2j,k)}=T+\alpha\log{\frac{s_k}{C}}+i(2j-1)\alpha\pi.\end{array}\right.
\end{align}

\begin{lemma}\label{residual} Let $f^{(l,*)}(u,z)$ be defined as in \eqref{eq:uniffun} for $z\in S_{\beta}'$, with $\sigma_{\rm opt}=\frac{\pi\sqrt{2(2-\beta)}}{\sqrt{\alpha}}$ and $\eta=\frac{\sigma_{\rm opt}}{\sigma}$. Then
\begin{subequations}\label{asy00}
\begin{align}\label{asy00odd}
\sum_{n=1}^{+\infty}\sum_{j=1}^{m_0}\sum_{k=0}^{m}\left|\mathrm{Res}\left(f^{(l,*)}(u,z)e^{-i\frac{2n\pi}{h}u^2},u{(2j-1,k)}\right)\right|
=\left\{\begin{array}{ll}
\mathcal{O}(T^le^{-T}),&\sigma\le\sigma_{\rm opt},\\
\mathcal{O}(e^{-\eta^2T}),&\sigma> \sigma_{\rm opt},\end{array}\right.
\end{align}
\begin{align}\label{asy00even}
\sum_{n=1}^{+\infty}\sum_{j=1}^{m_0}\sum_{k=0}^{m}\left|\mathrm{Res}\left(f^{(l,*)}(u,z)e^{i\frac{2n\pi}{h}u^2},u{(2j,k)}\right)\right|
=\left\{\begin{array}{ll}
\mathcal{O}(T^le^{-T}),&\sigma\le\sigma_{\rm opt},\\
\mathcal{O}(e^{-\eta^2T}),&\sigma> \sigma_{\rm opt},\end{array}\right.
\end{align}
\end{subequations}
where the constants implicit in the  $\mathcal{O}(\cdot)$ terms are independent of  $z\in S_\beta'$ and $T$.
\end{lemma}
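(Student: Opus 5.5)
The plan is to compute every residue in closed form, bound it by an explicit exponential in $n$, $j$ and $\Re(u)$, and then sum the geometric series. I treat \eqref{asy00odd}; \eqref{asy00even} follows identically by conjugate symmetry, because the poles $u(2j,k)$ and the weight $e^{+i\frac{2n\pi}{h}u^2}$ are obtained from $u(2j-1,k)$ and $e^{-i\frac{2n\pi}{h}u^2}$ by $\theta\mapsto-\theta$ and $i\mapsto -i$. All poles in \eqref{eq:strippoles} are simple and $e^{\mp i\frac{2n\pi}{h}u^2}$ is entire, so
\[
\mathrm{Res}\big(f^{(l,*)}e^{-i\frac{2n\pi}{h}u^2},u_*\big)=\mathrm{Res}(f^{(l,*)},u_*)\,e^{-i\frac{2n\pi}{h}u_*^2}.
\]
Writing $u_*=a+ib$, we have $|e^{-i\frac{2n\pi}{h}u_*^2}|=e^{\frac{4n\pi ab}{h}}$.

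\textbf{Poles from $z$ ($k=0$).} At $u_*=u(2j-1,0)$ we have $Ce^{\frac1\alpha(u_*-T)}=-z$. The derivative of the denominator is $\frac{C}{\alpha}e^{\frac1\alpha(u_*-T)}=-\frac z\alpha$. Each factor of the product equals $\frac{z-s_k}{s_k-z}=-1$. Hence
\[
|\mathrm{Res}(f^{(l,*)},u_*)|=\alpha C^\alpha|e^{u_*-T}|\,|u_*-T|^l=\alpha C^\alpha e^{a-T}|a-T+ib|^l,
\]
with $a=T+\alpha\log\frac xC\in(\sqrt h+\alpha,\,T+\alpha\log\frac1C]$ (this interval is exactly $z\in S_\beta'$) and $|b|\le 2m_0\alpha\pi$. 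Here $b=-\alpha\pi(2j-1-\frac\theta2)$, and with $h=\sigma^2\alpha^2$ one checks
\[
c_{n,j}:=\frac{4n\pi^2\alpha(2j-1-\theta/2)}{h}=\eta^2\,\frac{2n(2j-1)-n\theta}{2-\beta}\ \ge\ \eta^2 .
\]
So the residue term is at most $\alpha C^\alpha e^{-T}\,(|T-a|+K)^l\,e^{a(1-c_{n,j})}$ with $K=2m_0\alpha\pi$. Since $c_{n,j}$ grows linearly in $n(2j-1)$, all but finitely many pairs have $c_{n,j}\ge 2$. For those pairs, using $a\ge\sqrt h+\alpha>0$, the terms are bounded by $C'T^le^{-T}e^{-a(c_{n,j}-1)}$, which sums geometrically over $n$ and $j$ to $\mathcal O(T^le^{-T})$.

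The remaining finitely many pairs are the main obstacle, since there $c_{n,j}$ depends on $\theta$ and may lie arbitrarily close to $1$ from either side.
\begin{itemize}[leftmargin=2em]
\item If $c_{n,j}\ge1$, the supremum over $a$ is again $\mathcal O(T^le^{-T})$.
\item If $c_{n,j}<1$, the exponent is maximized at the largest admissible $a\approx T$, where the bound is $\le C'(|T-a|+K)^le^{-(1-c)(T-a)}e^{-cT}$. I split further:
 \begin{itemize}[leftmargin=2em]
 \item if $c\ge\frac{1+\eta^2}{2}$, the crude bound $T^le^{-cT}$ is $\mathcal O(e^{-\eta^2T})$ because $c>\eta^2$ by a fixed margin;
 \item if $c<\frac{1+\eta^2}{2}$, then $1-c\ge\frac{1-\eta^2}{2}$, so $\sup_{s\ge0}(s+K)^le^{-(1-c)s}$ is bounded independently of $\theta$, and the term is $\mathcal O(e^{-cT})=\mathcal O(e^{-\eta^2T})$.
 \end{itemize}
\end{itemize}
The case $c<1$ can only occur when $\eta<1$, i.e.\ $\sigma>\sigma_{\rm opt}$. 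In that regime $T^le^{-T}=\mathcal O(e^{-\eta^2T})$ as well, so both parts of the dichotomy in \eqref{asy00odd} follow.

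\textbf{Poles from $s_k$ ($k\ge1$).} Here $Ce^{\frac1\alpha(u_*-T)}=-s_k$ and $a_k=T+\alpha\log\frac{s_k}{C}$. The residue has modulus
\[
\alpha s_k^\alpha|u_*-T|^l\,\frac{|z|}{|z-s_k|}\prod_{i\ne k}\frac{|z-s_i|}{|s_i-s_k|}.
\]
This is bounded uniformly in $z\in S_\beta$ and $T$: we have $|z-s_k|\ge s_k-1>0$, the shifted Chebyshev nodes are distinct and fixed, and $|u_*-T|\le\alpha|\log\frac{s_k}C|+K$. The Gaussian factor is $e^{-c'_{n,j}a_k}$ with $c'_{n,j}=\eta^2\frac{2n(2j-1)}{2-\beta}\ge\eta^2$, and $a_k\ge T-\alpha\log C$. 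Summing geometrically in $n$ and $j$ gives $\mathcal O(e^{-\eta^2T})$, which is also $\mathcal O(e^{-T})$ when $\eta\ge1$.

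Combining the two families yields \eqref{asy00odd}. All constants depend only on $\alpha,\beta,\sigma,C,m$ and $m_0$, never on $z\in S_\beta'$ or $T$.
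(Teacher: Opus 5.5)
Your proposal is correct and follows essentially the paper's argument: explicit simple-pole residues, the Gaussian factor $e^{-c_{n,j}\Re u}$ with $c_{n,j}\ge\eta^2$, geometric summation over $(n,j)$, the trade-off $x^\alpha e^{-\eta^2 a}=C^\alpha e^{-T}e^{(1-\eta^2)a}$ over $a\in(\sqrt h+\alpha,\,T+\alpha\log\frac1C]$, and an $\mathcal{O}(e^{-\frac{2}{2-\beta}\eta^2T})$ bound for the $s_k$-poles. The paper differs only in bookkeeping: it sums over $(n,j)$ first and then uses $e^{-c_{1,1}a}\le e^{-\eta^2 a}$ (valid since $a>0$), which removes the $\theta$-dependence you identified as the main obstacle and makes your case split of $c_{n,j}$ against $1$ and $\frac{1+\eta^2}{2}$ unnecessary, though that split is harmless. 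One small slip also does not affect the result: at $u(2j-1,k)$ with $k\ge1$ the factor $z-s_k$ cancels, so the residue modulus is $\alpha s_k^{\alpha-1}|z|\,|u_*-T|^l\prod_{i\ne k}\frac{|z-s_i|}{|s_i-s_k|}$ rather than your expression, and it is still bounded uniformly in $z$ and $T$.
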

\begin{proof}
Without loss of generality, we restrict our attention to the estimates in \eqref{asy00} for  $z=z^+=xe^{i\frac{\theta\pi}{2}}\in S_{\beta}'$. The conjugate case,
$z=z^-=xe^{-i\frac{\theta\pi}{2}}\in S_{\beta}'$, follows analogously.

(i) Residue at the typical pole $u(2j-1,0)$: Consider the poles arising from the equation
$$Ce^{\frac{1}{\alpha}(u{(2j-1,0)}-T)}+z=0, \quad {\rm i.e.,}\quad u{(2j-1,0)}=T+\alpha\log{\frac{x}{C}}
-i\alpha\pi\left(2j-1-\frac{\theta}{2}\right).$$
Since $\sqrt{h}+\alpha< \Re(u(2j-1,0))=T+\alpha\log{\frac{x}{C}}\le T+\alpha\log{\frac{1}{C}}$, it follows
\begin{align}\label{eq:res1}
&\left|\mathrm{Res}\left(f^{(l,*)}(u,z)e^{-i\frac{2n\pi}{h}u^2},u(2j-1,0)\right)\right|\notag\\
=&\left|\lim_{u\rightarrow u(2j-1,0)}(u-u(2j-1,0))\frac{zC^{\alpha}(u-T)^le^{u-T}}
{Ce^{\frac{1}{\alpha}(u-T)}+z}
\left(\prod_{k=1}^{m}\frac{z-s_k}{Ce^{\frac{1}{\alpha}(u-T)}+s_k}\right)e^{-i\frac{2n\pi}{h}u^2}\right|\\
=&\left|\lim_{u\rightarrow u(2j-1,0)}(u-u(2j-1,0))\frac{C^{\alpha}(u-T)^le^{u-T}}
{1-e^{\frac{1}{\alpha}(u-u(2j-1,0))}}
\left(\prod_{k=1}^{m}\frac{z-s_k}{Ce^{\frac{1}{\alpha}(u-T)}+s_k}\right)e^{-i\frac{2n\pi}{h}u^2}\right|\notag\\
=&\left|\lim_{u\rightarrow u(2j-1,0)}\frac{u-u(2j-1,0)}
{e^{\frac{1}{\alpha}(u-u(2j-1,0))}-1}
\right|\notag\\
&\cdot\left|e^{-i\frac{2n\pi}{h}[u(2j-1,0)]^2}(u(2j-1,0)-T)^lC^{\alpha}e^{u(2j-1,0)-T}\left(\prod_{k=1}^{m}\frac{z-s_k}{s_k-z}\right)\right|\notag\\
=&\alpha x^{\alpha}\left|\alpha\log{\frac{x}{C}}-i\left(2j-1-\frac{\theta}{2}\right)\alpha\pi\right|^le^{-\frac{2n\pi^2\alpha }{h}(4j-2-\theta)(T+\alpha\log{\frac{x}{C}})}\notag\\
\le&\alpha x^{\alpha}\left[\left|\alpha\log{\frac{x}{C}}\right|+2m_0\alpha\pi\right]^le^{-\frac{2n\pi^2\alpha }{h}(4j-2-\theta)(T+\alpha\log{\frac{x}{C}})}.\notag
\end{align}
Summing the upper bound in \eqref{eq:res1} over $j=1,\ldots,m$ and $n\ge 1$, and using the relations $h=\sigma^2\alpha^2$, $\eta^2=\frac{\sigma_{\rm opt}^2}{\sigma^2}=\frac{2\pi^2\alpha(2-\beta)}{h}$, along with the condition $T+\alpha\log{\frac{x}{C}}>\sqrt{h}+\alpha=(\sigma+1)\alpha$, we obtain
\begin{align}\label{asy0000}
&\sum_{n=1}^{+\infty}\sum_{j=1}^{m_0}\left|\mathrm{Res}\left(f^{(l,*)}(u,z)e^{-i\frac{2n\pi}{h}u^2},u{(2j-1,0)}\right)\right|\notag\\
\le&\alpha x^{\alpha}\left[\left|\alpha\log{\frac{x}{C}}\right|+2m_0\alpha\pi\right]^l\sum_{n=1}^{+\infty}\frac{e^{-\frac{2n\pi^2\alpha(2-\theta) }{h}(T+\alpha\log{\frac{x}{C}})}}
{1-e^{-\frac{8n\pi^2\alpha}{h}(T+\alpha\log{\frac{x}{C}})}}\\
\le&\alpha x^{\alpha}\left[\left|\alpha\log{\frac{x}{C}}\right|+2m_0\alpha\pi\right]^l\frac{1}{1-e^{-\frac{8\pi^2(\sigma+1)\alpha^2}{h}}}
\sum_{n=1}^{+\infty}e^{-\frac{2n\pi^2\alpha(2-\theta) }{h}(T+\alpha\log{\frac{x}{C}})}\notag\\
\le&\alpha x^{\alpha}\left[\left|\alpha\log{\frac{x}{C}}\right|+2m_0\alpha\pi\right]^l\frac{\sigma^2}{8\pi^2(\sigma+1)}e^{\frac{8\pi^2(\sigma+1)}{\sigma^2}}
\frac{1}{e^{\frac{2\pi^2\alpha(2-\theta) }{h}(T+\alpha\log{\frac{x}{C}})}-1}\notag\\
\le&\alpha x^{\alpha}\left[\left|\alpha\log{\frac{x}{C}}\right|+2m_0\alpha\pi\right]^l\frac{\sigma^2e^{\frac{8\pi^2(\sigma+1)}{\sigma^2}}}{8\pi^2(\sigma+1)}
\frac{e^{\frac{2\pi^2\alpha(2-\theta)(\sigma+1)\alpha}{h}}}{e^{\frac{2\pi^2\alpha(2-\theta)(\sigma+1)\alpha}{h}}-1}
e^{-\frac{2\pi^2\alpha(2-\theta) }{h}(T+\alpha\log{\frac{x}{C}})}\notag\\
\le&\alpha x^{\alpha}\left[\left|\alpha\log{\frac{x}{C}}\right|+2m_0\alpha\pi\right]^l\frac{\sigma^2e^{\frac{8\pi^2(\sigma+1)}{\sigma^2}}}{8\pi^2(\sigma+1)}
\frac{he^{\frac{2\pi^2\alpha(2-\theta)(\sigma+1)\alpha}{h}}}{2\pi^2\alpha(2-\theta)(\sigma+1)\alpha}
e^{-\frac{2\pi^2\alpha(2-\theta) }{h}(T+\alpha\log{\frac{x}{C}})}\notag\\
\le&\alpha x^{\alpha}\left[\left|\alpha\log{\frac{x}{C}}\right|+2m_0\alpha\pi\right]^l\frac{\sigma^4e^{\frac{12\pi^2(\sigma+1)}{\sigma^2}}}{16\pi^4(\sigma+1)^2(2-\theta)}
e^{-\frac{2\pi^2\alpha(2-\theta) }{h}(T+\alpha\log{\frac{x}{C}})}\notag\\
\le&\alpha x^{\alpha}\left[\left|\alpha\log{\frac{x}{C}}\right|+2m_0\alpha\pi\right]^l\frac{\sigma^4e^{\frac{12\pi^2(\sigma+1)}{\sigma^2}}}{16\pi^4(\sigma+1)^2(2-\beta)}
e^{-\eta^2 (T+\alpha\log{\frac{x}{C}})}.\notag
\end{align}
In the proof of the third and fifth identities in \eqref{asy0000}, we applied the inequality $\frac{e^t}{e^t-1}\le t^{-1}e^t$ for $t>0$.
 Specifically, we used this bound with $t=\frac{8\pi^2(\sigma+1)}{\sigma^2}$ and
$t=\frac{2\pi^2\alpha(2-\theta)(\sigma+1)\alpha}{h}$, respectively.

Thus, for the case $l=0$, the maximum of $x^{\alpha}e^{-\eta^2 (T+\alpha\log{\frac{x}{C}})}$ over $[x^*,1]$ is bounded by
\begin{align}\label{eq:balance}
 \max_{x\in [x^*,1]} x^{\alpha}e^{-\eta^2 (T+\alpha\log{\frac{x}{C}})}=\left\{\begin{array}{ll} (x^*)^{\alpha}e^{-\eta^2 (T+\alpha\log{\frac{x^*}{C}})}=  C^\alpha e^{(1-\eta^2)(\sigma+1)\alpha} e^{-T},&\eta\ge 1,\\
 e^{-\eta^2 (T+\alpha\log{\frac{1}{C}})}=  C^{\alpha\eta^2} e^{-\eta^2T},&\eta< 1,\end{array}\right.
\end{align}
which follows from the fact that $\frac{\mathrm{d}}{\mathrm{d}x}\log\left(x^{\alpha}e^{-\eta^2 (T+\alpha\log{\frac{x}{C}})}\right)=\frac{\alpha}{x}(1-\eta^2)$. Consequently, we derive
\begin{align}\label{asy0000-0}
\sum_{n=1}^{+\infty}\sum_{j=1}^{m_0}\left|\mathrm{Res}\left(f^{(l,*)}(u,z)e^{-i\frac{2n\pi}{h}u^2},u{(2j-1,0)}\right)\right|\le \left\{\begin{array}{ll}\frac{\alpha\sigma^4e^{\frac{12\pi^2(\sigma+1)}{\sigma^2}}}{16\pi^4(\sigma+1)^2(2-\beta)} C^\alpha e^{-T},&\eta\ge 1,\\
\frac{\alpha\sigma^4e^{\frac{12\pi^2(\sigma+1)}{\sigma^2}}}{16\pi^4(\sigma+1)^2(2-\beta)}   C^\alpha e^{-\eta^2T},&\eta< 1.\end{array}\right.
\end{align}

\bigskip
To achieve the optimal convergence rate in \eqref{eq:quadraqll} for the special case $l=1$ without the factor $T$ for the case $\sigma>\sigma_{\rm opt}$, we proceed by considering different regimes for  $x$.

If $x\in [\min\{1,C\},1]$, then from \eqref{asy0000} together with the definition of  $m_0$,
we obtain directly
\begin{align*}
&\sum_{n=1}^{+\infty}\sum_{j=1}^{m_0}\left|\mathrm{Res}\left(f^{(l,*)}(u,z)e^{-i\frac{2n\pi}{h}u^2},u{(2j-1,0)}\right)\right|
=\mathcal{O}(e^{-\eta^2T}).
\end{align*}

If $x\in [x^*,\min\{1,C\}]$, define
$$
\varsigma(x)=-\alpha x^{\alpha}\log\left(\frac{x}{C}\right)e^{-\frac{2\pi^2\alpha(2-\beta)}{h}(T+\alpha\log{\frac{x}{C}})}=-\alpha x^{\alpha}\log\left(\frac{x}{C}\right)
e^{-\eta^{2}(T+\alpha\log\frac{x}{C})}(\ge0).
$$
Notably for $\sigma\le \sigma_{\rm opt}$ (i.e., $\eta\ge 1$), the derivative of $\varsigma(x)$
satisfies
$$
\frac{\mathrm{d}}{\mathrm{d}x}\varsigma(x)=-\alpha x^{\alpha-1}e^{-\eta^2(T+\alpha\log{\frac{x}{C}})}
\left[\alpha \left(1-\eta^2\right)\log{\frac{x}{C}}+1\right]\le 0,
$$
then function $\varsigma$ is decreasing and
$$
0<\varsigma(x)\le \varsigma(x^*)\le   C^\alpha(T-\sqrt{h}-\alpha)e^{-T+(1-\eta^2)(\sqrt{h}+\alpha)}\le  C^\alpha Te^{-T}.
$$
For $\sigma> \sigma_{\rm opt}$ (i.e., $\eta< 1$), it is easy to verify that
$\frac{d}{dx}\varsigma(x)<0$ for $x\ge C$, whereas $\frac{\mathrm{d}}{\mathrm{d}x}\varsigma(x)<0$  for sufficiently small$x>0$.
Moreover, $\widehat{x}=Ce^{-\frac{1}{\alpha(1-\eta^2)}}$ is
the unique critical point of  $\varsigma(x)$ for $x\in (0,C](\supseteq [x^*,\min\{1,C\}])$, which yields for $x\in [x^*,\min\{1,C\}]$ that
$$
\varsigma(x)\le \left\{\begin{array}{ll}\varsigma(\widehat{x})=\frac{C^{\alpha}e^{-\eta^2T}}{e(1-\eta^2)}\,(\le e^{-1}C^{\alpha}(T-\sqrt{h}-\alpha)e^{-\eta^2T}),
&\frac{1}{1-\eta^2}\le T-\sqrt{h}-\alpha\, ({\rm i.e.,}\,\, \widehat{x}\in [x^*,C]),\\
\varsigma\left(x^*\right)=\mathcal{O}(T^le^{-T}),&\frac{1}{1-\eta^2}> T-\sqrt{h}-\alpha\, ({\rm i.e.,}\,\, \widehat{x}<x^*),\end{array}\right.
$$
where the constants implicit in the above two $\mathcal{O}(\cdot)$ terms are independent of $T$ and  $z \in S_\beta'$.

Collecting the two cases gives, for $x\in (x^*,1]$,
\begin{align}\label{asy000}
&\sum_{n=1}^{+\infty}\sum_{j=1}^{m_0}\left|\mathrm{Res}\left(f^{(l,*)}(u,z)e^{-i\frac{2n\pi}{h}u^2},u{(2j-1,0)}\right)\right|
=\left\{\begin{array}{ll}
\mathcal{O}(T^le^{-T}),&\sigma\le\sigma_{\rm opt},\\
\mathcal{O}(e^{-\eta^2T}),&\sigma> \sigma_{\rm opt},\end{array}\right.
\end{align}
where we have utilized the property that $Te^{-T}=\mathcal{O}(e^{-\eta^2T})$ is uniformly bounded for all $T>0$ when $\eta<1$.
Furthermore, the  constants in the $\mathcal{O}(\cdot)$ terms in \eqref{asy000}  are uniform, i.e., independent of both $T$ and  $z \in S_\beta'$.

(ii) Residues at the poles $u(2j-1,k)$: Consider the poles determined by the equation
$$Ce^{\frac{1}{\alpha}(u{(2j-1,k)}-T)}+s_k=0, \quad {\rm i.e.,}\quad u{(2j-1,k)}=T+\alpha\log{\frac{s_k}{C}}-i(2j-1)\alpha\pi.$$
For these poles, the residue can be bounded as follows
\begin{align*}
&\left|\mathrm{Res}\left[f^{(l,*)}(u,z)e^{-i\frac{2n\pi}{h}u^2},u{(2j-1,k)}\right]\right|\notag\\
=&\left|\alpha zs_k^{\alpha-1}\left(\alpha\log{\frac{s_k}{C}}-i(2j-1)\alpha\pi\right)^le^{-\frac{4(2j-1)n\pi^2\alpha }{h}(T+\alpha\log{\frac{s_k}{C}})}
\prod_{\displaystyle \substack{v=1 \\ v \neq k}}^{m}\frac{z-s_v}{s_v-s_k}\right|\\
\le&\alpha s_k^{\alpha-1}\left(\alpha\left|\log{\frac{s_k}{C}}\right|+2m_0\alpha\pi\right)^l\mathbb{T}_{m,\beta}e^{-\frac{4(2j-1)n\pi^2\alpha }{h}(T+\alpha\log{\frac{s_k}{C}})}
\prod_{\displaystyle \substack{v=1 \\ v \neq k}}^{m}\frac{1}{|s_v-s_k|}\notag\\
\le&\alpha \max\{\delta^{\alpha-1}, (\delta+2\omega)^{\alpha-1}\} \left(\alpha\max_{1\le k\le m}\left|\log{\frac{s_k}{C}}\right|+2m_0\alpha\pi\right)^l\mathbb{T}_{m,\beta}\notag\\
&\hspace{0.36cm}\cdot e^{-\frac{4(2j-1)n\pi^2\alpha }{h}(T+\alpha\log{\frac{1}{C}})}\frac{2^{m-1}}{m\omega^{m-1}}\notag\\
=&\mathcal{O}(1) e^{-\frac{4(2j-1)n\pi^2\alpha }{h}(T+\alpha\log{\frac{1}{C}})},\notag
\end{align*}
where  the constant implicit in the
$\mathcal{O}(\cdot)$ term is independent of $n$, $T$ and $z\in S_\beta'$.
In this estimate, the product term is bounded using the standard estimate for the Chebyshev points $x_k=\cos\frac{(2k-1)\pi}{2m}$ ($k=1,\ldots,m$) given in \cite[(2.48)]{Mason2003}
\begin{align*}
\prod_{\displaystyle \substack{v=1\\v\neq k}}^{m}\frac{1}{|s_v-s_k|}
=& \frac{1}{\omega^{m-1}}\prod_{\displaystyle \substack{v=1\\v\neq k}}^{m}\frac{1}{|x_v-x_k|}\le\frac{2^{m-1}\big|\sin\frac{(2k+1)\pi}{2m}\big|}{m\omega^{m-1}}\le \frac{2^{m-1}}{m\omega^{m-1}}.
\end{align*}
This inequality relies on the exact representation
\begin{align*}
\prod_{\displaystyle \substack{k=1 \\ k \neq j}}^{m}(x_j-x_k)=\frac{1}{2^{m-1}}\frac{d}{dt}T_m(t)\bigg|_{t=x_j}=\frac{m}{2^{m-1}}\frac{\sin\frac{m(2j+1)\pi}{2m}}{\sin\frac{(2j+1)\pi}{2m}}=
\frac{(-1)^jm}{2^{m-1}\sin\frac{(2j+1)\pi}{2m}}.
\end{align*}

Following a procedure analogous to that of \eqref{asy0000}, we sum over the indices $j$, $k$ and $n$. By invoking the condition
$T+\alpha\log{\frac{1}{C}}>\sqrt{h}+\alpha=(\sigma+1)\alpha$, we arrive at
\begin{align}\label{asy0k}
&\sum_{n=1}^{+\infty}\sum_{j=1}^{m_0}\sum_{k=1}^{m}\left|\mathrm{Res}\left[f^{(l,*)}(u,z)e^{-i\frac{2n\pi}{h}u^2},u(2j-1,k)\right]\right|\notag\\
=& \mathcal{O}(1) \sum_{n=1}^{+\infty}\sum_{j=1}^{m_0} e^{-\frac{4(2j-1)n\pi^2\alpha }{h}(T+\alpha\log{\frac{1}{C}})}\\
=&\mathcal{O}(1) \sum_{n=1}^{+\infty}e^{-\frac{4n\pi^2\alpha }{h}(T+\alpha\log{\frac{1}{C}})}=\frac{\mathcal{O}(1)e^{-\frac{4\pi^2\alpha }{h}(T+\alpha\log{\frac{1}{C}})}}{1-e^{-\frac{4\pi^2\alpha }{h}(T+\alpha\log{\frac{1}{C}})}} \notag\\
=&\mathcal{O}(1)e^{-\frac{2}{2-\beta}\eta^2T}
=\left\{\begin{array}{ll}
\mathcal{O}(e^{-T}),&\sigma\le\sigma_{\rm opt},\\
\mathcal{O}(e^{-\eta^2T}),&\sigma> \sigma_{\rm opt},
\end{array}\right.\notag
\end{align}
where we have used the fact that $\frac{4\pi^2\alpha}{h}=\frac{2}{2-\beta}\eta^2$ and $\frac{2}{2-\beta}\ge 1$. It is important to note that all implicit constants in the
$\mathcal{O}(\cdot)$ terms are independent of  $T$ and $z\in S_\beta'$.

By combining \eqref{asy000} with \eqref{asy0k}, we arrive at \eqref{asy00odd}. Note that all implicit constants in the
$\mathcal{O}(\cdot)$ terms are independent of both $z\in S_\beta'$ and $T$. The derivation of \eqref{asy00even} proceeds via a similar argument, replacing$-\theta$ with
$+\theta$ in \eqref{eq:res1} and \eqref{asy0000}, respectively. Consequently, the upper bounds in the final inequality of \eqref{eq:res1} and in the penultimate line of \eqref{asy0000} are correspondingly reduced, while the upper bound in \eqref{asy0k} remains unchanged due to the symmetry of the poles about the real axis.
\end{proof}

\begin{theorem}\label{eq:quaderrthm}
Let $f^{(l)}(u,z)$ be defined as in \eqref{eq:func}, and let $z=xe^{i\frac{\pm \theta\pi}{2}}\in S_{\beta}$ with $0\le \theta\le \beta$. Then the quadrature error \eqref{eq:quadraqll} satisfies
\begin{align}\label{errquad0}
E^{(l)}_Q(z)=\left\{\begin{array}{ll}
\mathcal{O}(T^le^{-T})=\mathcal{O}(T^le^{-\pi\eta^{-1}\sqrt{2(2-\beta)N\alpha}}),&\sigma\le \sigma_{\rm opt},\\
\mathcal{O}(e^{-\pi\eta\sqrt{2(2-\beta)N\alpha}}),&\sigma> \sigma_{\rm opt},
\end{array}\right.\quad \eta=\frac{\sigma_{\rm opt}}{\sigma},
\end{align}
uniformly for $z\in S_{\beta}$. Moreover, the constant implied by the $\mathcal{O}(\cdot)$ notation is independent of $T$ and $z\in S_{\beta}$.
\end{theorem}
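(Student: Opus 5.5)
The plan is to write the truncated quadrature error as a bi-infinite trapezoidal error for the extension $\bar f^{(l)}$ plus small corrections, and then bound it through Poisson's formula \eqref{QuadratureErrorfor_w}. Take $\hbar=\lambda_0 h$. Then
\[
E_Q^{(l)}(z)=E_Q^{\bar f}+\Big(h\textstyle\sum_{|k|\le\lambda_0}\bar f^{(l)}(kh,z)-\int_{-\hbar}^{\hbar}\bar f^{(l)}\Big)/2-\text{(tail beyond }N_th),
\]
using the symmetry of $\bar f^{(l)}$ and the fact that $\bar f^{(l)}=f^{(l)}$ for $u\ge\hbar$. The remaining pieces are the following.
\begin{itemize}
\item The contribution of $[0,\hbar]$: the integral of $f^{(l)}$ over $[0,\hbar]$, together with the finitely many samples $hf^{(l)}(kh,z)$, $k\le\lambda_0$. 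Here $t=\sqrt u-T\le\sqrt{\hbar}-T\approx -T$, so by \eqref{ine:minus_u} these terms are $\mathcal{O}(T^le^{-T})$. The integrable singularity $1/(2\sqrt u)$ is harmless after the substitution $t=\sqrt u-T$, and the finitely many sample values are each $\mathcal{O}(T^le^{-T})$.
\item The tail $u>N_th$: this is $\mathcal{O}(T^le^{-(\kappa+1)T+T})$ by \eqref{ieq:positive_u}, which is even smaller.
\end{itemize}
The problem therefore reduces to bounding $\sum_{n\ne0}|\mathfrak F[\bar f^{(l)}](n/h)|$ through the representation \eqref{eq:fourier_cpath00}, splitting into $z\in S_\beta'$ and $z\in S_\beta\setminus S_\beta'$.

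For $z\in S'_\beta$, I would deform each integral in \eqref{eq:fourier_cpath00}, starting from $\sqrt{\hbar}$, in three stages.
\begin{itemize}
\item First run from $\sqrt{\hbar}$ back to $\sqrt h$ along the real axis. The piece over $[\sqrt h,\sqrt{\hbar}]$ is again $\mathcal{O}(T^le^{-T}/n^2)$: integrate by parts once in $y^2$ to gain the factor $h/(n\pi)$, and combine it with the size bound from \eqref{ine:minus_u}.
\item Then go vertically down from $\sqrt h$ to $\sqrt h-2im_0\alpha\pi$ for the $e^{-i2n\pi y^2/h}$ term, and up for the conjugate term. These are the directions in which the oscillatory factor decays.
\item Finally run horizontally to $+\infty$ and close the contour at $A\to\infty$, where the vertical side vanishes by \eqref{path10}.
\end{itemize}
Cauchy's residue theorem then expresses each Fourier integral as the horizontal integral, plus the short vertical segment at $\Re y=\sqrt h$, plus $2\pi i$ times the residues at the poles $u(2j-1,k)$ (respectively $u(2j,k)$) of \eqref{eq:strippoles}. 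The poles lie inside the contour precisely because $x>x^*$ and \eqref{eq:onT} hold.
\begin{itemize}
\item The horizontal integrals are controlled by Lemma \ref{infty}. Summed over $n\ge1$ they give $\mathcal{O}(T^le^{-T})$, since $\sum n^{-2}<\infty$ and $\sum e^{-2nT}=\mathcal{O}(e^{-2T})$.
\item The vertical segment at $\Re y=\sqrt h$ lies where $\Re(y-T)\approx -T$, so $|f^{(l,*)}|\lesssim T^le^{-T}$. Along it the factor $e^{\mp i2n\pi y^2/h}$ has modulus $e^{-4n\pi\sqrt h\,s/h}$, which gives a factor $\mathcal{O}(1/n)$; I expect an integration by parts in $s$, or pairing the $\pm$ terms, to supply the summable $1/n^2$.
\item The residue sums are exactly Lemma \ref{residual}, yielding $\mathcal{O}(T^le^{-T})$ for $\sigma\le\sigma_{\rm opt}$ and $\mathcal{O}(e^{-\eta^2T})$ otherwise.
\end{itemize}

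For $z\in S_\beta\setminus S_\beta'$, that is $|z|\le x^*=\mathcal{O}(e^{-T/\alpha})$, I would instead note that the poles $u(\cdot,0)$ have real part at most $\sqrt h+\alpha$ and sit to the left of the contour, so no residues from them are picked up. The factor $z$ in $f^{(l)}$ gives the smallness directly. The crude approach is to bound $|E_Q|\le |I^{(l)}|+|r^{(l)}_{N_t}|$, each of which is $\lesssim x^{\alpha}$ up to an $\mathcal{O}(T^l)$ factor, using \eqref{eq:est} and $|Ce^{t/\alpha}+z|\ge\mathcal X(\beta)\max$. Since $(x^*)^\alpha=C^\alpha e^{(\sigma+1)\alpha}e^{-T}$, this is $\mathcal{O}(T^le^{-T})$, which suffices in both regimes because $e^{-T}\le e^{-\eta^2T}$ when $\eta<1$.

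Finally, I would convert using $T=\sigma\alpha\sqrt{N_1}$ and $\eta^2T=\eta\,\pi\sqrt{2(2-\beta)\alpha N_1}$, with $N_1\sim N$ because $N_2=\mathcal{O}(\sqrt{N_1})$.

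The main obstacle I anticipate is the log case $l=1$ when $\sigma>\sigma_{\rm opt}$, where the statement claims no factor $T$. The residue part is already handled in Lemma \ref{residual}, but the leftover contributions (the near-origin pieces and the small-$|z|$ region) carry $Te^{-T}$. This must be absorbed into $\mathcal{O}(e^{-\eta^2T})$ using $\eta<1$, exactly as was done at the end of \eqref{asy000}. I would check carefully that every remaining term is indeed $\mathcal{O}(T^le^{-T})$ with constants uniform in $z$.
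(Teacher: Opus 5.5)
\textbf{The gap: the region $z\in S_\beta\setminus S_\beta'$.} Your bound $|E_Q^{(l)}|\le|I^{(l)}|+|r^{(l)}_{N_t}|$, with both terms $\mathcal{O}(T^lx^\alpha)$, is valid only for $\alpha<1$. (For $l=0$ one has $m=0$; for $l=1$ one has $m=1$ but $1-1/\alpha<0$.) For $\alpha>1$ it fails:
\begin{itemize}
\item In the range $x\ll Ce^{t/\alpha}\ll 1$, the integrand is approximately $(-1)^mzC^{\alpha-1}t^le^{(1-1/\alpha)t}$. Since $1-1/\alpha>0$, its mass sits at $t=\mathcal{O}(1)$, where $Ce^{t/\alpha}$ becomes comparable with the $s_k$. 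So $|I^{(l)}(z)|$ is of exact order $|z|$, not $|z|^\alpha$.
\item This is not an artifact of your estimate. For non-integer $\alpha$, \eqref{eq:zalpha} makes the integral a nonzero multiple of $z^\alpha-z\mathcal{L}[z^{\alpha-1};s_1,\ldots,s_m]$. Its linear term does not vanish in general; for $m=1$ it is $s_1^{\alpha-1}z$.
\item Since $x^*=Ce^{\sigma+1}e^{-T/\alpha}$, your argument gives only $\mathcal{O}(e^{-T/\alpha})$ on this region, far weaker than $\mathcal{O}(T^le^{-T})$. At $\alpha=1$ you also lose a factor $T$.
\end{itemize}
The true smallness of $E^{(l)}_Q$ there comes from cancellation between $I^{(l)}$ and $r^{(l)}_{N_t}$, which absolute-value bounds cannot see.

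\textbf{What is needed.} Run the Poisson/contour argument for these $z$ as well; this is the paper's Case 2. You sketched this contour picture but then replaced it with the crude bound.
\begin{itemize}
\item Take $\hbar=\lambda_0h$. Then $\sqrt{\lambda_0h}\ge\sqrt h+2\alpha$ lies at least $\alpha$ to the right of every pole $u(\cdot,0)$, because $x\le x^*$. By \eqref{eq:onT} it lies at least $\alpha$ to the left of every $u(\cdot,k)$, $k\ge1$.
\item On the vertical segment $\Re y=\sqrt{\lambda_0h}$ one has $|z/(Ce^{(y-T)/\alpha}+z)|=|1-e^{(y-u(1,0))/\alpha}|^{-1}\le 1/(e-1)$ and $|Ce^{(y-T)/\alpha}+s_k|\ge(1-e^{-1})s_k$. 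Hence $f^{(l,*)}=\mathcal{O}(T^le^{-T})$ there, uniformly in $z$ and for every $\alpha>0$.
\item The horizontal lines are covered by Lemma \ref{infty}. Only the $s_k$-poles are enclosed, and they are bounded as in \eqref{asy0k}.
\end{itemize}

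\textbf{Two smaller points for $z\in S_\beta'$,} where your argument otherwise matches the paper's Case 1.
\begin{itemize}
\item On the vertical segment, the summable $n^{-2}$ comes from pairing the two conjugate integrals, as in \eqref{eq:intestimate5}. Integration by parts in $s$ alone leaves an $\mathcal{O}(1/n)$ boundary term at $s=0$, which only cancels between the two integrals.
\item Your real-axis piece over $[\sqrt h,\sqrt{\hbar}]$ likewise needs pairing plus two integrations by parts; one integration gives only $\mathcal{O}(1/n)$. You can avoid this piece entirely by taking $\hbar=h$ for $z\in S_\beta'$, as the paper does, since the extension may depend on $z$.
\end{itemize}
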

\begin{proof}
Since  $E^{(l)}_Q(0)=0$ is immediate  from $f^{(l)}(u,0)\equiv 0$,
it suffices to bound $E^{(l)}_Q(z)$ for $z\not=0$. To avoid repetition, we concentrate on the convergence rate in \eqref{errquad0} for  $z=xe^{i\frac{\theta\pi}{2}}\in S_{\beta}$ with $0\le \theta\le \beta$  and  $z\not=0$.
The case  $z=xe^{-i\frac{\theta\pi}{2}}$
follows analogously.


\textbf{Case 1: $\Re(u_0(z))=T+\alpha\log{\frac{x}{C}}>\sqrt{h}+\alpha$ (i.e., $z\in S_\beta'$)}. In \eqref{eq:extension_of_fnew11110} and \eqref{eq:fourier_c10}, we set $\hbar=h$.
 Without loss of generality, we focus on estimating  $c_n$ for $n\ge 1$
using Cauchy's integral formula and the residue theorem. The case $n\le -1$ is treated analogously by shifting the integration path upward or downward.

From \eqref{eq:all_poles_fux_C} and \eqref{eq:all_poles_fux_C_sl}, the function $f^{(l,*)}(u,z)$ is holomorphic in the strip $\Xi$
except for a finite number of simple poles listed in \eqref{eq:strippoles}.

With the choice  $\hbar=h$, equation \eqref{eq:fourier_cpath00} provides the following representation for the Fourier coefficient
\begin{align}\label{eq:fourier_cpath0}
\mathfrak{F}[\overline{f}^{(l)}]
\big(\frac{n}{h}\big)=hc_n=&
\int_{\sqrt{h}}^{+\infty}
\frac{zC^{\alpha}(y-T)^le^{y-T}}
{Ce^{\frac{1}{\alpha}(y-T)}+z}
\left(\prod_{k=1}^{m}\frac{z-s_k}{Ce^{\frac{1}{\alpha}(y-T)}+s_k}\right)e^{-i\frac{2n\pi}{h}y^2}\,\mathrm{d}y\notag\\
&+\int_{\sqrt{h}}^{+\infty}
\frac{zC^{\alpha}(y-T)^le^{y-T}}
{Ce^{\frac{1}{\alpha}(y-T)}+z}
\left(\prod_{k=1}^{m}\frac{z-s_k}{Ce^{\frac{1}{\alpha}(y-T)}+s_k}\right)e^{ i\frac{2n\pi}{h}y^2}\,\mathrm{d}y.
\end{align}
Moreover, estimate \eqref{path10} guarantees that for $0\le t\le 2m_0\alpha\pi$
\begin{align*}
|f^{(l,*)}(A \pm it,z)e^{\pm i\frac{2n\pi}{h}(A\pm-it)^2}|
\le |f^{(l,*)}(A \pm it,z)|\rightarrow 0 \mbox{\quad as $A\rightarrow +\infty$}.
  \end{align*}
This ensure that
\begin{align}\label{eq:Cauchycon0}
\lim_{A\to+\infty} \int_{0}^{2m_0\alpha\pi}
f^{(l,*)}(A \pm it, z) e^{\pm i\frac{2n\pi}{h}(A \pm it)^{2}} \,\mathrm{d}t = 0.
\end{align}
Hence, applying Cauchy's integral theorem to $f^{(l,*)}(y,z)e^{\pm i\frac{2n\pi}{h}y^2}$
(see the contour in Fig. \ref{integral_contour}), we can evaluate $hc_n$
as
\begin{align}\label{eq:fourier_cpath}
hc_n
=&\int_{\sqrt{h}-2im_0\alpha\pi}^{+\infty-2im_0\alpha\pi}f^{(l,*)}(y,z)e^{- i\frac{2n\pi}{h}y^2} \,\mathrm{d}y+ \int_{\sqrt{h}+2im_0\alpha\pi}^{+\infty+2im_0\alpha\pi}f^{(l,*)}(y,z)e^{ i\frac{2n\pi}{h}y^2} \,\mathrm{d}y\notag\\
&+i\int_{0}^{2m_0\alpha\pi}\big{[}f^{(l,*)}(\sqrt{h}+it,z)e^{i\frac{2n\pi}{h}(\sqrt{h}+it)^2}-f^{(l,*)}(\sqrt{h}-it,z)e^{-i\frac{2n\pi}{h}(\sqrt{h}-it)^2})\big{]}
\, \mathrm{d}t\\
&-2\pi i\sum_{j=1}^{m_0}\sum_{k=0}^{m}\mathrm{Res}\left[f^{(l,*)}(u,z)e^{-i\frac{2n\pi}{h}u^2},u(2j-1,k)\right]\notag\\
&-2\pi i\sum_{j=1}^{m_0}\sum_{k=0}^{m}\mathrm{Res}\left[f^{(l,*)}(u,z)e^{i\frac{2n\pi}{h}u^2},u(2j,k)\right].\notag
\end{align}

\begin{figure}[htp]
\centerline{\includegraphics[height=6.36cm,width=12.6cm]{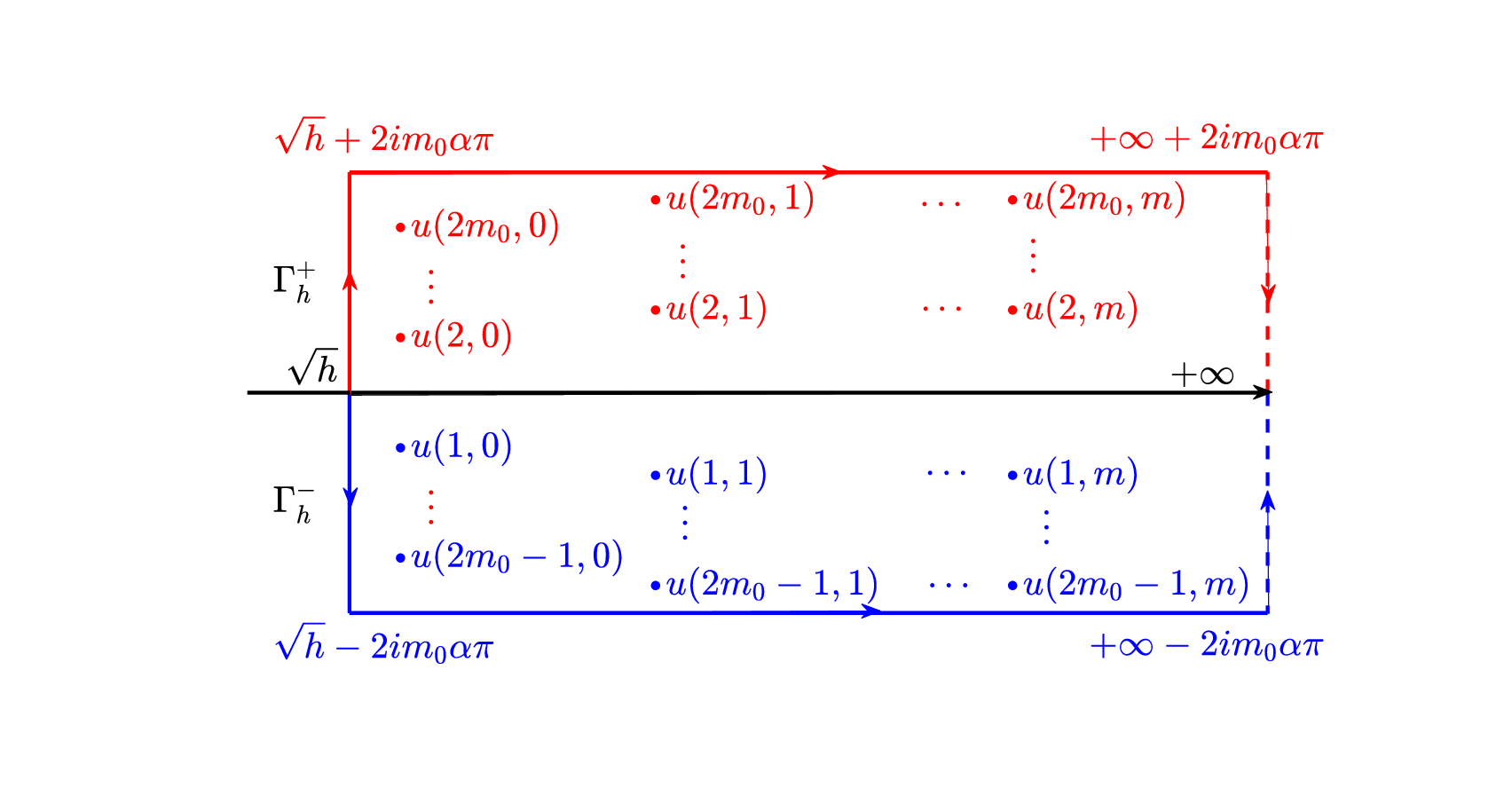}}
\vspace{-0.8cm}
\caption{The integral contours $\Gamma^{-}_{h}$ (blue) and $\Gamma^{+}_{h}$ (red). The poles of $f^{(l,*)}(u,z)$ lying in the interior of contour $\Gamma^{\mp}_h$ for the two integrals in \eqref{eq:fourier_cpath0} are $u(2j,k),\, u(2j-1,k)$, $j=1,\cdots,m_0,\ k=0,1,\cdots,m$.}
\label{integral_contour}
\end{figure}

In view of Lemma \ref{infty} and Lemma \ref{residual}, it suffices to estimate the third integral in \eqref{eq:fourier_cpath}.
The third integral  can be bounded by the representation given below
\begin{align}\label{eq:intestimate5}
&\int_0^{2m_0\pi\alpha}\left[f^{(l,*)}(\sqrt{h}+i t,z)e^{i\frac{2n\pi}{h}(\sqrt{h}+i t)^2}-f^{(l,*)}(\sqrt{h}-i t,z)e^{-i\frac{2n\pi}{h}(\sqrt{h}-i t)^2}\right]
\,\mathrm{d}t\notag\\
=&\int_0^{2m_0\pi\alpha}e^{-\frac{4n\pi}{\sqrt{h}}t}\left[f^{(l,*)}(\sqrt{h}+i t,z)-f^{(l,*)}(\sqrt{h}-i t,z)\right]e^{-i\frac{2n\pi}{h}t^2}\,\mathrm{d}t\\
&-\int_0^{2m_0\pi\alpha}e^{-\frac{4n\pi}{\sqrt{h}}t}f^{(l,*)}(\sqrt{h}-i t,z)\left[e^{i\frac{2n\pi}{h}t^2}-e^{-i\frac{2n\pi}{h}t^2}\right]
\,\mathrm{d}t.\notag
\end{align}

For the first term on the right-hand side of \eqref{eq:intestimate5}, we use the identities $z=-Ce^{\frac{1}{\alpha}(u(1,0)-T)}$ to express $f^{(l,*)}(\sqrt{h}+i t,z)$ as
\begin{align*}
f^{(l,*)}(\sqrt{h}+i t,z)=&\frac{zC^{\alpha}(\sqrt{h}+i t-T)^le^{\sqrt{h}+i t-T}}
{Ce^{\frac{1}{\alpha}(\sqrt{h}+i t-T)}+z}
\prod_{k=1}^{m}\frac{z-s_k}{Ce^{\frac{1}{\alpha}(\sqrt{h}+i t-T)}+s_k}\\
=&\frac{C^{\alpha}(\sqrt{h}+i t-T)^le^{\sqrt{h}+i t-T}}
{1-e^{\frac{1}{\alpha}(\sqrt{h}+it-u(1,0))}}
\prod_{k=1}^{m}\frac{z-s_k}{Ce^{\frac{1}{\alpha}(\sqrt{h}+i t-T)}+s_k}.
\end{align*}
This representation is analytic for $t\in [-\frac{\alpha\pi}{2}, \frac{\alpha\pi}{2}]$. This follows from
$$
\left|1-e^{\frac{1}{\alpha}(\sqrt{h}+it-u(1,0))}\right|\ge 1-e^{\frac{1}{\alpha}(\sqrt{h}-\Re(u(1,0)))}\ge 1-e^{-1}
$$
since $\Re(u(1,0))>\sqrt{h}+\alpha$, together with the inequality
\begin{align}\label{eq:sk}
\left|Ce^{\frac{1}{\alpha}(\sqrt{h}+it-T)}+s_k\right|=\sqrt{C^2e^{\frac{2}{\alpha}(\sqrt{h}-T)}+2s_kCe^{\frac{1}{\alpha}(\sqrt{h}-T)}\cos\left(\frac{t}{\alpha}\right)+s_k^2}
\ge s_k.
\end{align}
Consequently, the function is continuous in both variables over this interval. Applying the mean-value theorem \cite[Theorem 10]{Mcleod1965} then gives
$$
|f^{(l,*)}(\sqrt{h}+i t,z)-f^{(l,*)}(\sqrt{h}-i t,z)|\le 2t\|\partial_tf^{(l,*)}(\sqrt{h}+i t,z)\|_{C([-{\frac{\pi\alpha}{2}},{\frac{\pi\alpha}{2}}]\times S_\beta)}.
$$
Next, we split the integration at $t=\frac{\pi\alpha}{2}$ and obtain the following two bounds
\begin{align}\label{eq:3-1}
&\left|\int_0^{\frac{\pi\alpha}{2}}e^{-\frac{4n\pi}{\sqrt{h}}t}\left[f^{(l,*)}(\sqrt{h}+i t,z)- f^{(l,*)}(\sqrt{h}-i t,z)\right]e^{i\frac{2n\pi}{h}t^2}\,\mathrm{d}t\right|\notag\\
\le&2\|\partial_tf^{(l,*)}(\sqrt{h}+i t,z)\|_{C([-{\frac{\pi\alpha}{2}},{\frac{\pi\alpha}{2}}]\times S_\beta)} \int_0^{\frac{\pi\alpha}{2}}te^{-\frac{4n\pi}{\sqrt{h}}t}\,\mathrm{d}t\\
\le& \frac{h}{8n^2\pi^2}\|\partial_tf^{(l,*)}(\sqrt{h}+i t,z)\|_{C([-{\frac{\pi\alpha}{2}},{\frac{\pi\alpha}{2}}]\times S_\beta)}\notag
\end{align}
and
\begin{align}\label{eq:3-2}
&\left|\int_{\frac{\pi\alpha}{2}}^{2m_0\pi\alpha}e^{-\frac{4n\pi}{\sqrt{h}}t}\left[f^{(l,*)}(\sqrt{h}+i t,z)- f^{(l,*)}(\sqrt{h}-i t,z)\right]e^{i\frac{2n\pi}{h}t^2}\,\mathrm{d}t\right|\notag\\
\le&2\|f^{(l,*)}(\sqrt{h}+i t,z)\|_{C([-2m_0\alpha\pi,2m_0\alpha\pi]\times S_\beta)} \int_{\frac{\pi\alpha}{2}}^{2m_0\pi\alpha}e^{-\frac{4n\pi}{\sqrt{h}}t}\,\mathrm{d}t\\
\le&\|f^{(l,*)}(\sqrt{h}+i t,z)\|_{C([-2m_0\alpha\pi,2m_0\alpha\pi]\times S_\beta)}\frac{\sqrt{h}}{2n\pi}e^{-\frac{2n\pi^2\alpha}{\sqrt{h}}}\notag\\
\le& \frac{e^{-1}\sigma^2\alpha}{4n^2\pi^3}\|f^{(l,*)}(\sqrt{h}+i t,z)\|_{C([-2m_0\alpha\pi,2m_0\alpha\pi]\times S_\beta)},\notag
\end{align}
where we used  the inequality $te^{-t}\le e^{-1}$ (for $t>0$) with $t=\frac{2n\pi^2\alpha}{\sqrt{h}}$ and $h=\sigma^2\alpha^2$ to the second inequality in \eqref{eq:3-2}.

For the second term on the right-hand side of \eqref{eq:intestimate5}, we bound the sine factor by $\left|\sin\left(\frac{2n\pi}{h}t^2\right)\right|\le \frac{2n\pi}{h}t^2$, which leads to
\begin{align}\label{eq:3-3}
&\left|\int_0^{2m_0\pi\alpha}e^{-\frac{4n\pi}{\sqrt{h}}t}f^{(l,*)}(\sqrt{h}-i t,z)\left[e^{i\frac{2n\pi}{h}t^2}-e^{-i\frac{2n\pi}{h}t^2}\right]
\,\mathrm{d}t\right|\notag\\
\le&2\|f^{(l,*)}(\sqrt{h}-i t,z)\|_{C([0,2m_0\alpha\pi]\times S_\beta)}\int_0^{2m_0\pi\alpha}e^{-\frac{4n\pi}{\sqrt{h}}t}\left|\sin\left(\frac{2n\pi}{h}t^2\right)\right|
\,\mathrm{d}t\\
\le&\frac{4n\pi}{h}\|f^{(l,*)}(\sqrt{h}-i t,z)\|_{C([0,2m_0\alpha\pi]\times S_\beta)}\int_0^{2m_0\pi\alpha}t^2e^{-\frac{4n\pi}{\sqrt{h}}t}
\,\mathrm{d}t\notag\\
\le&\frac{\sqrt{h}}{8n^2\pi^2}\|f^{(l,*)}(\sqrt{h}-i t,z)\|_{C([0,2m_0\alpha\pi]\times S_\beta)}.\notag
\end{align}

Moreover, applying  the identities $z=-Ce^{\frac{1}{\alpha}(u(1,0)-T)}$ and $s_k=-Ce^{\frac{1}{\alpha}(u(1,k)-T)}$, together with the lower bounds for $t\in [-2m_0\alpha\pi,2m_0\alpha\pi]$
 $$\left|e^{\frac{1}{\alpha}(\sqrt{h}-it-u(1,k))}-1\right|\ge 1-e^{-1},\quad \left|e^{\frac{1}{\alpha}(u(1,k)-\sqrt{h}+it)}-1\right|\ge e-1, \quad k=0,1,\ldots,m,
$$
we obtain the following estimates
$$
\left|\frac{z}{Ce^{\frac{1}{\alpha}(\sqrt{h}-it-T)}+z}\right|=\frac{1}{\left|1-e^{\frac{1}{\alpha}(\sqrt{h}-it-u(1,0))}\right|}\le \frac{1}{1-e^{-1}},
$$
 $$
 \left|\frac{Ce^{\frac{1}{\alpha}(\sqrt{h}-it-T)}}{Ce^{\frac{1}{\alpha}(\sqrt{h}-it-T)}+z}\right|=\frac{1}{\left|1-e^{\frac{1}{\alpha}(u(1,0)-\sqrt{h}+it)}\right|}\le \frac{1}{e-1},
$$
$$
 \left|\frac{1}{Ce^{\frac{1}{\alpha}(\sqrt{h}-it-T)}+s_k}\right|=\frac{1}{s_k\left|1-e^{\frac{1}{\alpha}(\sqrt{h}-it-u(1,k))}\right|}\le \frac{1}{(1-e^{-1})s_k},
$$
and
$$
 \left|\frac{Ce^{\frac{1}{\alpha}(\sqrt{h}-it-T)}}{Ce^{\frac{1}{\alpha}(\sqrt{h}-it-T)}+s_k}\right|=\frac{1}{\left|1-e^{\frac{1}{\alpha}(u(1,k)-\sqrt{h}+it)}\right|}\le \frac{1}{e-1}.
$$
Combining these bounds with \eqref{eq:sk}, direct computations yield
\begin{align}\label{eq:l1}
\|f^{(l,*)}(\sqrt{h}+i t,z)\|_{C([-2m_0\alpha\pi,2m_0\alpha\pi]\times S_\beta)}
\le \frac{C^{\alpha}(T+2m_0\alpha\pi)^l\mathbb{T}_{m,\beta}}
{(1-e^{-1})^{m+1}\delta^{m}e^{T-\sqrt{h}}}=\mathcal{O}(T^le^{-T})
\end{align}
and
\begin{align}\label{eq:l2}
&\|\partial_tf^{(l,*)}(\sqrt{h}+i t,z)\|_{C([-{\frac{\pi\alpha}{2}},{\frac{\pi\alpha}{2}}]\times S_\beta)}\notag\\
\le& \frac{C^{\alpha}\mathbb{T}_{m,\beta}e^{\sqrt{h}-T}}
{(1-e^{-1})^{m+1}\delta^{m}}
\cdot\left\{l+\left(T+\frac{\pi\alpha}{2}\right)^l\left[1+\frac{1}{\alpha(e-1)}+\frac{m}{\alpha(e-1)}\right]\right\}\\
=&\mathcal{O}(T^le^{-T}).\notag
\end{align}

Therefore,  substituting \eqref{eq:l1} and \eqref{eq:l2} into \eqref{eq:3-1}--\eqref{eq:3-3} gives
\begin{align}\label{eq:intestimate3int}
&\left|\int_0^{2m_0\pi\alpha}\left[f^{(l,*)}(\sqrt{h}+i t,z)e^{i\frac{2n\pi}{h}(\sqrt{h}+i t)^2}-f^{(l,*)}(\sqrt{h}-i t,z)e^{-i\frac{2n\pi}{h}(\sqrt{h}-i t)^2}\right]
\,\mathrm{d}t\right|\notag\\
\le&\frac{\sigma^2\alpha^2}{8n^2\pi^2}\frac{C^{\alpha}\mathbb{T}_{m,\beta}e^{\sqrt{h}-T}}
{\delta^{m}(1-e^{-1})^{m+1}}
\cdot\left\{l+\left(T+\frac{\pi\alpha}{2}\right)^l\left[1+\frac{1+m}{\alpha(e-1)}\right]\right\}\\
&+\frac{\sigma\alpha}{8n^2\pi^2}\frac{C^{\alpha}(T+2m_0\alpha\pi)^l\mathbb{T}_{m,\beta}}
{(1-e^{-1})^{m+1}\delta^{m}e^{T-\sqrt{h}}}
\cdot\left[1+\frac{2\sigma}{e\pi}\right]\notag\\
=&\mathcal{O}(1)n^{-2} T^le^{-T},\notag
\end{align}
where the constant implicit in $\mathcal{O}(1)$  is independent of  $n$, $T$ and $z\in S_\beta'$.

\bigskip
Thus, all the estimates in \eqref{eq:fourier_cpath} on $hc_n$ ($n\ge 1$) are established.
Following an analogous approach, the corresponding estimates  for $c_n$ ($n\le -1$) can be obtained by replacing $n$ with $|n|$ and adapting the contour integration paths used in Fig. \ref{integral_contour} accordingly.

Therefore, from \eqref{eq:fourier_cpath}, Lemma \ref{infty}, Lemma \ref{residual}, and \eqref{eq:intestimate3int}, we obtain
\begin{align}\label{Poissonsum10}
&h\sum_{n\ne 0}|c_n|=\sum_{n\ne0}\bigg|\mathfrak{F}[{\bar f}^{(l)}]\big{(}\frac{n}{h}\big{)}\bigg|\notag\\
=&\mathcal{O}(1)\cdot\left\{T^l\sum_{n\ne0}\frac{e^{-T}}{n^2}
+\sum_{n\ne0}e^{-2|n|T}+ \sum_{n=1}^{+\infty}\sum_{j=1}^{2m_0}\sum_{k=0}^{m}\left|\mathrm{Res}\left(f^{(l,*)}(u,z)e^{(-1)^{j}i\frac{2n\pi}{h}u^2},u{(j,k)}\right)\right|
\right\}\\
=&\mathcal{O}(1)\cdot\left\{T^le^{-T}+ \sum_{n=1}^{+\infty}\sum_{j=1}^{2m_0}\sum_{k=0}^{m}\left|\mathrm{Res}\left(f^{(l,*)}(u,z)e^{(-1)^{j}i\frac{2n\pi}{h}u^2},u{(j,k)}\right)\right|
\right\}\notag\\
=&\mathcal{O}(1)\cdot\left\{\begin{array}{ll}
T^le^{-T}=T^le^{-\sigma\alpha\sqrt{N_1}}=T^le^{-\pi\eta^{-1}\sqrt{2(2-\beta)N_1\alpha}},&\sigma\le \sigma_{\rm opt},\\
e^{-\pi\eta\sqrt{2(2-\beta)N_1\alpha}},&\sigma> \sigma_{\rm opt},\notag
\end{array}\right.
\end{align}
where we have used the identity $$e^{-\eta^2T}=e^{-\eta\sigma\alpha\sqrt{N_1}\frac{\sigma_{\rm opt}}{\sigma}}=e^{-\pi\eta\sqrt{2(2-\beta)N_1\alpha}},$$
and the fact that $Te^{-T}=\mathcal{O}(e^{-\eta^2T})$ is uniformly bounded for all $T>0$ when $\eta<1$. Furthermore, the implicit constants in the $\mathcal{O}(1)$-notations in \eqref{Poissonsum10} are independent of $z\in S_\beta'$ and $T$.

\bigskip
Finally, combining the asymptotic bounds \eqref{Poissonsum10}, we can estimate the quadrature error in \eqref{errquad0} from Poisson's summation formula \eqref{QuadratureErrorfor_w}.
Note that from the definition of ${\bar f}^{(l)}$ and $N_th\ge (\kappa+1)^2T^2$,
\begin{align*}
&\int_{-\infty}^{+\infty}{\bar f}^{(l)}(u,z) \,\mathrm{d}u=2\int_{0}^{+\infty}{\bar f}^{(l)}(u,z) \,\mathrm{d}u\notag\\
=&2hf^{(l)}(h,z)+2\left(\int_{h}^0+\int_0^{N_th}\int_{N_th}^{+\infty}\right)f^{(l)}(u,z) \,\mathrm{d}u\\
=&2hf^{(l)}(h,z)+2I^{(l)}(z)\\
&+2\left\{\int_{h}^{0}+\int_{N_th}^{+\infty}\right\}\frac{1}{2 \sqrt{u}}\frac{zC^{\alpha}(\sqrt{u}-T)^le^{\sqrt{u}-T}}
{Ce^{\frac{1}{\alpha}(\sqrt{u}-T)}+z}
\left(\prod_{k=1}^{m}\frac{z-s_k}{Ce^{\frac{1}{\alpha}(\sqrt{u}-T)}+s_k}\right)
\,\mathrm{d}u\notag\\
=&2I^{(l)}(z)+2hf^{(l)}(h,z)+2\int_{h}^{0}\frac{1}{2 \sqrt{u}}\frac{zC^{\alpha}(\sqrt{u}-T)^le^{\sqrt{u}-T}}
{Ce^{\frac{1}{\alpha}(\sqrt{u}-T)}+z}
\left(\prod_{k=1}^{m}\frac{z-s_k}{Ce^{\frac{1}{\alpha}(\sqrt{u}-T)}+s_k}\right)
\,\mathrm{d}u\notag\\
&+2\int_{\sqrt{N_th}-T}^{+\infty}\frac{zC^{\alpha}t^l e^t}{Ce^{\frac{1}{\alpha}t}+z}
\left(\prod\limits_{k=1}^{m}\frac{z-s_k}{Ce^{\frac{1}{\alpha}t}+s_k}\right)\,\mathrm{d}t\notag\\
:=&2I^{(l)}(z)+E_{fd}^{(l)}(z),
\end{align*}
where the remainder $E_{fd}^{(l)}(z)$
satisfies, uniformly for $z\in S_\beta'$,
\begin{align}\label{eq:fd}
\left|E_{fd}^{(l)}(z)\right|\le&\frac{3\sqrt{h}\aleph^mT^l C^{\alpha}e^{\sqrt{h}-T}}{{\cal X}(\beta)}
+\frac{2\aleph^m\kappa(\kappa+\kappa T)^le^{-T}}{C^{m+1-\alpha}{\cal X}(\beta)}.
\end{align}
The bound \eqref{eq:fd} follows from the estimates
\begin{align*}
\left|f^{(l)}(h,z)\right|=&\left|\frac{1}{2 \sqrt{h}}\frac{zC^{\alpha}(\sqrt{h}-T)^le^{\sqrt{h}-T}}
{Ce^{\frac{1}{\alpha}(\sqrt{h}-T)}+z}
\left(\prod_{k=1}^{m}\frac{z-s_k}{Ce^{\frac{1}{\alpha}(\sqrt{h}-T)}+s_k}\right)\right|
\le\frac{\mathbb{T}_{m,\beta}T^l C^{\alpha}e^{\sqrt{h}-T}}{2\sqrt{h}\delta^{m}{\cal X}(\beta)},
\end{align*}
as well as the integral bounds
\begin{align*}
&\left|\int_{0}^{h}\frac{1}{2 \sqrt{u}}\frac{zC^{\alpha}(\sqrt{u}-T)^le^{\sqrt{u}-T}}
{Ce^{\frac{1}{\alpha}(\sqrt{u}-T)}+z}
\left(\prod_{k=1}^{m}\frac{z-s_k}{Ce^{\frac{1}{\alpha}(\sqrt{u}-T)}+s_k}\right)
\,\mathrm{d}u\right|\\
\le& \frac{\mathbb{T}_{m,\beta}T^l C^{\alpha}e^{\sqrt{h}-T}}{\delta^{m}{\cal X}(\beta)}\int_0^h\frac{1}{2\sqrt{u}}\,\mathrm{d}u=
\frac{\mathbb{T}_{m,\beta}T^l C^{\alpha}\sqrt{h}e^{\sqrt{h}-T}}{\delta^{m}{\cal X}(\beta)}
\end{align*}
and
\begin{align*}
\left|\int_{\sqrt{N_th}-T}^{+\infty}\frac{zC^{\alpha}t^l e^t}{Ce^{\frac{1}{\alpha}t}+z}
\left(\prod\limits_{k=1}^{m}\frac{z-s_k}{Ce^{\frac{1}{\alpha}t}+s_k}\right)\,\mathrm{d}t\right|\le \int_{\kappa T}^{+\infty}\frac{\mathbb{T}_{m,\beta}t^le^{-\frac{1}{\kappa}t}}{C^{m+1-\alpha}{\cal X}(\beta)}\,\mathrm{d}t\le \frac{\mathbb{T}_{m,\beta}\kappa(\kappa+\kappa T)^le^{-T}}{C^{m+1-\alpha}{\cal X}(\beta)}.
\end{align*}
These estimates are obtained by applying \eqref{ine:minus_u}, \eqref{ieq:positive_u},  and \eqref{eq:inequ_pos}.

Applying Poisson's summation formula \eqref{QuadratureErrorfor_w} now gives
\begin{align}\label{eq:quaderror1}
\left|h\sum_{n\ne0}c_n\right|=&\left|\int_{-\infty}^{+\infty}{\bar f}^{(l)}(u,z) \,\mathrm{d}u
-h\sum_{j=-\infty}^{+\infty}{\bar f}^{(l)}(jh,z)\right|\notag\\
=&\left|E_{fd}^{(l)}(z)+2\left(I^{(l)}(z)-r^{(l)}_{N_t}(z)\right)-2h\sum_{j=N_t+1}^{+\infty}f^{(l)}(jh,z)\right|\\
\ge&2\left|E_Q^{(l)}(z)\right|-\left|E_{fd}^{(l)}(z)\right|-\frac{2\aleph^m\kappa(\kappa+\kappa T)^le^{-T}}{C^{m+1-\alpha}{\cal X}(\beta)},\notag
\end{align}
where we have used the bound
\begin{align*}
2h\sum_{j=N_t+1}^{+\infty}|f^{(l)}(jh,z)|
\le& 2h\sum_{j=N_t+1}^{+\infty}
\left|\frac{1}{2 \sqrt{jh}}\frac{zC^{\alpha}(\sqrt{jh}-T)^le^{\sqrt{jh}-T}}
{Ce^{\frac{1}{\alpha}(\sqrt{jh}-T)}+z}
\left(\prod_{k=1}^{m}\frac{z-s_k}{Ce^{\frac{1}{\alpha}(\sqrt{jh}-T)}+s_k}\right)\right|\\
\le& 2h\sum_{j=N_t+1}^{+\infty}\frac{1}{2 \sqrt{jh}}
\frac{C^{\alpha-m-1}\mathbb{T}_{m,\beta}(\sqrt{jh}-T)^le^{\sqrt{jh}-T}}
{{\cal X}(\beta)e^{\frac{m+1}{\alpha}(\sqrt{jh}-T)}}\notag\\
\le&\frac{2\mathbb{T}_{m,\beta}}
{{\cal X}(\beta)C^{m+1-\alpha}}\int_{(\kappa+1)^2 T^2}^{+\infty}\frac{1}{2 \sqrt{u}}(\sqrt{u}-T)e^{-\frac{1}{\kappa}(\sqrt{u}-T)}\,\mathrm{d}u\notag\\
\le&\frac{2\mathbb{T}_{m,\beta}}
{{\cal X}(\beta)C^{m+1-\alpha}}\int_{\kappa T}^{+\infty}te^{-\frac{1}{\kappa}t}\,\mathrm{d}t
\le \frac{2\aleph^m\kappa(\kappa+\kappa T)^le^{-T}}{C^{m+1-\alpha}{\cal X}(\beta)}
\notag
\end{align*}
which follows from \eqref{eq:est}, \eqref{ieq:positive_u} and \eqref{eq:inequ_pos} together with the fact that $\sqrt{jh}-T>0$ for $j\ge N_t+1$.

Therefore, combining \eqref{Poissonsum10}-\eqref{eq:quaderror1} yields \eqref{errquad0} in the case $\Re(u(1,0))>\sqrt{h}+\alpha$.

\bigskip

\textbf{Case 2: $\Re(u_0(z))=T+\alpha\log\frac{x}{C}\le \sqrt{h}+\alpha$ ($z\in S_\beta\setminus S_\beta'$ and $z\not=0$)}.
We then define ${\bar f}^{(l)}$
  in \eqref{eq:extension_of_fnew11110} by setting $\hbar=\lambda_0h$ (with $\lambda_0$ given by \eqref{eq:ML}),
 as follows
\begin{align*}
{\bar f}^{(l)}(u,z)=
\begin{cases}
f^{(l)}(u,z), & u\ge \hbar,\\
f^{(l)}(h,z), & -\hbar\le u\le \hbar,\\
f^{(l)}(-u,z), & u\le-\hbar.
\end{cases}
\end{align*}
Define $F(v,z)=\sum_{k=-\infty}^{\infty}{\bar f}^{(l)}(kh+v,z),\ v\in[0,h]$; then $F(\cdot,z)$ is h-periodic and uniformly convergent in $v$.

As before, under the condition in \eqref{eq:onT}, the function $f^{(l,*)}(u,z)$
 is holomorphic in the strip
$$\Omega=\left\{u\in\mathbb{C}:\ |\Re(u)|\ge \sqrt{\lambda_0h}, \ |\Im(u)|\le  2m_0\alpha\pi\right\}$$
except for simple poles \eqref{eq:strippoles} at $u(j,k)$ in $\Omega$ for $j=1,2,\ldots, 2m_0$ and $k=1,2,\ldots,m$.

Consequently, from \eqref{eq:fourier_c10}, \eqref{eq:fourier_cpath00} and \eqref{eq:Cauchycon0} applying Cauchy's integral theorem to $f^{(l,*)}(y,z)e^{\pm i\frac{2n\pi}{h}y^2}$
for $n\ge 1$  we obtain an expression analogous to \eqref{eq:fourier_cpath}
\begin{align*}
hc_n=&
\int_{\lambda_0h}^{+\infty} {\bar f}^{(l)}(u,z)e^{-i\frac{2n\pi}{h}u}du+
\int_{\lambda_0h}^{+\infty} {\bar f}^{(l)}(u,z)e^{i\frac{2n\pi}{h}u}du\notag\\
=&\int_{\sqrt{\lambda_0h}-2im_0\alpha\pi}^{+\infty-2im_0\alpha\pi}f^{(l,*)}(y,z)e^{- i\frac{2n\pi}{h}y^2} \,\mathrm{d}y+ \int_{\sqrt{\lambda_0h}+2im_0\alpha\pi}^{+\infty+2im_0\alpha\pi}f^{(l,*)}(y,z)e^{ i\frac{2n\pi}{h}y^2} \,\mathrm{d}y\\
&+i\int_{0}^{2m_0\alpha\pi}\big{[}f^{(l,*)}(\sqrt{\lambda_0h}+it,z)e^{i\frac{2n\pi}{h}(\sqrt{\lambda_0h}+it)^2}\notag\\
&\hspace{1.6cm}-f^{(l,*)}(\sqrt{\lambda_0h}-it,z)e^{-i\frac{2n\pi}{h}(\sqrt{\lambda_0h}-it)^2})\big{]}
\, \mathrm{d}t\notag\\
&-2\pi i\sum_{j=1}^{m_0}\sum_{k=1}^{m}\mathrm{Res}\left[f^{(l,*)}(u,z)e^{-i\frac{2n\pi}{h}u^2},u(2j-1,k)\right]\notag\\
&-2\pi i\sum_{j=1}^{m_0}\sum_{k=1}^{m}\mathrm{Res}\left[f^{(l,*)}(u,z)e^{i\frac{2n\pi}{h}u^2},u(2j,k)\right].\notag
\end{align*}

Clearly, from the proof \eqref{eq:intestimate1} of Lemma \ref{infty}, we see that
\begin{align*}
&\left|\int_{\sqrt{\lambda_0h}\pm 2im_0\alpha\pi}^{+\infty\pm 2im_0\alpha\pi}f^{(l,*)}(y,z)e^{\pm i\frac{2n\pi}{h}y^2} \,\mathrm{d}y\right|\\
&\le\int_{\sqrt{\lambda_0h}}^{+\infty}\frac{|z|C^{\alpha}(|t-T|+2m_0\alpha\pi)^l e^{t-T}e^{-\frac{8m_0n\alpha\pi^2}{h}t}}
{\bigg|Ce^{\frac{1}{\alpha}(t-T)}+z\bigg|}
\cdot\prod_{k=1}^{m}\frac{|z-s_k|}{Ce^{\frac{1}{\alpha}(t-T)}+s_k}\,\mathrm{d}t\notag\\
&\le\int_{\sqrt{h}}^{+\infty}\frac{|z|C^{\alpha}(|t-T|+2m_0\alpha\pi)^l e^{t-T}e^{-\frac{8m_0n\alpha\pi^2}{h}t}}
{\bigg|Ce^{\frac{1}{\alpha}(t-T)}+z\bigg|}
\cdot\prod_{k=1}^{m}\frac{|z-s_k|}{Ce^{\frac{1}{\alpha}(t-T)}+s_k}\,\mathrm{d}t\notag\\
&\le \frac{\aleph^m(T+2m_0\alpha\pi)^l}
{n^2\sigma\alpha{\cal X}(\beta) C^{-\alpha}}e^{-T}+\frac{\kappa(2m_0\alpha\pi+\kappa)^l\aleph^m}
{{\cal X}(\beta)C^{m+1-\alpha}}e^{-2nT},
\end{align*}
and by the proof of Case (ii) of Lemma \ref{residual}
\begin{align*}
&\sum_{n=1}^{+\infty}\sum_{j=1}^{2m_0}\sum_{k=1}^{m}\Big|\mathrm{Res}\left[f^{(l,*)}(u,z)e^{i(-1)^j\frac{2n\pi}{h}u^2},u(j,k)\right]\Big|\\
 \le& \sum_{n=1}^{+\infty}\sum_{j=1}^{2m_0}\sum_{k=0}^{m}\Big|\mathrm{Res}\left[f^{(l,*)}(u,z)e^{i(-1)^j\frac{2n\pi}{h}u^2},u(j,k)\right]\Big|
 =\left\{\begin{array}{ll}
\mathcal{O}(T^le^{-T}),&\sigma\le\sigma_{\rm opt},\\
\mathcal{O}(e^{-\eta^2T}),&\sigma> \sigma_{\rm opt}.\end{array}\right.
\end{align*}

Moreover, utilizing the inequalities $\sqrt{\lambda_0h}\ge \sqrt{h}+2\alpha$, $\Re(u(1,0))\le \sqrt{h}+\alpha$, along with
 \begin{align*}
 \bigg|e^{\frac{1}{\alpha}(\sqrt{\lambda_0h}-it-u(1,0))}-1\bigg|\ge& e^{\frac{1}{\alpha}(\sqrt{\lambda_0h}-\Re(u(1,0)))}-1\ge e-1,\\
 \left|e^{\frac{1}{\alpha}(u(1,k)-\sqrt{\lambda_0h}+it)}-1\right|\ge& e-1, \quad k=1,2,\ldots,m
 \end{align*}
which are implied by the assumption on $T$ \eqref{eq:onT}, namely, $T+\alpha\log{\frac{s_k}{C}})>T+\alpha\log{\frac{1}{C}}>\sqrt{\lambda_0h}+\alpha$, we obtain, by arguing analogously to the proof of \eqref{eq:intestimate3int} with $\sqrt{\lambda_0h}$ in place of $\sqrt{h}$, that
\begin{align*}
\int_0^{2m_0\pi\alpha}\left|f^{(l,*)}(\sqrt{\lambda_0h}+i t,z)e^{i\frac{2n\pi}{h}(\sqrt{\lambda_0h}+i t)^2}-f^{(l,*)}(\sqrt{\lambda_0h}-i t,z)e^{-i\frac{2n\pi}{h}(\sqrt{\lambda_0h}-i t)^2}\right|
\,\mathrm{d}t=\mathcal{O} (T^le^{-T}).
\end{align*}
Hence, the Poisson summation formula holds analogously to \eqref{Poissonsum10}, yielding
\begin{align*}
h\sum_{n\ne 0}|c_n|=\sum_{n\ne0}\bigg|\mathfrak{F}[\bar f^{(l)}\big{(}\frac{ n}{h}\big{)}\bigg|
=\mathcal{O}(1)\cdot\left\{\begin{array}{ll}
T^le^{-T}=T^le^{-\sigma\alpha\sqrt{N_1}},&\sigma\le \sigma_{\rm opt}\\
e^{-\pi\eta\sqrt{2(2-\beta)N_1\alpha}},&\sigma> \sigma_{\rm opt}
\end{array}\right.
\end{align*}
where the constant implied in $\mathcal{O}(1)$ is independent of $z\in S_\beta\setminus S_\beta'$ and $T$.

Finally, the error estimate \eqref{errquad0} follows in the same way from
\begin{align*}
&\int_{-\infty}^{+\infty}{\bar f}^{(l)}(u,z) \,\mathrm{d}u
=2\lambda_0hf^{(l)}(\lambda_0h,z)+2\left(\int_{\lambda_0h}^0+\int_0^{N_th}\int_{N_th}^{+\infty}\right)f^{(l)}(u,z) \,\mathrm{d}u\\
=&2\lambda_0hf^{(l)}(\lambda_0h,z)+2I(z)
+\left\{\int_{\sqrt{\lambda_0h}-T}^{-T}+\int_{\sqrt{N_th}-T}^{+\infty}\right\}\frac{zC^{\alpha}t^l e^t}{Ce^{\frac{1}{\alpha}t}+z}
\left(\prod\limits_{k=1}^{m}\frac{z-s_k}{Ce^{\frac{1}{\alpha}t}+s_k}\right)\mathrm{d}t.
\end{align*}

These two cases together complete the proof of \eqref{errquad0}.
\end{proof}

\begin{remark} (i) The optimal pole clustering parameter $\sigma_{\rm opt}=\frac{\pi\sqrt{2(2-\beta)}}{\sqrt{\alpha}}$ can alternatively be derived from the quadrature error $E_Q^{(l)}$ for $l=0$, given the truncated error $E_T^{(0)}=\mathcal{O}(e^{-T})$. By expressing the residual sum in the Poisson summation formula \eqref{Poissonsum10} in terms of $\eta^2=\frac{2\pi^2\alpha(2-\beta)}{h}$ and $h=\sigma^2\alpha^2$ (as defined in \eqref{asy0000} and \eqref{asy0k}), we obtain
\begin{align}\label{eq:optp}
&h\sum_{n\ne 0}|c_n|=\sum_{n\ne0}\bigg|\mathfrak{F}[{\bar f}^{(l)}]\big{(}\frac{n}{h}\big{)}\bigg|\notag\\
=&\mathcal{O}(1)\cdot\left\{e^{-T}+ \sum_{n=1}^{+\infty}\sum_{j=1}^{2m_0}\sum_{k=0}^{m}\left|\mathrm{Res}\left(f^{(l,*)}(u,z)e^{(-1)^{j}i\frac{2n\pi}{h}u^2},u{(j,k)}\right)\right|
\right\}\\
=&\mathcal{O}(1)\cdot\left\{e^{-T}+ \max_{x\in [x^*,1]} x^{\alpha}e^{-\frac{2\pi^2\alpha(2-\beta)}{h} (T+\alpha\log{\frac{x}{C}})}
+e^{-\frac{2}{2-\beta}\frac{2\pi^2\alpha(2-\beta)T}{h}}
\right\}.\notag
\end{align}
Together with the estimate in \eqref{eq:balance}, this gives
\begin{align*}
\max_{x\in [x^*,1]} x^{\alpha}e^{-\frac{2\pi^2\alpha(2-\beta)}{h} (T+\alpha\log{\frac{x}{C}})}=&\mathcal{O}(1)\cdot\max\left\{e^{-T},e^{-\frac{2\pi^2\alpha(2-\beta)T}{h}}\right\}\\
=&\mathcal{O}(1)\cdot\max\left\{e^{-\sigma\alpha\sqrt{N_1}},e^{-\frac{2\pi^2(2-\beta)\sqrt{N_1}}{\sigma}}\right\}.
\end{align*}
Minimizing the upper bound in \eqref{eq:optp} by balancing the dominant exponential terms yields $\sigma_{\rm opt}=\frac{\pi\sqrt{2(2-\beta)}}{\sqrt{\alpha}}$.

(ii) Moreover, the proof of Lemma \ref{residual} (specifically, the upper bounds in the final inequality of \eqref{eq:res1} and the penultimate line of \eqref{asy0000}) demonstrates that the error bound $E_Q^{(l)}$ is attainable on the V-shaped domain $V_\beta$. Consequently, $\sigma_{\rm opt}$  serves as the optimal pole clustering parameter for the LP approximation \eqref{eq:rat} on  $V_\beta$ for $0\le \beta<2$.
\end{remark}

\section{Proofs of Theorems \ref{mainthm} and \ref{mainthm2} and their generalizations}\label{sec:4}

\begin{proof}[Proof of Theorem \ref{mainthm}]
The conclusion of Theorem \ref{mainthm} follows directly from inequalities \eqref{ratappforzalpha} and \eqref{boundforEbar}, combined with the result of Theorem \ref{eq:quaderrthm}. We obtain the following estimate
\begin{align}\label{rat01}
|E(z)| &= |z^\alpha-r_N(z)| \notag \\
&\le \frac{2|\sin(\alpha\pi)|\aleph^mC^{\alpha}}{\alpha\pi{\cal X}(\beta)e^T}
+ \frac{|\sin(\alpha\pi)|\left[2\aleph^m+3\aleph^m\right]}{(1-(\alpha))\pi C^{1-\alpha}{\cal X}(\beta)e^{T}}
+ \frac{|\sin(\alpha\pi)|}{\alpha\pi}\left|E_Q^{(0)}(z)\right| \\
&= \max\left\{\frac{|\sin(\alpha\pi)|}{\alpha\pi},\frac{|\sin(\alpha\pi)|}{(1-(\alpha))\pi}\right\}
\left\{\begin{array}{ll}
\mathcal{O}(e^{-\sigma\alpha\sqrt{N_1}}), & \sigma\le \sigma_{\rm opt}, \\
\mathcal{O}(e^{-\pi\eta\sqrt{2(2-\beta)N_1\alpha}}, & \sigma> \sigma_{\rm opt},
\end{array}\right. \notag \\
&= \max\left\{\frac{|\sin(\alpha\pi)|}{\alpha\pi},\frac{|\sin(\alpha\pi)|}{(1-(\alpha))\pi}\right\}
\left\{\begin{array}{ll}
\mathcal{O}(e^{-\sigma\alpha\sqrt{N}}), & \sigma\le \sigma_{\rm opt}, \\
\mathcal{O}(e^{-\pi\eta\sqrt{2(2-\beta)N\alpha}}), & \sigma> \sigma_{\rm opt}.
\end{array}\right. \notag
\end{align}
In the final step of \eqref{rat01}, we utilized the fact that
\begin{align*}
\sqrt{N_1} = \sqrt{N-N_2} = \sqrt{N}\left[1+\mathcal{O}\left(\frac{N_2}{N}\right)\right] = \sqrt{N}+\mathcal{O}(1) =: \sqrt{N}+c_N,
\end{align*}
where $c_N < 0$ is uniformly bounded and independent of $N$.
\end{proof}

\begin{proof}[Proof of Theorem \ref{mainthm2}]
Analogously, from \eqref{ratappforzalphalog} and \eqref{boundforwidetildeE} together with Theorem \ref{eq:quaderrthm}, taking $m=\lceil\alpha\rceil$ gives
\begin{align}\label{rat01log}
|\widetilde{E}(z)|=&|z^\alpha\log z-\widetilde{r}_N(z)|\notag\\
\le&\frac{|\sin(\alpha\pi)|}{\alpha^2\pi}\bigg[
\frac{2(1+T)\aleph^mC^{\alpha}}{{\cal X}(\beta)e^{T}}
\left(1+\frac{\kappa^2}{C^{m+1}}\right)
+\frac{(3\kappa^2 T+ T)\aleph^{m}}{C^{m+1-\alpha}{\cal X}(\beta)}e^{-T}+\left|E_Q^{(1)}(z)\right|\bigg]\\
&+\left|\frac{\sin(\alpha\pi)\log{C}}{(-1)^m\alpha\pi}
+\frac{\cos(\alpha\pi)}{(-1)^{m}\alpha}\right|
\bigg[
\frac{2\aleph^mC^{\alpha}}{{\cal X}(\beta)e^T}\left(1+\frac{\kappa }{C^{m+1}}\right)+\frac{3\kappa\aleph^mC^{\alpha-m-1}}{e^{T}{\cal X}(\beta)}
+\left|E_Q^{(0)}(z)\right|\bigg]\notag\\
=&\left\{\begin{array}{ll}
\mathcal{O}\sqrt{N_1}e^{-\sigma\alpha\sqrt{N_1}}),&\sigma\le \sigma_{\rm opt},\\
\mathcal{O}(e^{-\pi\eta\sqrt{2(2-\beta)N_1\alpha}},&\sigma> \sigma_{\rm opt},
\end{array}\right.\notag\\
=&\left\{\begin{array}{ll}
\mathcal{O}(\sqrt{N}e^{-\sigma\alpha\sqrt{N}}),&\sigma\le \sigma_{\rm opt},\\
\mathcal{O}(e^{-\pi\eta\sqrt{2(2-\beta)N\alpha}},&\sigma> \sigma_{\rm opt}.\notag
\end{array}\right.
\end{align}
In the derivation of the second equality of \eqref{rat01log} for the case $\sigma> \sigma_{\rm opt}$, we used the identity
$$Te^{-T}=Te^{(\eta^2-1)T}e^{-\eta^2T}=\mathcal{O}(e^{-\eta^2T}),$$ which holds uniformly for all  $T\ge 0$ since $\eta<1$.

In particular, when $\alpha$ is a positive integer, the first inequality in \eqref{rat01log} yields
\begin{align*}
\left|\widetilde{E}(z)\right|
\le&\frac{|\cos(\alpha\pi)|}{\alpha}
\bigg[
\frac{2\aleph^mC^{\alpha}}{{\cal X}(\beta)e^T}\left(1+\frac{\kappa }{C^{m+1}}\right)
+\frac{3\kappa\aleph^me^{-T}}{C^{m+1-\alpha}{\cal X}(\beta)}
+\left|E_Q^{(0)}(z)\right|\bigg],
\end{align*}
which implies \eqref{eq: rate2}. This completes the proof of Theorem \ref{mainthm2}.
\end{proof}

\begin{theorem}\label{mainthmG}
Let $\alpha$, $\sigma>0$ and set $\sigma_{\rm opt}=\frac{\pi\sqrt{2(2-\beta)}}{\sqrt{\alpha}}$ and
$\eta=\frac{\sigma_{\rm opt}}{\sigma}$. If $g(z)$ is analytic in a neighborhood of $S_\beta$, then there exist coefficients $\{a^{(g)}_j\}_{j=1}^{N_1}$, $\{\widetilde{a}^{(g)}_j\}_{j=1}^{N_1}$ and polynomials $P^{(g)}_{N_2}$, $\widetilde{P}^{(g)}_{N_2}$ with degree
$N_2 = \mathcal{O}(\sqrt{N_1})=\mathcal{O}(\sqrt{N})$ such that the  LP approximations of the form \eqref{eq:rat} furnished with the poles \eqref{eq:tapered2}
satisfy the following uniform error estimates on $S_\beta$ as $N \to\infty$:
\allowdisplaybreaks
\begin{align}
\|r^{(g)}_N(z)- g(z)z^\alpha\|_{C(S_\beta)}=
\left\{\begin{array}{ll}
\mathcal{O}\left(e^{-\sigma\alpha\sqrt{N}}\right),&\sigma\le \sigma_{\rm opt},\\
\mathcal{O}\left(e^{-\pi\eta\sqrt{2(2-\beta)N\alpha}}\right),&\sigma> \sigma_{\rm opt},
\end{array}\right.\label{eq: rateG1}\\
\|\widetilde{r}^{(g)}_N(z)- g(z)z^\alpha\log{z}\|_{C(S_\beta)}=
\left\{\begin{array}{ll}
\mathcal{O}\left(\sqrt{N}e^{-\sigma\alpha\sqrt{N}}\right),&\sigma\le \sigma_{\rm opt},\\
\mathcal{O}\left(e^{-\pi\eta\sqrt{2(2-\beta)N\alpha}}\right),&\sigma> \sigma_{\rm opt},
\end{array}\right.\label{eq: rateG2}
\end{align}
and, when  $\alpha$ is  a positive integer,
\begin{equation}\label{eq: rateG22}
\|\widetilde{r}_N^{(g)}(z)-z^\alpha\log{z}\|_{C(S_\beta)}=\left\{\begin{array}{ll}
\mathcal{O}(e^{-\sigma\alpha\sqrt{N}}),&\sigma\le \sigma_{\rm opt},\\
\mathcal{O}(e^{-\pi\eta\sqrt{2(2-\beta)N\alpha}}),&\sigma> \sigma_{\rm opt}.
\end{array}\right.
\end{equation}
\end{theorem}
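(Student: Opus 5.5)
The plan is to reduce the general case to Theorems \ref{mainthm} and \ref{mainthm2} by splitting $g$ at the origin. We separate the singular part of $g(z)z^\alpha$ into finitely many pure powers, and put everything that is analytic on $S_\beta$ into the polynomial part via Runge's theorem.

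First, since $g$ is analytic on a neighborhood of $S_\beta$, it is analytic on a disk $|z|<\rho_1$ around $0$. Fix an integer $K$ (to be chosen depending only on $\alpha$ and $\sigma$, or even growing slowly) and write
\[
g(z)=\sum_{k=0}^{K-1}g_kz^k+z^K g_K(z),
\]
with $g_K$ analytic on the same neighborhood. Then
\[
g(z)z^\alpha=\sum_{k=0}^{K-1}g_kz^{\alpha+k}+z^{K+\alpha}g_K(z).
\]
Each term $z^{\alpha+k}$ is handled by Theorem \ref{mainthm} with exponent $\alpha+k$, but with the same poles \eqref{eq:tapered2} and the same $\sigma$. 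Here I expect the main subtlety. The rate in Theorem \ref{mainthm} for exponent $\alpha+k$ is $e^{-\sigma(\alpha+k)\sqrt N}$ when $\sigma\le\sigma_{\rm opt}(\alpha+k)$, and $e^{-2\pi^2(2-\beta)\sqrt N/\sigma}$ otherwise (using $\pi\eta\sqrt{2(2-\beta)N\alpha}=2\pi^2(2-\beta)\sqrt N/\sigma$, which does not depend on the exponent). Hence each term is $\mathcal{O}(\max\{e^{-\sigma\alpha\sqrt N},e^{-2\pi^2(2-\beta)\sqrt N/\sigma}\})$, which is exactly the claimed rate for exponent $\alpha$. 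However, the proof of Theorem \ref{mainthm} used the specific step $h=\sigma^2\alpha^2$ tied to $\alpha$. So I would redo the construction with the fixed $h,T$ determined by $\alpha$ and check that the quadrature and truncation bounds for $z^{\alpha+k}$ are still at least as good. The truncation constant $\kappa$ changes with $m=\lfloor\alpha+k\rfloor$, and the substitution becomes $y=Ce^{t/\alpha}$ with integrand factor $e^{(\alpha+k)t/\alpha}$. The left tail then decays faster ($e^{-(1+k/\alpha)T}$), and the right tail is handled by choosing $m$ larger. The residue analysis of Lemma \ref{residual} carries over, with $x^{\alpha}$ replaced by $x^{\alpha+k}$, which only helps in \eqref{eq:balance}.

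The remainder $z^{K+\alpha}g_K(z)$ has to be handled next. Choose $K$ with $K+\alpha$ so large that the pole-free case is trivial? That is not possible: $z^{K+\alpha}$ remains non-analytic at $0$ unless $\alpha$ is an integer. Instead, I would apply the representation \eqref{eq:cint_gener} directly to $g(z)z^\alpha$ by Cauchy's formula. A cleaner alternative is to use the Stenger-type representation
\[
g(z)z^\alpha=\frac{\sin\alpha\pi}{(-1)^m\pi}\int_0^{\infty}\frac{z y^{\alpha-1}g(-y)\cdots}{y+z}\,dy+\text{analytic},
\]
but $g(-y)$ is not available for large $y$. Therefore I would go back to the series approach, with $K=K(N)\sim c\sqrt N$, and bound $|g_k|\le M\rho_1^{-k}$. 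The resulting sum $\sum_k|g_k|\,\|z^{\alpha+k}-r^{(k)}_N\|$ then converges geometrically, provided the constants in Theorem \ref{mainthm} grow at most like $\aleph^{m}$ with $\aleph\rho_1^{-1}<1$ after a rescaling of $S_\beta$. This is arranged by first rescaling $z\mapsto \epsilon z$ so that $S_\beta$ sits well inside the disk of analyticity.

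Second, the far part: by Runge's theorem as in Subsection \ref{subsec22}, the truncation $\sum_{k\ge K}g_kz^{\alpha+k}$ is $\mathcal{O}(\rho^{-K})$ small after rescaling, and therefore negligible once $K\ge c\sqrt N$. All polynomial pieces together have degree $\mathcal{O}(\sqrt{N_1})$. The $\log$ case \eqref{eq: rateG2}–\eqref{eq: rateG22} is identical, using Theorem \ref{mainthm2} termwise. The main obstacle is the uniformity in $k$ of the constants in Theorems \ref{mainthm}/\ref{mainthm2} when the poles are fixed by $\alpha$ and not by $\alpha+k$.
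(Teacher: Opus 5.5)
There is a genuine gap. Your argument works only if the Taylor series of $g$ at $0$ converges on all of $S_\beta$ with a large margin, and the hypothesis does not give that. As you note yourself, a finite $K$ achieves nothing, because the remainder $z^{K+\alpha}g_K(z)$ is of exactly the same type as $g(z)z^\alpha$. So you are forced to let $K\to\infty$ and sum $\sum_k g_k z^{\alpha+k}$ over $S_\beta$.

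But ``$g$ analytic in a neighbourhood of $S_\beta$'' does not mean analytic on a disk of radius $>1$ about the origin. For example, $g(z)=1/(z+\tfrac{1}{10})$ is analytic near $S_\beta$ for every $\beta\in[0,2)$, yet its Taylor series at $0$ diverges for $|z|>\tfrac{1}{10}$. Rescaling $z\mapsto\epsilon z$ cannot repair this, because it shrinks the sector and the disk of convergence by the same factor; the ratio of the two radii is scale-invariant. Your ``far part'' bound is really a Taylor-tail estimate, not Runge's theorem, and fails for the same reason.

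Even when the disk of convergence is large, uniformity in $k$ fails. The bounds obtainable from the proof of Theorem \ref{mainthm} for exponent $\alpha+k$ grow geometrically in $k$. Examples are the factor $\aleph^m$ with $m=\lfloor\alpha+k\rfloor$, and the Chebyshev-node factor $2^{m-1}/(m\omega^{m-1})$ in the residues at $u(2j-1,k)$. Summing $\sum_k|g_k|\,\|z^{\alpha+k}-r^{(k)}_N\|_{C(S_\beta)}$ would therefore need $|g_k|\le M\rho_1^{-k}$ with $\rho_1$ well above these growth rates. That is analyticity on a disk several times larger than the sector, which is not assumed.

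(Your worry that $h=\sigma^2\alpha^2$ ties the construction to $\alpha$ is unnecessary: the poles \eqref{eq:tapered2} depend only on $\sigma$. Your per-$k$ rate comparison is correct; the approach simply cannot be summed.)

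The missing idea is that you do not need to decompose $g(z)z^\alpha$ at all. Take the LP approximant $r_N$ of $z^\alpha$ from Theorem \ref{mainthm}. By Runge's theorem, choose a polynomial $P$ of degree $\mathcal{O}(\sqrt{N_1})$ with $\|g-P\|_{C(S_\beta)}=\mathcal{O}(e^{-T})$, and use $P\,r_N$ as the approximant.

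By partial fractions, $P(z)/(z-p_j)=P(p_j)/(z-p_j)+Q_j(z)$ with $\deg Q_j=\deg P-1$. Hence $P\,r_N$ is again of the form \eqref{eq:rat}, with the same poles, residues $a_jP(p_j)$, and a polynomial part of degree $\mathcal{O}(\sqrt{N_1})$. The error splits as
\[
g(z)z^\alpha-P(z)r_N(z)=g(z)\bigl(z^\alpha-r_N(z)\bigr)+\bigl(g(z)-P(z)\bigr)r_N(z).
\]
The first term is $\|g\|_{C(S_\beta)}$ times the rate of Theorem \ref{mainthm}. The second is $\mathcal{O}(e^{-T})$, since $r_N$ is uniformly bounded on $S_\beta$. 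Because $e^{-T}=e^{-\sigma\alpha\sqrt{N_1}}$ is at least as small as the claimed rate in both regimes ($e^{-T}\le e^{-\eta^2T}$ when $\eta<1$), this yields \eqref{eq: rateG1}. The same argument with $\widetilde r_N$ from Theorem \ref{mainthm2} gives \eqref{eq: rateG2}--\eqref{eq: rateG22}. This is exactly the paper's proof, and it needs no information about $g$ beyond analyticity near $S_\beta$.
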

\begin{proof}
By Runge's approximation theorem (see Subsection \ref{subsec22}), there exists a polynomial $P^{(g)}_{N_2}(z)$ of degree $N_2=\mathcal{O}(\sqrt{N_1})$
such that
$$
\|g-P^{(g)}_{N_2}\|_{C(S_{\beta})}=\mathcal{O}(e^{-T}).
$$
Moreover, \eqref{rat01} and \eqref{rat01log} imply that both  $r_N$ and $\widetilde{r}_N$ in Theorems \ref{mainthm} and Theorem \ref{mainthm2}
are uniformly bounded on $S_\beta$. Hence the products
$$
P^{(g)}_{N_2}(z)r_N(z):=r^{(g)}_N(z), \quad\quad P^{(g)}_{N_2}(z)\widetilde{r}_N(z):=\widetilde{r}_N^{(g)}(z)
$$
are again LP approximations of the form \eqref{eq:rat}. From
\begin{align*}
\left|g(z)z^\alpha-P^{(g)}_{N_2}(z)r_N(z)\right|
&\le\|g(z)(z^\alpha-r_N(z))\|_{C(S_{\beta})}+|r_{N}(z)|\mathcal{O}(e^{-T}),\\
|g(z)z^\alpha\log{z}-P^{(g)}_{N_2}(z)\widetilde{r}_N(z)|
&\le\|g(z)(z^\alpha\log{z}-\widetilde{r}_N(z))\|_{C(S_{\beta})}+|\widetilde{r}_{N}(z)|\mathcal{O}(e^{-T}),
\end{align*}
and the bounds in Theorem \ref{mainthm} and Theorem  \ref{mainthm2}, we obtain the estimates \eqref{eq: rateG1}--\eqref{eq: rateG22}, which completes the proof.
\end{proof}

\section{Applied to corner singularities}\label{sec:5}
Suppose the corner domain $\Omega$  is determined by vertices $w_1,\ldots,w_\ell$. 
With the aid of the decomposition for Cauchy integrals in Gopal and Trefethen \cite[Theorem 2.3]{Gopal2019},  Theorem \ref{mainthm} and Theorem \ref{mainthm2} can be extended to the case in
which the domain $\Omega$ is a straight or curvy polygon with each internal angle $<2\pi$.

\begin{definition}\label{def}
A domain $\Omega$ is said to satisfy generalized convexity if, for each corner point $w_k$ ($k=1,2,\ldots,\ell$), the two tangent rays at $w_k$ lie entirely outside $\Omega$.
\end{definition}

From Definition \ref{def}, it follows that both any convex polygon and the domain depicted in Fig. \ref{tangent_covering_domain} possess generalized convexity. Specifically, for each vertex $w_k$, there exists a minimal sector $S_{\beta_k}$ centered at $w_k$  with interior angle $\varphi_k\pi(=\beta_k\pi)$ such that  $\Omega\subset S_{\beta_k}$. Subsequently, by applying the rigorous decomposition method presented in \cite[the proof of Theorem 2.3]{Gopal2019}, we express the function $f(z)$ as a superposition of
$2\ell$ Cauchy-type contour integrals
\begin{align}\label{decompose_singularity}
f(z)=\frac{1}{2\pi i}\sum_{k=1}^\ell\int_{\Lambda_k}\frac{f(\zeta)}{\zeta-z}\mathrm{d}\zeta
+\frac{1}{2\pi i}\sum_{k=1}^\ell\int_{\Gamma_k}\frac{f(\zeta)}{\zeta-z}\mathrm{d}\zeta
=:\sum_{k=1}^\ell f_k(z)+\sum_{k=1}^\ell g_k(z),
\end{align}
where  $\Lambda_k$ consists of the two sides of an exterior bisector at $w_k$, and $\Gamma_k$ connects the end of the slit contour at vertex $w_k$ to the beginning of the slit contour at vertex $w_{k+1}$ (denote $w_{\ell+1}=w_1$). Additionally, 
each $g_k$ is holomorphic in a larger domain $\mathbb{C}\setminus\Gamma_k$ including $\Omega$, and $f_k$ holomorphic in a slit-disk region $\mathbb{C}\setminus\Gamma_k$ around $w_k$ with the slit line $\Lambda_k$, $k=1,\cdots,\ell$. For further details, please refer to Gopal and Trefethen \cite{Gopal2019}, with Fig. \ref{decompose_path} illustrating an example.

\begin{figure}[htbp]
\centerline{\includegraphics[width=6cm]{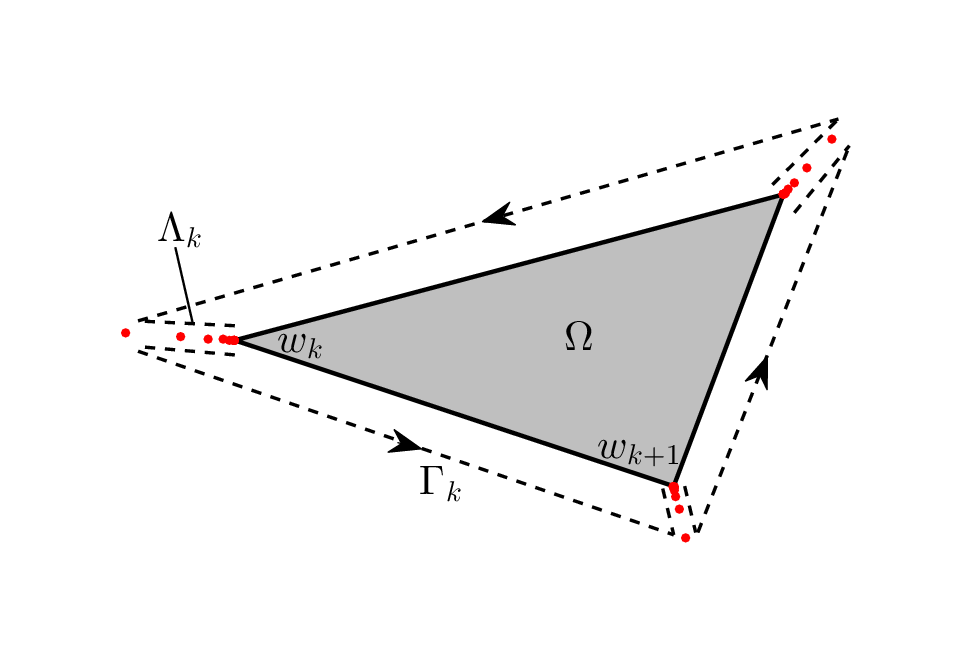}}
\caption{This figure is cited from \cite[{\sc Fig. 3}]{Gopal2019}: A holomorphic function $f(z)$ defined in the corner domain $\Omega$ is decomposed as the sum of $2\ell$ Cauchy-type integrals: $\sum_{k=1}^\ell f_k(z)+\sum_{k=1}^\ell g_k(z)$, with $f_k(z)=\frac{1}{2\pi i}\int_{\Lambda_k}\frac{f(\zeta)}{\zeta-z}d\zeta$ along the two sides of an exterior bisector slit to each corner, and $g_k(z)=\frac{1}{2\pi i}\int_{\Gamma_k}\frac{f(\zeta)}{\zeta-z}d\zeta$ along each line segment connecting the ends of those slit contours.}
\label{decompose_path}
\end{figure}

Suppose $f$ is defined on $\Omega$ satisfying  generalized convexity,  with isolated branch points at the vertices $\{w_k\}_{k=1}^{\ell}$, then $f$ can be efficiently approximated by
an LP approximation $r_n(z)$.
The approximation employs lightning poles $\{p_{k,j}\}_{j=1}^{N_{1,k}}$ that are tapered exponentially clustered with parameter $\sigma_k$ toward each corner $w_k$ of $\mathcal{S}_{\beta_k}$ along the exterior bisector (see Fig. \ref{tangent_covering_domain}), and is given in Gopal and Trefethen \cite{Gopal2019} by
\begin{align}\label{LP_cornerdomain}
r_n(z)=\sum_{k=1}^\ell\sum_{j=1}^{N_{1,k}}\frac{a_{k,j}}{z-p_{k,j}}+\sum_{j=0}^{N_2} b_{j}z^j.
\end{align}

\begin{theorem}\label{corner}
Let $\Omega$ be a generalized-convex domain (straight or curved) with corner  points $\{w_k\}_{k=1}^{\ell}$. For each $w_k$, let $\mathcal{S}_{\beta_k}$ be the smallest sector of angle $\beta_k$, aligned with the local tangent rays, such that $\Omega\subseteq \mathcal{S}_{\beta_k}$.
Assume $f$
 is analytic in a neighborhood of $\Omega$ except at the $w_k$ (defined on the slit plane along the exterior-angle bisectors), and admits the decomposition \eqref{decompose_singularity} with
 \begin{align}\label{DomainLP}
f_k(z)=& (z -w_k)^{\alpha_k}h_k(z)+\phi_k(z), \hspace{2.5cm} k=1,\ldots,K_1,\notag\\
f_k(z)=& (z -w_k)^{\alpha_k}\log (z-w_k)h_k(z)+\phi_k(z),\quad k=K_1+1,\ldots,\ell,
\end{align}
where each $\alpha_k>0$ and the functions $h_k(z)$, $\phi_k(z)$
are analytic in a neighbourhood of $\Omega$. Then there exists an LP approximant $r_n(z)$ of the form \eqref{LP_cornerdomain} where the lightning poles $\{p_{k,j}\}_{j=1}^{N_1}$ are determined by the parameters
$$\sigma_k=\frac{\pi\sqrt{2(2-\beta_k)}}{\sqrt{\alpha_k}} \quad\quad (k=1,2,\ldots,\ell),$$
$N_1$, $N_2$ satisfy  $N_2 = \mathcal{O}(\sqrt{N_1})=\mathcal{O}(\sqrt{N})$ with $N=N_1+N_2$, and  $n=\ell N_1+N_2$. For such an approximation the following uniform error estimates hold on $\Omega$ as $N\rightarrow \infty$:
\begin{equation}\label{eq: corrate0}
\|r_n-f\|_{C(\Omega)}=\max\left\{\begin{array}{ll}
\mathcal{O}\left(e^{-\pi\sqrt{2(2-\beta_k)N\alpha_k}}\right),&k=1,\ldots,K_1,\\
\mathcal{O}\left(\sqrt{N}e^{-\pi\sqrt{2(2-\beta_k)N\alpha_k}}\right),&k=K_1+1,\ldots,\ell
\end{array}\right\}.
\end{equation}

 In particular, set
 \begin{align}\label{eq:clusteropt}
 \alpha=\min_{1\le k\le \ell}\alpha_k,\quad\quad  \beta=\max_{1\le k\le \ell}\beta_k, \quad\quad\sigma=\frac{\pi\sqrt{2(2-\beta)}}{\sqrt{\alpha}},
 \end{align}
 and define
 $$\alpha'=\min_{1\le k\le K_1}\alpha_k,\quad\quad
 \alpha''=\min_{K_1+1\le k\le \ell}\alpha_k.$$
 Then
 \begin{equation}\label{eq: corrate1}
\|r_n-f\|_{C(\Omega)}=\left\{\begin{array}{ll}
\mathcal{O}\left(\sqrt{N}e^{-\pi\sqrt{2(2-\beta)N\alpha}}\right),&\alpha'\ge\alpha'',\\
\mathcal{O}\left(e^{-\pi\sqrt{2(2-\beta)N\alpha}}\right),&\mbox{otherwise}.\end{array}\right.
\end{equation}
When $\alpha''$
is a positive integer, the factor $\sqrt{N}$ in \eqref{eq: corrate1} can be omitted.
\end{theorem}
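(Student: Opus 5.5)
We reduce Theorem \ref{corner} to Theorem \ref{mainthmG} applied locally at each corner, and glue the pieces with the decomposition \eqref{decompose_singularity}. First, by \eqref{decompose_singularity}, $f=\sum_k f_k+\sum_k g_k$. Each $g_k$ is analytic on a neighbourhood of $\overline\Omega$, since $\Gamma_k$ stays at positive distance from $\Omega$. The same holds for each $\phi_k$ in \eqref{DomainLP}. Hence $G:=\sum_k g_k+\sum_k\phi_k$ is analytic on a neighbourhood of $\overline\Omega$. Runge's theorem, in the quantitative Fekete/Hermite form used in Subsection \ref{subsec22}, then gives a polynomial $Q_{N_2}$ with $\|G-Q_{N_2}\|_{C(\Omega)}=\mathcal O(\rho_0^{-N_2})$. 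Choosing $N_2\ge c\sqrt{N_1}$ with $c$ large makes this $\mathcal O(e^{-\max_k\sigma_k\alpha_k\sqrt{N_1}})$, which is below every target rate.

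Next, fix a corner $w_k$ and translate and rotate so that $w_k=0$ and the exterior bisector is the negative real axis. The sector $\mathcal S_{\beta_k}$ then becomes $S_{\beta_k}$, after scaling so that $\Omega$ lies in the unit-radius sector; scaling only changes $C$ and analytic factors. By generalized convexity $\Omega\subset\mathcal S_{\beta_k}$, so a uniform estimate on $S_{\beta_k}$ implies one on $\Omega$. I would apply Theorem \ref{mainthmG} with $g=h_k$ and $\sigma=\sigma_k$, i.e.\ with $\eta=1$. This gives coefficients $a_{k,j}$ and a polynomial $P^{(k)}_{N_2}$ with error $\mathcal O(e^{-\pi\sqrt{2(2-\beta_k)N\alpha_k}})$ for $k\le K_1$. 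For the logarithmic corners it gives an extra $\sqrt N$ factor, which disappears when $\alpha_k$ is an integer.

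One technical point here is that Theorem \ref{mainthmG} requires $h_k$ to be analytic near the whole sector $S_{\beta_k}$, not only near $\Omega$. I would handle this by replacing $h_k$ by a polynomial via Runge on $\overline\Omega$. This is exactly how the proof of Theorem \ref{mainthmG} uses $g$, and the estimate only needs to hold on $\Omega$. Summing over $k$ gives
\[
r_n=\sum_k r^{(k)}_{N_1}+\Big(\sum_k P^{(k)}_{N_2}+Q_{N_2}\Big).
\]
This has the form \eqref{LP_cornerdomain} with $n=\ell N_1+N_2$. By the triangle inequality its error is the maximum in \eqref{eq: corrate0}, since $\ell$ is fixed.

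For \eqref{eq: corrate1}, take the common $\sigma$ from \eqref{eq:clusteropt}. For each $k$, $\sigma\le\sigma_{k,\rm opt}=\pi\sqrt{2(2-\beta_k)}/\sqrt{\alpha_k}$ fails in general, so we use both branches of Theorem \ref{mainthmG}. If $\sigma\le\sigma_{k,\rm opt}$, the rate is $e^{-\sigma\alpha_k\sqrt N}\le e^{-\sigma\alpha\sqrt N}=e^{-\pi\sqrt{2(2-\beta)N\alpha}}$. Otherwise $\eta_k=\sigma_{k,\rm opt}/\sigma<1$ and the rate is $e^{-\pi\eta_k\sqrt{2(2-\beta_k)N\alpha_k}}=e^{-\sigma_{k,\rm opt}^2\alpha_k\sqrt N/\sigma}=e^{-2\pi^2(2-\beta_k)\sqrt N/\sigma}$. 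Since $2-\beta_k\ge2-\beta$, this is at most $e^{-2\pi^2(2-\beta)\sqrt N/\sigma}=e^{-\pi\sqrt{2(2-\beta)N\alpha}}$. In both branches every corner is therefore bounded by the target rate.

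The $\sqrt N$ factor survives only from a logarithmic corner with $\sigma\le\sigma_{k,\rm opt}$ and $\alpha_k=\alpha$. This case bookkeeping between $\alpha'$ and $\alpha''$ gives \eqref{eq: corrate1}, together with the integer remark from \eqref{eq: rateG22}. I expect the main obstacle to be justifying that the corner-by-corner applications are legitimate. The hypotheses of Theorem \ref{mainthmG} are stated on a full sector, while $h_k$ and the singular part are only known on $\Omega$ and the slit neighbourhood. The cleanest route seems to be to check that the proofs in Sections \ref{sec:2}--\ref{sec:4} use $z\in S_\beta$ only through bounds such as \eqref{eq:est}, which hold on any subset. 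Then $S_\beta$ can be replaced by $\Omega$ throughout.
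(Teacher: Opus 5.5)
Your proposal is correct and follows essentially the same route as the paper. Both arguments use Runge for $\sum_k(g_k+\phi_k)$, a per-corner application of Theorems \ref{mainthm}/\ref{mainthm2} multiplied by a Runge polynomial for $h_k$ on $\overline\Omega$ (the paper's ``analogously to Theorem \ref{mainthmG}, under the restriction that $h_k$ is defined on $\Omega$''), summation via the triangle inequality, and the same two-branch comparison $\sigma\alpha_k\ge\sigma\alpha$, respectively $2\pi^2(2-\beta_k)/\sigma\ge 2\pi^2(2-\beta)/\sigma=\sigma\alpha$, for \eqref{eq: corrate1}. The obstacle you flag at the end is already resolved by your own Runge step: $z^{\alpha_k}$ and $z^{\alpha_k}\log z$ are approximated on the full sector $S_{\beta_k}\supseteq\Omega$, and only $h_k$ needs to live on $\overline\Omega$, so there is no need to re-check Sections \ref{sec:2}--\ref{sec:4} with $\Omega$ in place of $S_\beta$.
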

\begin{proof}
%
%
%
{\bf Case (i)}: For $k=1,\ldots,K_1$, the function $(z-w_k)^{\alpha_k}h_k(z)$ with $h_k(z)$ is analytic in a neighborhood of $\Omega$.
By taking $\sigma_{k}=\frac{\sqrt{2(2-\beta_k)}\pi}{\sqrt{\alpha_k}}$, we obtain an LP approximation
\begin{equation*}
r_{N,k}(z)=\sum_{j=1}^{N_{1}}\frac{a_{k,j}}{z-p_{k,j}}+\sum_{j=0}^{N_2} b_{k,j}z^j
\end{equation*}
which satisfies, uniformly for $z\in\Omega$,
\begin{equation}\label{LPwk}
\Big|r_{N,k}(z)-(z-w_k)^{\alpha_k}h_k(z))\Big|=\mathcal{O}\left(e^{-\pi\sqrt{2(2-\beta_k)N\alpha_k}}\right),\quad k=1,2,\ldots,K_1.
\end{equation}
This follows analogously to the proof of Theorem \ref{mainthmG}, under the restriction that $h_k$ is defined on $\Omega$.

{\bf Case (ii)}: For $k=K_1+1,\ldots,\ell$,  we analogously obtain, uniformly for $z\in \Omega$,
\begin{equation*}
\Big|r_{N,k}(z)-(z-w_k)^{\alpha_k}\log(z-w_k)h_k(z)\Big|
=\mathcal{O}\left(\sqrt{N}e^{-\pi\sqrt{2(2-\beta_k)N\alpha_k}}\right).
\end{equation*}
In particular, if $\alpha_k$
is a positive integer, the factor $\sqrt{N}$ in the above estimate  can be omitted.

Moreover, by Runge's approximation theorem, the smooth part $\sum_{k=1}^\ell(g_k(z)+\phi_k(z))$ can be approximated uniformly on $\Omega$ by a polynomial $\mathcal{T}_{N_2}(z)$ of degree $N_2=\mathcal{O}(\sqrt{N_1})$ with root-exponential accuracy
\begin{align}\label{polynomialT2z}
\bigg|\mathcal{T}_{N_2}(z)-\sum_{k=1}^\ell(g_k(z)+\phi_k(z))\bigg|
= \mathcal{O}\big(e^{-\pi\min_{1\le k\le \ell}{\sqrt{2(2-\beta_{k})N\alpha_k}}}\big),\ \ z\in\Omega.
\end{align}
Define the global LP approximant
\begin{equation*}
r_n(z)=\sum_{k=1}^{\ell}\sum_{j=1}^{N_{1}}\frac{a_{k,j}}{z-p_{k,j}}+\sum_{k=1}^{\ell}\sum_{j=0}^{N_2} b_{k,j}z^j+\mathcal{T}_{N_2}(z):=\sum_{k=1}^{\ell}\sum_{j=1}^{N_{1}}\frac{a_{k,j}}{z-p_{k,j}}+\sum_{j=0}^{N_2} b_{j}z^j,
\end{equation*}
where $n=\ell N_1+N_2$ and $N_2 = \mathcal{O}(\sqrt{N_1})=\mathcal{O}(\sqrt{N})$.

By combining the estimates \eqref{LPwk}, \eqref{eq:b72}, and \eqref{polynomialT2z}, we obtain, uniformly on $\Omega$ as $N\rightarrow \infty$,
\begin{equation*}
|r_n(z)-f(z)|=\max\left\{\begin{array}{ll}
\mathcal{O}\left(e^{-\pi\sqrt{2(2-\beta_k)N\alpha_k}}\right),&k=1,\ldots,K_1,\\
\mathcal{O}\left(\sqrt{N}e^{-\pi\sqrt{2(2-\beta_k)N\alpha_k}}\right),&k=K_1+1,\ldots,\ell.
\end{array}\right\}
\end{equation*}
Furthermore, when $\alpha_k$
is a positive integer for $k>K_1$, the factor $\sqrt{N}$ corresponding to $\alpha_k$ in \eqref{eq: corrate0}  can be omitted.

\bigskip
In particular, from Theorem \ref{mainthmG}, the functions $(z-w_k)^{\alpha_k}h_k(z)$ can be approximated by $r_{N,k}(z)$ with the unified  parameter $\sigma=\frac{\pi\sqrt{2(2-\beta)}}{\sqrt{\alpha}}$. The error satisfies
\begin{equation*}
\Big|r_{N,k}(z)-(z-w_k)^{\alpha_k}h_k(z)\Big|=\left\{\begin{array}{ll}
\mathcal{O}\big(e^{-\sigma\alpha_k\sqrt{N}}\big),&\sigma\le \sigma^{(k)}_{\mathrm{opt}},\\
\mathcal{O}\big(e^{-\pi\eta_k\sqrt{2(2-\beta_k)N\alpha_k}}\big),&\sigma> \sigma^{(k)}_{\mathrm{opt}},
\end{array}\right.\quad \eta_k:=\frac{\sigma^{(k)}_{\mathrm{opt}}}{\sigma},
\end{equation*}
and is bounded by
 $\mathcal{O}\big(e^{-\sigma\alpha\sqrt{N}}\big)
=\mathcal{O}\big(e^{-\pi\sqrt{2(2-\beta)N\alpha}}\big)$ for all $k=1,2,\ldots,K_1$. Indeed, if $\sigma\le \sigma_{\rm opt}^{(k)}$ then
$$\sqrt{2(2-\beta)N\alpha}= \sigma\alpha\le \sigma \alpha_k.$$ While for $\sigma> \sigma_{\rm opt}^{(k)}$ we have
$$
\eta_k\sqrt{2(2-\beta_k)N\alpha_k}=\sqrt{\frac{2(2-\beta_k)^2}{2-\beta}N\alpha}\ge \sqrt{2(2-\beta)N\alpha}.$$
Similarly,
\begin{equation*}
\Big|r_{N,k}(z)-(z-w_k)^{\alpha_k}\log(z-w_k)h_k(z)\Big|=\left\{\begin{array}{ll}
\mathcal{O}(\sqrt{N}e^{-\sigma\alpha_k\sqrt{N}}),&\sigma\le \sigma^{(k)}_{\mathrm{opt}},\\
\mathcal{O}(e^{-\pi\eta_k\sqrt{2(2-\beta_k)N\alpha_k}}),&\sigma> \sigma^{(k)}_{\mathrm{opt}},
\end{array}\right.
\end{equation*}
In addition, if $\alpha_k$
is a positive integer for $k>K_1$, the factor $\sqrt{N}$  in the above estimate can be omitted.

In terms of the parameters $\alpha',\,\alpha''$
introduced earlier, this bound can be written as
\begin{align}\label{eq:b72}
\Big|r_{N,k}(z)-(z-w_k)^{\alpha_k}\log(z-w_k)h_k(z)\Big|=\left\{\begin{array}{ll}
\mathcal{O}\big(\sqrt{N}e^{-\sigma\alpha\sqrt{N}}\big),&\alpha'\ge\alpha'',\\
\mathcal{O}\big(e^{-\sigma\alpha\sqrt{N}}\big),&\alpha'<\alpha''.\end{array}\right.
\end{align}
When $\alpha''$
is a positive integer, the factor $\sqrt{N}$ in \eqref{eq:b72} can be omitted.

Together with the polynomial approximation estimate \eqref{polynomialT2z} and the observation that
$$e^{-\pi\min_{1\le k\le \ell}{\sqrt{2(2-\beta_{k})N\alpha_k}}}\le e^{-\sigma\alpha\sqrt{N}},$$
we obtain the convergence rate \eqref{eq: corrate1} stated in Theorem \ref{corner}.
\end{proof}

\begin{example}
Finally, based on the Osgood-Carath\'{e}odory Theorem \cite[p. 346, Theorem 16.3a]{Henrici2}, we construct a conformal mapping from $\Omega$ to the unit disk by solving the Laplace equation
\begin{align}
\left\{\begin{array}{ll}
\Delta u=0,&z\in\Omega,\\
u(z)=-\log|z|,&z\in\partial\Omega.
\end{array}\right.\label{eq:Laplaceu}
\end{align}
This problem exhibits corner singularities on a generalized-convex, leaf-shaped curvilinear polygon domain (see Fig. \ref{Lalace_complexDomain}, middle). For this domain, the parameters $\beta_k$ ($k=1,2,3,4$) are all equal to $\frac{3}{2}$.

The relationship between the corner singularity exponent $\alpha$ and the corner angle $\beta\pi$
for the Laplace equation is well characterized in the literature \cite{Herremans2023, Wasow}. According to Theorem 5 in \cite{Wasow}, the dominant asymptotic behavior near a corner is governed by $\mathcal{O}(z^{\frac{1}{\beta}})$ when $\frac{1}{\beta}$  is non-integer, and by   $\mathcal{O}(z^{\frac{1}{\beta}}\log z)$ when $\frac{1}{\beta}$ is an integer, for $\beta\in (0,2)$. Consequently, we set
$\alpha=\min_{1\le k\le \ell}\alpha_k=\min_{1\le k\le \ell}\frac{1}{\beta_k}=\frac{2}{3}$, and employ the LP approximation $r_n$
of the form \eqref{LP_cornerdomain} for the solution \eqref{eq:Laplaceu}. Here, $r_n$ utilizes the exponentially clustered tapering value
$\sigma_{\rm opt}=\frac{\pi\sqrt{2(2-\beta)}}{\sqrt{\alpha}}$.
  Additionally, following the {\sc Matlab} function \texttt{laplace} \cite{Treweb}, we set $N_2={\rm ceil}\left(1.3\sum_{k=1}^{\ell}\sqrt{N_{1,k}}\right)={\rm ceil}\left(1.3\ell\sqrt{N_{1}}\right)$.

 \bigskip
 The numerical results presented in Fig. \ref{Lalace_complexDomain} illustrate the convergence rate of
 $r_n$ in comparison with that of $\overline{r}_n(z)$,  which is evaluated using the uniformly exponentially clustered alternative
$\sigma=\frac{\pi\sqrt{2-\beta}}{\sqrt{\alpha}}$.

As shown in Fig. \ref{Lalace_complexDomain},  the pointwise errors are plotted for $r_n(z)$ and $\overline{r}_n(z)$
 at their respective maximum allowed degrees. The black and red points represent the errors evaluated on the original and finer test sample points, respectively. Following the methodology outlined in \cite{Gopal2019}, the terminal red square ``{\color{red}$\square$}'' on the convergence curve denotes the boundary error measurement benchmarked against a refined computational grid-specifically, a mesh with double the resolution of that employed in the original least-squares discretization. Significantly, congruence between the red square and terminal error data points serves as a critical validation metric; such alignment demonstrates numerical fidelity through successful convergence-verification and grid-independence attainment. Here, the ``\#poles'' are the number of poles clustered at every vertex and the ``angle on boundary w.r.t. $w_k$'' is measured by the direction angle centered at the black dots in the middle subplots.

\begin{figure}[htbp]
\centerline{\includegraphics[width=14.8cm]{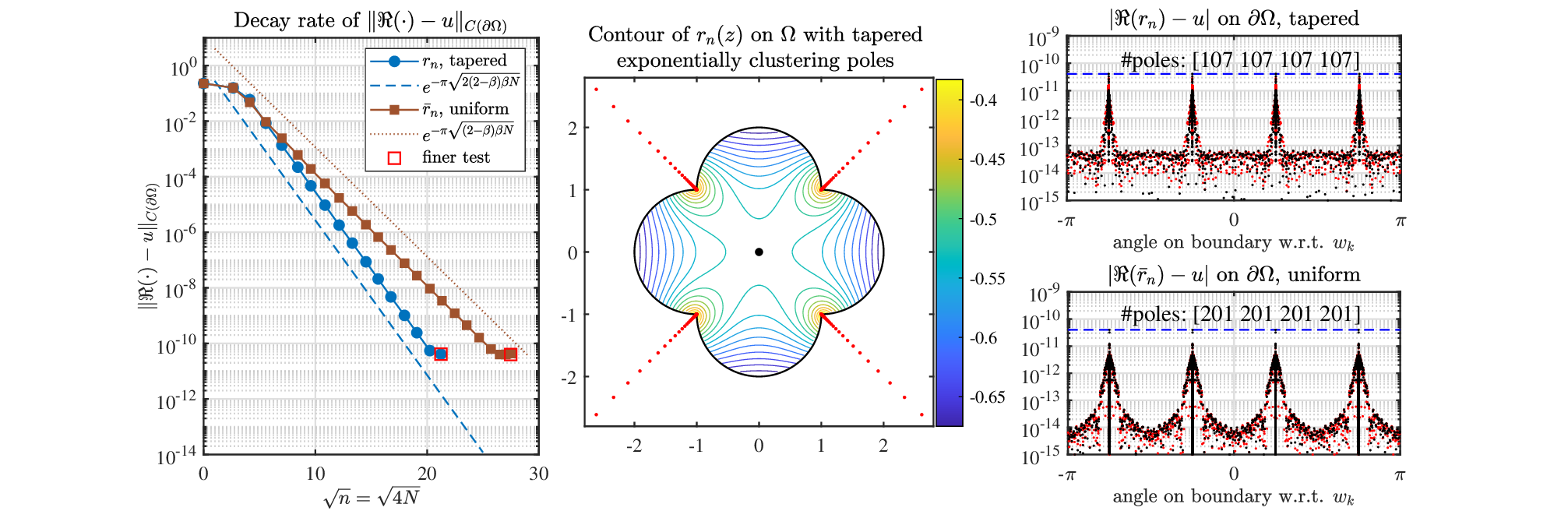}}
\caption{Error decay rates (left), contours with pole distributions (middle), and pointwise errors (right) for the LP solutions $r_n(z)$ and $\overline{r}_n(z)$ to the Laplace equation on a domain $\Omega$. Here, $r_n$ uses the exponentially clustered tapering value $\sigma_{\rm opt}=\frac{\pi\sqrt{2(2-\beta)}}{\sqrt{\alpha}}$, which is compared with $\overline{r}_n(z)$ using the uniformly exponentially clustered alternative $\sigma=\frac{\pi\sqrt{2-\beta}}{\sqrt{\alpha}}$. }\label{Lalace_complexDomain}
\end{figure}
\end{example}

\section{Conclusions}
\label{conclusion}

Building on a rigorous analysis of the root-exponential convergence of lightning plus polynomial approximations for corner singularities, this paper establishes the optimal convergence rate. Specifically, we consider the approximation of functions of the form $g(z)z^\alpha$ or $g(z)z^\alpha\log z$
in a sector-shaped domain with tapered exponentially clustered poles, where $g(z)$ is analytic on the domain and satisfies $g(0)\not=0$. By employing Poisson's summation formula, Runge's approximation theorem, and Cauchy's integral theorem, we confirm Conjecture 3.1 and Conjecture 5.3 in \cite{Herremans2023} regarding the root-exponential convergence rate. Our analysis further reveals that the parameter choice $\sigma_{\rm opt}=\frac{\pi\sqrt{2(2-\beta)}}{\sqrt{\alpha}}$  achieves the fastest convergence rate among all $\sigma>0$.

Furthermore, by leveraging the decomposition framework of Gopal and Trefethen \cite{Gopal2019}, we validate the root-exponential convergence rates for lightning plus polynomial schemes in corner domains $\Omega$ of generalized convexity. The study also explicitly determines the optimal lightning clustering parameter $\sigma$ for
 $\Omega$. The comprehensive analysis presented herein provides a rigorous theoretical framework for lightning-based approximation schemes.

\section*{Acknowledgement}
The first author is grateful to Prof. Nick Trefethen for his encouragement and helpful discussions at ICIAM 2023 Tokyo. 


\phantomsection

\end{document}